\numberwithin{equation}{section}
\theoremstyle{definition}
\newtheorem{thm}{Theorem}[section]
\newtheorem{lem}{Lemma}[section]
\newtheorem{prop}{Proposition}[section]
\theoremstyle{definition}
\newtheorem{defn}{Definition}[section]
\theoremstyle{remark}
\newtheorem{rem}{Remark}[section]
\numberwithin{equation}{section}
\newcounter{saveeqn}
\def\bsi{{\mathrm{i}}}
\newcommand{\ba}{\begin{array}}
	\newcommand{\ea}{\end{array}}
\newcommand{\bea}{\begin{eqnarray*}}
	\newcommand{\eea}{\end{eqnarray*}}
\newcommand{\bean}{\begin{eqnarray}}
	\newcommand{\eean}{\end{eqnarray}}
\newcommand{\Blu}[1]{{\color{black}#1}}
\newcommand{\bu}{\mathbf{u}}
\newcommand{\bx}{\mathbf{x}}
\newcommand{\by}{\mathbf{y}}
\newcommand{\bGa}{\mathbf{\Gamma}}
\newcommand{\bvarphi}{\bm{\varphi}}
\newcommand{\bmf}[1]{{\mathbf{#1}}}
\title[Spectral properties of an acoustic-elastic transmission eigenvalue problem]{Spectral properties of an acoustic-elastic transmission eigenvalue problem with applications}
\author{Huaian Diao}
\address{School of Mathematics, Jilin University, Changchun 130012, China}
\email{hadiao@gmail.com, diao@jlu.edu.cn}
\author{Hongjie Li}
\address{Department of Mathematics, The Chinese University of Hong Kong, Hong Kong SAR, China}
\email{hongjieli@cuhk.edu.hk}
\author{Hongyu Liu}
\address{Department of Mathematics, City University of Hong Kong, Kowloon, Hong Kong SAR, China}
\email{hongyu.liuip@gmail.com, hongyliu@cityu.edu.hk}
\author{Jiexin Tang}
\address{School of Mathematics, Northeast Normal University, Changchun 130024, China}
\email{jiexintang@foxmail.com}
\date{} 
\begin{document}
	
	\begin{abstract}
		
		We are concerned with a coupled-physics spectral problem arising in the coupled propagation of acoustic and elastic waves, which is referred to as the acoustic-elastic transmission eigenvalue problem. There are two major contributions in this work which are new to the literature. First, \Blu{under a mild condition on the medium parameters, we prove the existence of an acoustic-elastic transmission eigenvalue.} Second, we establish a geometric rigidity result of the transmission eigenfunctions by showing that they tend to localize on the boundary of the underlying domain. Moreover, we also consider interesting implications of the obtained results to the effective construction of metamaterials by using bubbly elastic structures and to the inverse problem associated with the fluid-structure interaction.

		\medskip

		\noindent{\bf Keywords:}~~acoustic-elastic; coupled-physics; transmission eigenvalues; transmission eigenfunctions; spectral geometry; boundary localization; bubbly elastic medium; fluid-structure interaction

		\noindent{\bf 2010 Mathematics Subject Classification:}~~ 35C20, 35M10, 35M30, 35P25
		
	\end{abstract}
	
	\maketitle
	
	\section{Introduction}
	
	\subsection{Mathematical setup and summary of major findings}
	
	Initially focusing on the mathematics, but not the physics, we present the mathematical setup of our study.  Let $ \Omega $ be a  simply connected domain with $\partial\Omega\in C^{1, s}$ for some $0<s<1$ in $ \mathbb{R}^N, N=2,3 $. The complement $  \mathbb{R}^N\backslash\overline\Omega$ is connected. Henceforth, we let $\nu\in\mathbb{S}^{N-1}$ signify the exterior unit normal vector to $\partial\Omega$. Let $ \rho_b, \rho_e \in \mathbb{R}_{+} $ and $ \kappa\in \mathbb{R}_{+}$. Let $\tilde{\lambda}, \tilde{\mu}$ be the Lam\'e  constants
	which satisfy the following strong convexity conditions
	\begin{equation}\label{eq:conv} 
		\tilde{\mu}> 0 \ \ \mbox{and} \ \ N\tilde{\lambda}+2\tilde{\mu}> 0.
	\end{equation}
	Let $\mathbf{u}(\mathbf{x})$, $\mathbf{x}\in\Omega$, be a $\mathbb{C}^N$-valued function.

	Define the Lam\'e operator $\mathcal{L}_{\tilde{\lambda},\tilde{\mu}}$ and the traction operator $T_{\tilde{\nu}}$ respectively as follows:
	\begin{equation}\label{eq:lo1}
		\begin{split}
			\mathcal{L}_{{\tilde{\lambda}}, {\tilde{\mu}}}\mathbf{u}(\mathbf{x}): =& \tilde{\lambda} \Delta \mathbf{u}(\mathbf{x})+(\tilde{\lambda}+\tilde{\mu})\nabla (\nabla  \cdot \mathbf{u}(\mathbf{x}) ),\quad \mathbf{x}\in\Omega, \\
			T_{\tilde\nu} \bmf {u}(\mathbf{x}):=& \tilde{\lambda}(\nabla \cdot \bmf{u}(\mathbf{x}))\nu(\mathbf{x})+2\tilde{\mu} (\nabla^s\bmf{u}(\mathbf{x}))\nu(\mathbf{x}),\quad\mathbf{x}\in\partial\Omega,
		\end{split}
	\end{equation}
	where
	\begin{equation} \label{div u}
		\nabla^s\bmf{u}:=\frac{1}{2}(\nabla\bmf{u}+\nabla{\bmf{u}^\top }). 
	\end{equation}
	Let $\omega\in\mathbb{R}_+$ and $v(\mathbf{x})$, $\mathbf{x}\in\Omega$, be a $\mathbb{C}$-valued function. We are concerned with the following spectral problem for $(\mathbf{u}, v)\in H^1(\Omega)^N\times H^1(\Omega)$: 
	\begin{equation}\label{eq:syst2}
		\begin{cases}
			\mathcal{L}_{{\tilde{\lambda}}, {\tilde{\mu}}}\bmf {u}(\mathbf{x}) + \omega^2\rho_e\bmf {u}(\mathbf{x})= 0 &\mbox{in}\quad \Omega, \smallskip\\
			\displaystyle{\nabla\cdot(\frac{1}{\rho_b}\nabla v(\mathbf{x}))+\frac{\omega^2}{\kappa}v(\mathbf{x})=0 }&  \mbox{in}\quad \Omega ,\smallskip\\
			\displaystyle{\bmf {u}(\mathbf{x})\cdot \nu-\frac{1}{\rho_b\omega^2}\nabla v(\mathbf{x})\cdot \nu =0} & \mbox{on}\quad \partial \Omega,\smallskip\\
			T_{\tilde{\nu}} \bmf {u}+v(\mathbf{x})\nu =0 & \mbox{on}\quad \partial \Omega. 
		\end{cases}
	\end{equation}	
	In the physical setup, the first equation in \eqref{eq:syst2} is known as the Lam\'e equation which describes the propagation of elastic deformation, whereas the second one is the Helmholtz equation which governs the acoustic wave propagation. Hence, \eqref{eq:syst2} is a coupled-physics spectral problem and shall be referred to as the acoustic-elastic transmission eigenvalue problem in what follows.  In \eqref{eq:syst2}, $ \kappa $ and $\rho_b$ describe the bulk modulus and density  of the acoustic medium. The physical parameters $ \tilde{\lambda} $ and $ \tilde{\mu} $  characterize  the compressional modulus and the shear modulus of the elastic material, respectively. Furthermore, $\rho_e$ specifies the density of the elastic medium.

	It is clear that $(\mathbf{u}, v)\equiv (\mathbf{0}, 0)$ is a pair of trivial solutions to \eqref{eq:syst2}. If there exists a nontrivial pair of solutions to \eqref{eq:syst2}, $\omega\in\mathbb{R}_+$ is called an AE (acoustic-elastic) transmission eigenvalue and $(\mathbf{u}, v)$ is the associated pair of transmission eigenfunctions. It is particularly noted that if $v\equiv 0$, one has from \eqref{eq:syst2} that
	\begin{equation}\label{eq:syst2n1}
		\begin{cases}
			\mathcal{L}_{{\tilde{\lambda}}, {\tilde{\mu}}}\bmf {u}(\mathbf{x}) + \omega^2\rho_e\bmf {u}(\mathbf{x})= 0 &\mbox{in}\quad \Omega, \smallskip\\
			\displaystyle{\bmf {u}(\mathbf{x})\cdot \nu=0,\quad T_{\tilde{\nu}} \bmf {u}=0} & \mbox{on}\quad \partial \Omega,
		\end{cases}
	\end{equation}		
	which is the classical Jones eigenvalue problem. The Jones eigenvalue problem arises in studying the fluid-structure interaction \cite{J} and has been extensively studied in the literature \cite{DNS, H, NSS, LM}. It is known that under certain conditions of the medium parameters $\tilde\lambda, \tilde\mu, \rho_e$ as well as the domain $\Omega$, there exist Jones eigenvalues \cite{DNS}. Clearly, Jones eigenvalues to \eqref{eq:syst2} are a special subset of the AE transmission eigenvalues to \eqref{eq:syst2n1}. In this paper, we show that in addition to the Jones eigenvalues, there exist AE transmission eigenvalues to \eqref{eq:syst2} under two generic scenarios. In the first scenario with no geometric restriction on $\Omega$ but a minor condition on the medium parameters, {we show the existence of AE transmission eigenvalues.} We employ the layer potential theory and Gohberg-Sigal theory to establish the aforementioned result. In the second scenario, if $\Omega$ is of a radial shape and no restriction is imposed on the medium parameters, we show the existence of infinitely many AE transmission eigenvalues. The derivation of the second result employs Fourier series expansions and involves highly technical and subtle calculations. The results are contained in Sections~\ref{sect:2} and \ref{sect:3} in what follows. 
	
	In addition to the spectral properties of the transmission eigenvalues, we further show that the transmission eigenfunctions tend to localize on $\partial\Omega$ in the sense that their $L^2$-energies tend to concentrate on $\partial\Omega$. In fact, we rigorously show the existence of a sequence of transmission eigenfunctions $(\mathbf{u}_m, v_m)_{m\in\mathbb{N}}$ associated with $\omega_m\rightarrow\infty$ such that the aforementioned boundary localization pattern occurs. Furthermore, it is intriguing to note that depending on the configuration of the medium parameters, it may happen that both $\mathbf{u}_m$ and $v_m$ are boundary-localized, which are referred to as a pair of bi-localized eigen-modes; and it may also happen that only $v_m$ is boundary-localized whereas $\mathbf{u}_m$ is not, which are referred to as a pair of mono-localized eigen-modes. The results of this part are rigorously proved in the radial case and numerically verified in the non-radial case, which are contained in Section~\ref{sect:3}.

	\subsection{Physical relevance and implications}
	
	We discuss the physical background and motivation of our study and the interesting implications that it may have. We first consider the fluid-structure interaction problem which is described by the following PDE system (cf. \cite{J,DNS,LM}):
	\begin{equation}\label{eq:fs1}
		\begin{cases}
			\mathcal{L}_{{\tilde{\lambda}}, {\tilde{\mu}}}\bmf {u}(\mathbf{x}) + \omega^2\rho_e\bmf {u}(\mathbf{x})= 0 &\mbox{in}\quad \Omega,\smallskip\\
			\displaystyle{\nabla\cdot(\frac{1}{\rho_b}\nabla v^t(\mathbf{x}))+\frac{\omega^2}{\kappa}v^t(\mathbf{x})=0 }&  \mbox{in}\quad \mathbb{R}^N\backslash\overline\Omega ,\smallskip\\
			\displaystyle{\bmf {u}(\mathbf{x})\cdot \nu=\frac{1}{\rho_b\omega^2}\nabla v^t(\mathbf{x})\cdot \nu, \ T_{\tilde{\nu}} \bmf {u}=-v^t(\mathbf{x})\nu} & \mbox{on}\quad \partial \Omega,\smallskip\\
			v^t-v^i\ \ \mbox{satisfies the Sommerfeld radiation condition}, 
		\end{cases}
	\end{equation}
	where $v^i$ is an entire solution to $(\Delta+\mathfrak{m}^2) v^i=0$ in $\mathbb{R}^N$ with $\mathfrak{m}:=\omega/c_b$ and $c_b:=\sqrt{\kappa/\rho_b}$. The last condition in \eqref{eq:fs1} signifies that the scattered field $v^s:=v^t-v^i$ satisfies 
	\begin{equation}\label{eq:radi1}
		\lim_{r\rightarrow\infty} r^{(N-1)/2}(\partial_r v^s-\mathrm{i}\mathfrak{m} v^s)=0, \ \ \ r:=|\mathbf{x}|. 
	\end{equation}
	The well-posedness of the forward problem \eqref{eq:fs1} is known. In the physical setup, $(\Omega; \tilde\lambda, \tilde\mu, \rho_e)$ signifies an elastic body which is embedded in a fluid whose acoustic property is characterized by $\kappa$ and $\rho_b$. $v^i$ denotes an incident acoustic wave with $\omega\in\mathbb{R}_+$ being its angular frequency and its impingement on the elastic body generates the scattering phenomenon where the elastic response inside the elastic body is governed by the first equation in \eqref{eq:fs1} and the acoustic wave propagation outside $\Omega$ is governed by the second equation in \eqref{eq:fs1}. The elastic and acoustic fields are coupled together via the transmission conditions across $\partial\Omega$. An inverse problem of practical importance in sonar technology is to identify the solid structure $\Omega$ by knowledge of the acoustic field $v^s$ away from $\Omega$. It is clear that if $v^s\equiv 0$, i.e. $v^t=v^i$ in $\mathbb{R}^N\backslash\overline{\Omega}$, the solid body is invisible with respect to the acoustic scanning by using $v^i$. In such a case, one can directly verify that $v=v^i|_{\Omega}$ and $\mathbf{u}$ fulfils \eqref{eq:syst2}, namely they form a pair of AE transmission eigenfunctions. That is, if invisibility occurs, the scattering pattern, namely the perturbative wave pattern, is trapped inside the solid body. Hence, in order to understand the invisibility phenomenon, one needs to study the AE transmission eigenvalue problem \eqref{eq:syst2}. In fact, by following a similar argument in \cite{BL1} for the acoustic transmission eigenvalue problem, one can show that if $(v, \mathbf{u})$ is a pair of AE transmission eigenfunctions, then $v$ can be extended by the Herglotz approximation to form an incident field whose impingement on $\Omega$ generates a nearly-vanishing scattered field. Hence, our results on the existence of AE transmission eigenvalues indicates that (near) invisibility is not a sporadic phenomenon. Moreover, our results on the boundary localization properties of the AE transmission eigenfunctions characterize the quantitative behaviours of both the elastic and acoustic fields when (near) invisibility occurs. It is also interesting to note the mono-localized eigen-modes (i.e. $v$ is boundary-localized but $\mathbf{u}$ is not) can be used for the design of one-way information transmission in that the acoustic observable is void outside the solid, but the elastic observable inside the solid is non-void. Finally, we would like to mention that the spectral pattern of boundary localization was recently investigated in \cite{CDHLW21,DJLZ} for the acoustic transmission eigenfunctions and it has been used to produce a super-resolution scheme for acoustic imaging. By following a similar spirit, one can also use the spectral pattern discovered in the current article to develop novel imaging scheme for the fluid-structure interaction problem. We shall consider these and other developments in a forthcoming paper. 
	
	Next, we consider the elastodynamics in bubbly elastic media. For simplicity, we consider the case with a single air bubble $(\Omega; \kappa, \rho_e)$ embedded in an elastic medium $(\mathbb{R}^N\backslash\overline{\Omega}; \tilde\lambda, \tilde\mu, \rho_b)$. The linear elastic deformation is governed by the following PDE system (cf. \cite{LLZ}):
	\begin{equation}\label{eq:fs2}
		\begin{cases}
			\mathcal{L}_{{\tilde{\lambda}}, {\tilde{\mu}}}\bmf {u}^t(\mathbf{x}) + \omega^2\rho_e\bmf {u}^t(\mathbf{x})= 0 &\mbox{in}\quad \mathbb{R}^N\backslash\overline\Omega,\smallskip\\
			\displaystyle{\nabla\cdot(\frac{1}{\rho_b}\nabla v(\mathbf{x}))+\frac{\omega^2}{\kappa}v(\mathbf{x})=0 }&  \mbox{in}\quad \Omega ,\smallskip\\
			\displaystyle{\bmf {u}^t(\mathbf{x})\cdot \nu=\frac{1}{\rho_b\omega^2}\nabla v(\mathbf{x})\cdot \nu, \ T_{\tilde{\nu}} \bmf {u}^t=-v(\mathbf{x})\nu} & \mbox{on}\quad \partial \Omega,\smallskip\\
			\mathbf{u}^t-\mathbf{u}^i\ \ \mbox{satisfies the Kupradze radiation condition}, 
		\end{cases}
	\end{equation}
	where $\mathbf{u}^i$ is an entire solution to $\mathcal{L}_{{\tilde{\lambda}}, {\tilde{\mu}}}\bmf {u}^i + \omega^2\rho_e\bmf {u}^i= 0$ in $\mathbb{R}^N$. The Kupradze radiation condition is given by decomposing the elastic field $\mathbf{u}^s:=\mathbf{u}^t-\mathbf{u}^i$ into its shear and compressional parts and requiring that each part fulfils the Sommerfeld radiation condition \eqref{eq:radi1}; see e.g. \cite{L,LLL} for a more detailed description. Clearly, one can consider the invisibility issue for \eqref{eq:fs2} and the identically vanishing of $\mathbf{u}^s$ leads again to the transmission eigenvalue problem \eqref{eq:syst2} with $\mathbf{u}=\mathbf{u}^i|_{\Omega}$. However, we are more interested in understanding under what conditions such that when $\mathbf{u}^i\equiv 0$, there exists a nontrivial solution to \eqref{eq:fs2}. In fact, a systematic study was provided in \cite{LLZ} and it is shown that if $\rho_e/\rho_b\gg 1$ and $\omega\ll 1$, the aforementioned resonance phenomenon indeed may happen (at least asymptotically). This low-frequency resonance phenomenon is referred to as the Minnaert resonance and forms the fundamental basis for the effective realisation of elastic metamaterials by bubble-elastic structures (cf. \cite{JBV,LLZ,SMG}). Intriguingly, the Minnaert resonant mode possesses the same boundary-localization pattern as the AE transmission eigenfunctions (cf. \cite{DLT}), which defines the polarizability of the nano-bubbles. Hence, we expect that the discovery in the current article may produce interesting applications in effective construction of elastic metamaterials, which is definitely worth our further study. 
	
	\subsection{Comments on our study} 
	
	\Blu{The AE transmission eigenvalue problem \eqref{eq:syst2} was first considered in \cite{KR}, and it is proved that if the AE transmission eigenvalue exist, they form a discrete set and can accumulate only at $\infty$. }To our best knowledge, we proved the first general result in the literature that under generic scenarios, {there indeed exist AE transmission eigenvalues with a mil dcondition on the medium parameters}. The boundary-localization of transmission eigenfunctions was first discovered in \cite{CDHLW21} for the acoustic transmission eigenvalue problem. It is further extended to the Maxwell system for electromagnetic transmission eigenfunctions in \cite{DLWW}, and to the Lam\'e system for elastic transmission eigenfunctions \cite{JLZZ}. It seems that the boundary-localization phenomenon seems to be universal for different waves when invisibility/non-scattering occurs. However, we would like to emphasize that the different physics underlying this intriguing phenomenon leads to different technical challenges and mathematical treatments, as well as different applications. Indeed, we note that the AE transmission eigen-system \eqref{eq:syst2} consists of two PDEs with one vector-valued and the other one scalar-valued, and moreover the eigenvalue $\omega$ is also coupled in the boundary transmission conditions, which give rise to significant technical difficulties compared to the studies in \cite{CDHLW21, DLWW}. The boundary-localization properties in \cite{CDHLW21} lead to a super-resolution imaging scheme and generating the so-called pseudo plasmon modes, and in \cite{DLWW} lead to an artificial mirage scheme. As discussed earlier, the boundary-localization properties in the current article may have the potential to be applied to the fluid-structure inverse problems and the effective construction of elastic metamaterials. Finally, we would like to mention that in all of the aforementioned works \cite{CDHLW21,DLWW,JLZZ}, the boundary-localization properties were all rigorously verified for the radial geometry and numerically verified for the general geometry. The only exception is in \cite{CDLS} where the general geometry is considered and the boudnary-localization is rigorously justified for the acoustic transmission eigenfunctions via the theory of pseudo-differential operators and generalised Wely's law. However, due to the technical requirement, \cite{CDLS} actually studies the so-called generalised transmission eigenfunctions and moreover the boundary-localization properties are not so sharp and thorough compared to the relevant results for the radial geometry. Hence, we shall follow a similar spirt in the current study by focusing on treating the boundary-localization for the radial geometry rigorously and the general geometry numerically. We shall consider justifying the general geometry in a forthcoming paper.

	\section{Existence of infinitely many transmission eigenvalues}\label{sect:2}

	In this section, we show that in a certain generic scenario {there exist transmission eigenvalues for the system \eqref{eq:syst2}. }
	
	\subsection{Parameter configuration and nondimensionalization}\label{sub:pa}
	
	To facilitate analyzing the coupled-physics system \eqref{eq:syst2}, we introduce the following nondimensional parameters. Define 
	\begin{equation}\label{eq:node}
		\delta=\rho_b/\rho_e, \quad \tau=\frac{c_b}{\tilde{c_p}}=\frac{\sqrt{\kappa/\rho_b}}{\sqrt{(\tilde{\lambda}+2\tilde{\mu})/\rho_e}}, \quad c_b=\sqrt{\kappa/\rho_b}, \quad \tilde{c_p}=\sqrt{(\tilde{\lambda}+2\tilde{\mu})/\rho_e}. 
	\end{equation}
	Let $ l_{\Omega } $ be the average length of the domain $ \Omega $ and we denote the following parameters by
	\begin{equation}\begin{split}\label{eq:para}
			{\mathbf{x}}^{\prime}=\frac{\mathbf{x}}{l_{\Omega }},\ \ k&= \frac{\omega}{c_b} l_{\Omega },\ \ {\mathbf{u}}^{\prime}=\frac{\mathbf{u}}{l_{\Omega }},\\
			\mu=\frac{\tilde{\mu}}{\tilde{\lambda}+2\tilde{\mu}},\ \  \lambda&=\frac{\tilde{\lambda}}{\tilde{\lambda}+2\tilde{\mu}},\ \ {v}^{\prime}=\frac{v}{\rho_bc_b^2}.
	\end{split}\end{equation}
	We would like to mention that the parameters defined in \eqref{eq:node} and \eqref{eq:para} are all nondimensional.
	Through substituting these parameters into \eqref{eq:syst2} and dropping their primes, we obtain the following nondimensional coupled PDE system (cf. \cite{LLZ}):
	\begin{equation}\label{eq:syst3}
		\begin{cases}
			\mathcal{L}_{\lambda, \mu}\bmf {u}(\mathbf{x}) + k^2\tau^2\bmf {u}(\mathbf{x})= 0 &\mbox{in}\quad \Omega,\smallskip \\
			\Delta v(\mathbf{x})+k^2v(\mathbf{x})=0 &  \mbox{in}\quad \Omega ,\smallskip\\
			\bmf {u}(\mathbf{x})\cdot \nu-\frac{1}{k^2}\nabla v(\mathbf{x})\cdot \nu =0 & \mbox{on}\quad \partial \Omega,\smallskip\\
			T_{\nu} \bmf {u}+\delta\tau^2v(\mathbf{x})\nu =0 & \mbox{on}\quad \partial \Omega.
		\end{cases}
	\end{equation}
	It is remarked that the system \eqref{eq:syst3} is equivalent to the original system \eqref{eq:syst2}. Indeed, one can obtain the system \eqref{eq:syst2} from \eqref{eq:syst3} by substituting the parameters in  \eqref{eq:node} and \eqref{eq:para} into the system \eqref{eq:syst3}. Consequently, in what follows we focus on studying the system \eqref{eq:syst3} instead of the system \eqref{eq:syst2}.

	\subsection{Layer potentials and integral reformulation}
	
	We shall rely on the layer potential theory to reformulate the transmission eigenvalue problem \eqref{eq:syst3} into an eigenvalue problem associated with a system of integral equations. To that end, we first introduce the layer potential operators for our subsequent use. 
	
	Let $G^k(\bx)$ be the fundamental solution of the operator $\triangle+k^2$(cf. \cite{LL1}), namely
	\begin{equation}\label{eq:fu_he}
		G^{k}(\bx)=
		\left\{
		\begin{array}{ll}
			\displaystyle{ -\frac{\mathrm{i}}{4} H_0^{(1)}(k|\bx|)}, & N=2 ,\medskip \\
			\displaystyle{-\frac{e^{\mathrm{i} k|\bx|}}{4\pi |\bx|}}, & N=3 ,
		\end{array}
		\right.
	\end{equation}
	where $H_0^{(1)}$ is the zeroth-order Hankel function of the first kind. The single layer potential associated with the Helmholtz system is defined by
	\begin{equation}\label{eq:s_h}
		S_{\partial \Omega}^{k}[\varphi](\bx)=\int_{\partial \Omega}G^k(\bx-\by)\varphi(\by)ds(\by) \quad \bx\in\mathbb{R}^N, 
	\end{equation}
	with $\varphi(\bx)\in H^{-1/2}(\partial \Omega)$. Then the conormal derivative of the single layer potential enjoys the jump formula
	\begin{equation}\label{eq:ju_he}
		\nabla S_{\partial \Omega}^{k}[\varphi] \cdot \nu|_{\pm}(\bx)=\left(\pm\frac{1}{2}I +K_{\partial \Omega}^{k,*} \right)[\varphi](\bx) \quad \bx\in\partial \Omega,
	\end{equation}
	where $I$ is an identity operator and
	\[
	K_{\partial \Omega}^{k,*}[\varphi](\bx)= \int_{\partial \Omega} \nabla_{\bx} G^k(\bx-\by)\cdot \nu_{\bx}\varphi(\by)ds(\by) \quad \bx\in\partial \Omega,
	\]
	which is also known as the Neumann-Poincar\'e (N-P) operator associated with Helmholtz system. Here and also in what follows, the subscript $\pm$ indicates the limits from outside and inside $\Omega$, respectively. 
	
	For the Lam\'e system, the fundamental solution $\bGa^{k}=(\Gamma^{k}_{i,j})_{i,j=1}^N$ of the operator $\mathcal{L}_{{\lambda},{\mu}}+\rho_ek^2$ can be expressed as (cf.\cite{LLZ}):
	\begin{equation}\label{eq:ef}
		\bGa^{k}=\bGa^{k}_s + \bGa^{k}_p,
	\end{equation}
	where 
	\[
	\bGa^{k}_p=-\frac{1}{\rho_e k^2} \nabla \nabla G^{{k}_p} \quad \mbox{and} \quad  \bGa^{k}_s =\frac{1}{\rho_e k^2}({k}_s^2 \mathbf{I} + \nabla \nabla) G^{{k}_s},
	\]
	with $\mathbf{I}$ denoting the $N\times N$ identity matrix and $G^k$ given in \eqref{eq:fu_he}.
	In the last equation ${k}_s$ and ${k}_p$ are defined by
	\begin{equation}\label{eq:kpks}
		k_p :=\frac{k\sqrt{\rho_e}}{\sqrt{ 2\mu+\lambda }}, \ \ k_s:=\frac{k\sqrt{\rho_e}}{\sqrt{ \mu}}. 
	\end{equation}
	When $k=0$, $\bGa^0(\bx)$ is defined as follows
	\begin{equation}\label{eq:f30}
		\bGa^0(\bx)=-\frac{\gamma_1}{4\pi} \frac{1}{|\bx|}\mathbf{I} -\frac{\gamma_2}{4\pi} \frac{\bx \bx^\top }{|\bx|^3},
	\end{equation}
	and
	\begin{equation}
		\gamma_1=\frac{1}{2}\left( \frac{1}{\mu} + \frac{1}{2\mu+\lambda} \right) \quad \mbox{and} \quad \gamma_2=\frac{1}{2}\left( \frac{1}{\mu} - \frac{1}{2\mu+\lambda}\right).
	\end{equation}
	Then the single layer potential operator associated with the fundamental solution $\bGa^{\omega}$ is defined by 
	\begin{equation}\label{eq:sinl}
		\mathbf{S}_{\partial \Omega}^{\omega}[\bvarphi](\bx)=\int_{\partial \Omega} \bGa^{\omega}(\bx-\by)\bvarphi(\by)ds(\by), \quad \bx\in\mathbb{R}^N,
	\end{equation}
	for $\bvarphi\in H^{-1/2}(\partial \Omega)^N$. On the boundary $\partial \Omega$, the conormal derivative of the single layer potential satisfies the following jump formula
	\begin{equation}\label{eq:jump}
		T_{\nu}  \mathbf{S}_{\partial \Omega}^{\omega}[\bvarphi] |_{\pm}(\bx)=\left( \pm\frac{1}{2}\mathbf{I} + \mathbf{K}_{\partial \Omega}^{\omega,*}  \right)[\bvarphi](\bx) \quad \bx\in\partial \Omega,
	\end{equation}
	where
	\begin{equation}\label{eq:lanp}
		\mathbf{K}_{\partial \Omega}^{\omega,*}[\bvarphi](\bx)= \int_{\partial \Omega} T_{\nu_{\bx}} \bGa^{\omega} (\bx-\by)\bvarphi(\by)ds(\by).
	\end{equation}
	The operator $\mathbf{K}_{\partial \Omega}^{\omega,*}$ is called the (N-P) operator associated with the Lam\'e system. 
	
	With the above preparations, we next derive the integral reformulation of the transmission eigenvalue problem \eqref{eq:syst3}. The main result is stated as follows. 
	
	\begin{thm}\label{thm:integral1}
		Consider the following spectral problem associated with a system of integral operator equations:
		\begin{equation}\label{eq:defA}
			\mathcal{A}(k,\delta) [\Phi](\bx)=0, \quad \bx\in\partial \Omega,
		\end{equation}
		where 
		\begin{equation}\label{eq:int1n1}
			\mathcal{A}(k,\delta):= \left(
			\begin{array}{cc}
				-\frac{I}{2} + K^{k,*}_{\partial \Omega} &  - k^2 \nu\cdot {\mathbf{S}}_{\partial \Omega}^{k\tau}\medskip \\
				\delta \tau^2\nu S^k_{\partial \Omega} & -\frac{\mathbf{I}}{2} + {\mathbf{K}}^{k\tau,*}_{\partial \Omega}\\
			\end{array}
			\right),
			\quad 
			\Phi:= \left(
			\begin{array}{c}
				\varphi_b \\
				\bvarphi_e \\
			\end{array}
			\right),
		\end{equation}
		with the surface densities $\varphi_b\in H^{-1/2}(\partial \Omega), \bvarphi_e\in H^{-1/2}(\partial \Omega)^N$. Then there exists a $k\in\mathbb{R}_+$ such that \eqref{eq:defA} possesses a nontrivial solution $\Phi\in H^{-1/2}(\partial\Omega)\times H^{-1/2}(\partial\Omega)^N$ if and only if $k$ is a transmission eigenvalue to the system \eqref{eq:syst3}.
		
	\end{thm}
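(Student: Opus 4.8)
The plan is to reformulate \eqref{eq:syst3} by seeking its solutions in the form of single-layer potentials, so that the two interior PDEs hold automatically and the two transmission conditions on $\partial\Omega$ translate precisely into the operator equation $\mathcal{A}(k,\delta)[\Phi]=0$. For a density pair $\Phi=(\varphi_b,\bvarphi_e)^\top$ I associate
\[
v:=S_{\partial\Omega}^{k}[\varphi_b]\big|_{\Omega},\qquad \mathbf{u}:=\mathbf{S}_{\partial\Omega}^{k\tau}[\bvarphi_e]\big|_{\Omega},
\]
which lie in $H^1(\Omega)$ and $H^1(\Omega)^N$ and satisfy $\Delta v+k^2v=0$ and $\mathcal{L}_{\lambda,\mu}\mathbf{u}+k^2\tau^2\mathbf{u}=0$ in $\Omega$ by construction. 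The proof then splits into two implications, and the tools I would use throughout are: the continuity of $S^{k}_{\partial\Omega}$ and $\mathbf{S}^{k\tau}_{\partial\Omega}$ across $\partial\Omega$; the jump relations \eqref{eq:ju_he} and \eqref{eq:jump}; the fact that $S^{k}_{\partial\Omega}$ and $\mathbf{S}^{k\tau}_{\partial\Omega}$ are Fredholm of index zero from $H^{-1/2}(\partial\Omega)$ (resp.\ $H^{-1/2}(\partial\Omega)^N$) to $H^{1/2}(\partial\Omega)$ (resp.\ $H^{1/2}(\partial\Omega)^N$); and the uniqueness of the exterior Dirichlet problems for $\Delta+k^2$ (Rellich's lemma, Sommerfeld condition) and for $\mathcal{L}_{\lambda,\mu}+(k\tau)^2$ (Kupradze uniqueness).

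For the \emph{sufficiency} direction I would assume $\Phi\neq 0$ solves $\mathcal{A}(k,\delta)[\Phi]=0$ and take $(\mathbf{u},v)$ as above. Taking interior traces, using continuity of the single-layer potentials together with $\nabla S^{k}_{\partial\Omega}[\varphi_b]\cdot\nu|_{-}=(-\tfrac12 I+K^{k,*}_{\partial\Omega})[\varphi_b]$ and $T_\nu\mathbf{S}^{k\tau}_{\partial\Omega}[\bvarphi_e]|_{-}=(-\tfrac12\mathbf{I}+\mathbf{K}^{k\tau,*}_{\partial\Omega})[\bvarphi_e]$, the first row of $\mathcal{A}(k,\delta)[\Phi]=0$ reduces to $\mathbf{u}\cdot\nu-\tfrac{1}{k^2}\nabla v\cdot\nu=0$ and the second to $T_\nu\mathbf{u}+\delta\tau^2 v\nu=0$ on $\partial\Omega$, so $(\mathbf{u},v)$ satisfies all four relations of \eqref{eq:syst3}. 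To see that the pair is nontrivial, I would argue by contradiction: if $(\mathbf{u},v)\equiv(\mathbf{0},0)$ in $\Omega$, then $S^{k}_{\partial\Omega}[\varphi_b]=0$ on $\partial\Omega$, so the exterior single-layer potential solves the exterior Helmholtz problem with zero Dirichlet data and the Sommerfeld condition and therefore vanishes, whence the normal-derivative jump forces $\varphi_b=0$; likewise $\bvarphi_e=0$, contradicting $\Phi\neq0$. Hence $(\mathbf{u},v)$ is a nontrivial pair of transmission eigenfunctions and $k$ is a transmission eigenvalue.

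For the \emph{necessity} direction, let $k$ be a transmission eigenvalue with eigenfunctions $(\mathbf{u},v)\neq(\mathbf{0},0)$; the task is to realize $v$ and $\mathbf{u}$ as single-layer potentials with the above wave numbers. I would first choose $\varphi_b^0\in H^{-1/2}(\partial\Omega)$ with $S^{k}_{\partial\Omega}[\varphi_b^0]=v$ on $\partial\Omega$: by the Fredholm alternative the range of $S^{k}_{\partial\Omega}$ is the annihilator of $\ker S^{k}_{\partial\Omega}$, which — via exterior uniqueness and the normal-derivative jump — is exactly $\{\partial_\nu w|_{-}:\ w\ \text{a Dirichlet eigenfunction of}\ -\Delta-k^2\ \text{on}\ \Omega\}$, and Green's second identity applied to $v$ and such $w$ gives $\langle v|_{\partial\Omega},\partial_\nu w|_{-}\rangle=0$, so $v|_{\partial\Omega}$ does lie in that range. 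Since $S^{k}_{\partial\Omega}[\varphi_b^0]|_{\Omega}$ and $v$ agree on $\partial\Omega$ and both solve the Helmholtz equation, their difference is a Dirichlet eigenfunction $w^*$; using that $\psi\mapsto S^{k}_{\partial\Omega}[\psi]|_{\Omega}$ maps $\ker S^{k}_{\partial\Omega}$ bijectively onto the Dirichlet eigenspace (via Green's representation, $S^{k}_{\partial\Omega}[\partial_\nu w|_-]|_{\Omega}=w$), I subtract a suitable element of $\ker S^{k}_{\partial\Omega}$ from $\varphi_b^0$ to obtain $\varphi_b$ with $S^{k}_{\partial\Omega}[\varphi_b]=v$ \emph{throughout} $\Omega$ (when $v\equiv 0$, i.e.\ at a Jones eigenvalue \eqref{eq:syst2n1}, take $\varphi_b=0$). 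The same scheme with $\mathbf{S}^{k\tau}_{\partial\Omega}$, Betti's second identity and the traction trace produces $\bvarphi_e$ with $\mathbf{S}^{k\tau}_{\partial\Omega}[\bvarphi_e]=\mathbf{u}$ in $\Omega$. Reading the two transmission conditions of \eqref{eq:syst3} backwards through \eqref{eq:ju_he} and \eqref{eq:jump} then yields $\mathcal{A}(k,\delta)[\Phi]=0$ with $\Phi=(\varphi_b,\bvarphi_e)^\top$, and $\Phi\neq0$ because a single-layer potential vanishing identically in $\Omega$ has vanishing density while $(\mathbf{u},v)\neq(\mathbf{0},0)$.

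The step I expect to be the main obstacle is exactly this representation in the necessity part: when $k^2$ is a Dirichlet eigenvalue of $-\Delta$ on $\Omega$ (or $k\tau$ produces one for $\mathcal{L}_{\lambda,\mu}$) the single-layer operators fail to be invertible, and one must instead run the Fredholm-alternative-plus-correction argument above, which rests on the compatibility identity furnished by Green's/Betti's formula and on the precise description of the kernels of $S^{k}_{\partial\Omega}$ and $\mathbf{S}^{k\tau}_{\partial\Omega}$. Everything else — the interior equations, the conversion of the boundary conditions, and the nontriviality bookkeeping — follows routinely once the jump relations and the exterior uniqueness theorems are available, and since only the Fredholm index and the jump formulas (not the invertibility of the static operators) are used, the argument is uniform in $N=2,3$.
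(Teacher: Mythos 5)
Your proof is correct and follows essentially the same route as the paper: represent $(v,\mathbf{u})$ by single-layer potentials at wavenumbers $k$ and $k\tau$ and translate the two transmission conditions into $\mathcal{A}(k,\delta)[\Phi]=0$ via the jump relations \eqref{eq:ju_he} and \eqref{eq:jump}. In fact you supply two details the paper's proof passes over in silence --- the Fredholm-alternative/Green's-identity argument showing that every eigenfunction pair is representable by single-layer potentials even when $k^2$ (or the corresponding Lam\'e problem at wavenumber $k\tau$) hits an interior Dirichlet eigenvalue, and the exterior-uniqueness argument linking nontriviality of the densities to nontriviality of the fields --- so your write-up is, if anything, more complete than the original.
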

	
	\begin{proof}
		We first prove the sufficiency part. If $k$ is a transmission eigenvalue to the system \eqref{eq:syst3},
		by the layer potential theory presented above, the solution to the system \eqref{eq:syst3} can be written as
		\begin{equation}\label{eq:sol}
			\left\{
			\begin{array}{ll}
				v=S_{\partial \Omega}^{k}[\varphi_b](\bx), & \bx\in \Omega,\medskip \\
				\mathbf{u}=\mathbf{S}_{\partial \Omega}^{k\tau}[\bvarphi_e](\bx), &  \bx\in \Omega,
			\end{array}
			\right.
		\end{equation}
		where density functions $\varphi_b\in H^{-1/2}(\partial \Omega), \bvarphi_e\in H^{-1/2}(\partial \Omega)^N$. Moreover, the density functions $\Phi=(\varphi_b, \varphi_e)$ solve the equation \eqref{eq:defA}.
		Indeed, the equation \eqref{eq:defA} follows from the third condition and the fourth condition in the system \eqref{eq:syst3}, the fact $k\neq 0$ and the jump formulas \eqref{eq:ju_he} as well as \eqref{eq:jump}. Thus there exists a $k\in\mathbb{R}_+$ such that \eqref{eq:defA} possesses a nontrivial solution, which proves the sufficient part. 
		
		Next, we show the necessity part. If there exists a $k\in\mathbb{R}_+$ such that \eqref{eq:defA} possesses a nontrivial solution $\Phi=(\varphi_b, \varphi_e)\in H^{-1/2}(\partial\Omega)\times H^{-1/2}(\partial\Omega)^N$, the functions $(\mathbf{u}, v)$ represented in \eqref{eq:sol} will solve the system \eqref{eq:syst3}. Indeed, the functions $(\mathbf{u}, v)$ represented in \eqref{eq:sol} satisfy the first and the second condition in \eqref{eq:syst3} by the potential theory. The third and the fourth equations follow from $\Phi$ solving the equation \eqref{eq:defA}. Thus all the equations in \eqref{eq:syst3} are fulfilled and this completes the proof.

	\end{proof}

	\subsection{Existence of AE transmission eigenvalues}
	
	In this subsection, {we show that there exist AE transmission eigenvalues to the system \eqref{eq:syst3}. }
	To that end, we firstly present some preliminaries for the subsequent analysis.  
	
	\begin{lem}\label{lem:ash}
		The operators $S_{\partial \Omega}^{k}: H^{-1/2}(\partial \Omega) \rightarrow H^{1/2}(\partial \Omega)$ and $K_{\partial \Omega}^{k,*}: H^{-1/2}(\partial \Omega)\rightarrow H^{-1/2}(\partial \Omega)$ defined in \eqref{eq:s_h} and \eqref{eq:ju_he} enjoy the following asymptotic expansion for $k\ll 1$ :
		\begin{equation}\label{eq:hsa}
			S_{\partial \Omega}^{k} = S_{\partial \Omega}^{0}  + k P , \qquad 
			K_{\partial \Omega}^{k,*}  =  K_{\partial \Omega}^{0,*}   + k^2 Q,
		\end{equation}
		where $S_{\partial \Omega}^{0}$ and $K_{\partial \Omega}^{0,*}$ are defined in \eqref{eq:s_h} and \eqref{eq:ju_he} with $k=0$, and the operators $P \in \mathcal{L}(H^{-1/2}(\partial \Omega), H^{1/2}(\partial \Omega))$, $Q\in\mathcal{L}(H^{-1/2}(\partial \Omega))$.
	\end{lem}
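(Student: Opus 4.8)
The plan is to expand the fundamental solution $G^k$ in powers of $k$ and then track how the resulting expansions propagate through the definitions of the boundary integral operators $S_{\partial\Omega}^k$ and $K_{\partial\Omega}^{k,*}$. First I would recall the small-argument expansions of the Hankel function $H_0^{(1)}$ in $N=2$ and of $e^{\mathrm{i}k|\bx|}/|\bx|$ in $N=3$. In three dimensions one has $G^k(\bx) = -\frac{1}{4\pi|\bx|}\bigl(1 + \mathrm{i}k|\bx| - \tfrac{k^2}{2}|\bx|^2 + \cdots\bigr)$, so that $G^k(\bx) = G^0(\bx) + \frac{\mathrm{i}k}{4\pi} + k^2 g(\bx) + \cdots$ where $g$ is a smooth (real-analytic) kernel in $|\bx|$; the crucial point is that the $O(k)$ term is a \emph{constant} kernel. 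In two dimensions the expansion of $H_0^{(1)}(z)$ involves a $\log z$ term, giving $G^k(\bx) = G^0(\bx) + \eta_k + k^2 g(\bx)+\cdots$ where $\eta_k$ again collects $k$-dependent but $\bx$-independent contributions (including a $k^2\log k$ piece, which is still $o(k)$ as far as the expansion $S^0+kP$ is concerned, and can be absorbed into the remainder after writing $S^k_{\partial\Omega}=S^0_{\partial\Omega}+kP$ with $P$ carrying the remaining $k$-dependence, as is standard in Ammari–Kang-type expansions).

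Next I would feed these kernel expansions into \eqref{eq:s_h}. Since $S_{\partial\Omega}^k[\varphi](\bx)=\int_{\partial\Omega}G^k(\bx-\by)\varphi(\by)\,ds(\by)$, the expansion of the kernel gives $S_{\partial\Omega}^k = S_{\partial\Omega}^0 + k P$, where $P$ is the operator whose kernel is $k^{-1}\bigl(G^k(\bx-\by)-G^0(\bx-\by)\bigr)$; one checks that this kernel, together with all its $k$-derivatives, is uniformly bounded and sufficiently smooth on $\partial\Omega\times\partial\Omega$ for small $k$, so $P$ maps $H^{-1/2}(\partial\Omega)$ boundedly into $H^{1/2}(\partial\Omega)$ — in fact into $C^\infty$, since after subtracting the $G^0$ singularity the remaining kernel is smooth. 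The mapping property of $S_{\partial\Omega}^0$ itself is the classical one. For the second expansion I would differentiate: $K_{\partial\Omega}^{k,*}$ has kernel $\nabla_\bx G^k(\bx-\by)\cdot\nu_\bx$. Because the $O(k)$ term of $G^k$ is constant in $\bx$, its gradient vanishes, so $\nabla_\bx G^k(\bx-\by) = \nabla_\bx G^0(\bx-\by) + k^2\nabla_\bx g(\bx-\by)+\cdots$; hence $K_{\partial\Omega}^{k,*}=K_{\partial\Omega}^{0,*}+k^2 Q$, with $Q$ the operator whose kernel is $k^{-2}\bigl(\nabla_\bx G^k-\nabla_\bx G^0\bigr)\cdot\nu_\bx$, which is again a smooth kernel on $\partial\Omega\times\partial\Omega$ (the gradient of the smooth part), yielding $Q\in\mathcal{L}(H^{-1/2}(\partial\Omega))$; one may also invoke that $Q$ maps into $H^{1/2}$ and use the compact embedding to emphasize compactness, though only boundedness is asserted.

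The main obstacle I expect is the bookkeeping in the two-dimensional case, where the logarithmic factor in $H_0^{(1)}$ means that the "$k$-coefficient" $P$ is not literally $k$-independent but carries a residual $\log k$ dependence. The resolution, following the standard treatment (e.g. Ammari–Kang), is to absorb those logarithmic terms into $P$ and note that they still give a bounded operator uniformly for $k$ in a small punctured neighbourhood of $0$; the precise statement \eqref{eq:hsa} should be read as an identity with operator-valued coefficients that are bounded (not necessarily analytic) in $k$ near $0$. A secondary technical point is verifying the claimed mapping properties — that subtracting the static kernel $G^0$ genuinely removes the singularity so that $P$ and $Q$ have smooth kernels — which follows because $G^k-G^0$ and $\nabla(G^k-G^0)$ extend across $\bx=\by$ as real-analytic functions of $|\bx-\by|^2$. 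Once these routine estimates are in place, \eqref{eq:hsa} follows directly, together with the stated functional-analytic properties of $P$ and $Q$.
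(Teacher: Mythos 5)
Your proposal is correct and follows essentially the same route as the paper: Taylor-expand the fundamental solution $G^k$ in $k$, observe that the $O(k)$ term is a constant kernel (so it survives in $S_{\partial\Omega}^k$ but its $\bx$-gradient vanishes, making the N-P expansion start at $k^2$), and absorb the $k$-dependent higher-order terms into bounded operators $P$ and $Q$. The paper likewise writes out only the three-dimensional series and relegates the two-dimensional (logarithmic) case to a remark, so your extra discussion of the $\log k$ bookkeeping is a welcome but inessential addition.
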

	\begin{proof} 
		For $k\ll 1$, the three dimensional fundamental solution given in \eqref{eq:fu_he} satisfies the following asymptotic expansion
		\begin{equation}\label{eq:asfh}
			\displaystyle{-\frac{e^{\mathrm{i} k|\bx|}}{4\pi |\bx|}}=\sum_{j=1}^{\infty} -\frac{\bsi}{4\pi} \frac{k^j(\bsi|\bx|)^{j-1}}{j!}.
		\end{equation}
		Thus, from the definition of the operator $S_{\partial \Omega}^{k}$ in \eqref{eq:s_h}, one has that 
		\[
		S_{\partial \Omega}^{k} = S_{\partial \Omega}^{0}  + k P,
		\]
		where the operator $P$ is given by
		\[
		P[\varphi](\bx) = -\frac{\bsi}{4\pi}\sum_{j=1}^{\infty} \int_{\partial D}\frac{( \bsi k |\bx-\by|)^{j-1}}{j!}\varphi(\by)ds(\by),
		\]
		and is a bounded operator from $H^{-1/2}(\partial \Omega)$ to $H^{1/2}(\partial \Omega)$ (cf. \cite{LLL}).
		
		For the N-P operator $K_{\partial \Omega}^{k,*}$, one has that  
		\[
		K_{\partial \Omega}^{k,*}  =  K_{\partial \Omega}^{0,*}   + k^2 Q,
		\]
		where the operator $Q$ is written by
		\[
		Q[\varphi](\bx) =\sum_{j=2}^{\infty} -\frac{(\bsi k)^j(j-1)}{4\pi j!}\int_{\partial D} |\bx-\by|^{j-3}(\bx-\by)\cdot\nu_\bx \varphi(\by) ds(\by),
		\]
		and is a bounded operator from $H^{-1/2}(\partial \Omega)$ to $H^{-1/2}(\partial \Omega)$ (cf. \cite{LLZ}). This completes the proof.
		
	\end{proof}
	
	\begin{rem}
		We only give the proof for the three dimensional case in Lemma \ref{lem:ash}. The two dimensional situation can be proved similarly. The same principle is also applied to Lemmas \ref{lem:asl} and \ref{lem:kerl}.
	\end{rem}

	\begin{lem}\label{lem:asl}
		For the operators $\mathbf{S}_{\partial \Omega}^{k}: H^{-1/2}(\partial \Omega)^N \rightarrow H^{1/2}(\partial \Omega)^N$ and $\mathbf{K}_{\partial \Omega}^{k,*}: H^{-1/2}(\partial \Omega)^N \rightarrow H^{-1/2}(\partial \Omega)^N$ defined in \eqref{eq:sinl} and \eqref{eq:lanp}, respectively, the following asymptotic expansions hold: 
		\begin{equation}\label{eq:hsa}
			\mathbf{S}_{\partial \Omega}^{k} = \mathbf{S}_{\partial \Omega}^{0} + k \mathbf{P} , \qquad 
			\mathbf{K}_{\partial \Omega}^{k,*}  = \mathbf{K}_{\partial \Omega}^{0,*} + k^2 \mathbf{Q},
		\end{equation}
		where $\mathbf{S}_{\partial \Omega}^{0}$ and $\mathbf{K}_{\partial \Omega}^{0,*}$ are defined in \eqref{eq:sinl} and \eqref{eq:lanp} with $k=0$, and the operator $\mathbf{P} \in \mathcal{L}\left(H^{-1/2}(\partial \Omega)^N, H^{1/2}(\partial \Omega)^N \right)$ and $\mathbf{Q}\in\mathcal{L}\left(H^{-1/2}(\partial \Omega)^N\right)$.
	\end{lem}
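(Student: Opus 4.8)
The plan is to follow the proof of Lemma~\ref{lem:ash}, now exploiting the explicit structure \eqref{eq:ef} and \eqref{eq:kpks} of the Lam\'e fundamental solution. First I would substitute the scalar power series $G^k(\bx)=\sum_{j\ge 0}k^{j}g_j(\bx)$, $g_j(\bx)=-\frac{\bsi^{j}|\bx|^{j-1}}{4\pi\, j!}$ (for $N=3$; the $N=2$ case differs by a logarithmic term and is treated as in Lemma~\ref{lem:ash}), into $\bGa^k_p=-\frac{1}{\rho_e k^2}\nabla\nabla G^{k_p}$ and $\bGa^k_s=\frac{1}{\rho_e k^2}(k_s^2\mathbf I+\nabla\nabla)G^{k_s}$, recalling from \eqref{eq:kpks} that $k_p$ and $k_s$ are fixed constants times $k$. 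Two elementary facts drive the whole scheme: $g_1$ is a constant, so $\nabla\nabla g_1=0$ and no $k^{-1}$ term survives; and the two $k^{-2}$ contributions $\pm\frac{1}{\rho_e k^2}\nabla\nabla g_0$ coming respectively from $\bGa^k_s$ and $\bGa^k_p$ cancel exactly. Hence $\bGa^k$ is analytic in $k$ near $0$, $\bGa^k=\bGa^0+k\,\bGa^{(1)}+k^2\bGa^{(2)}+\cdots$, the leading term being \eqref{eq:f30}, and the series of kernels $\bGa^{(j)}(\bx-\by)$, $j\ge 1$, has only weakly singular (indeed, for $j\ge1$ continuous) structure controlled uniformly for $k\ll1$ by the standard layer-potential estimates of \cite{LLL,LLZ}, with convergence guaranteed by the $1/j!$ decay.

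Integrating this expansion against a density $\bvarphi\in H^{-1/2}(\partial\Omega)^N$ and factoring out one power of $k$ immediately gives $\mathbf S_{\partial\Omega}^k=\mathbf S_{\partial\Omega}^0+k\mathbf P$, where the ($k$-dependent, just as in Lemma~\ref{lem:ash}) operator $\mathbf P$ has kernel $\bGa^{(1)}(\bx-\by)+k\,\bGa^{(2)}(\bx-\by)+\cdots$ and is bounded $H^{-1/2}(\partial\Omega)^N\to H^{1/2}(\partial\Omega)^N$ uniformly for small $k$. For the Neumann--Poincar\'e operator I would apply $T_{\nu_\bx}$ column-wise to the same expansion of $\bGa^k(\bx-\by)$. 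The key point is that the first-order coefficient $\bGa^{(1)}$ is a \emph{constant} matrix: for $N=3$ the only two sources of a $k^1$ term are $g_1\mathbf I$ (from $k_s^2\mathbf I\,G^{k_s}$) and $\nabla\nabla g_3$ (from $\nabla\nabla G^{k_s}$ and $\nabla\nabla G^{k_p}$), and $\nabla\nabla g_3$ is a constant matrix since $\nabla\nabla|\bx|^2=2\mathbf I$. Because $T_\nu$ annihilates any displacement whose columns are constant---a rigid translation carries no traction---one gets $T_{\nu_\bx}\bGa^{(1)}\equiv 0$, the exact elastic counterpart of the identity $\nabla g_1\cdot\nu=0$ used in Lemma~\ref{lem:ash}. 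Thus the linear term drops out of the kernel of $\mathbf K_{\partial\Omega}^{k,*}$ and one is left with $\mathbf K_{\partial\Omega}^{k,*}=\mathbf K_{\partial\Omega}^{0,*}+k^2\mathbf Q$, where $\mathbf Q$ is assembled from the $T_{\nu_\bx}\bGa^{(j)}$, $j\ge2$, and is bounded on $H^{-1/2}(\partial\Omega)^N$ uniformly for $k\ll1$.

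The step I expect to require the most care is not any single identity but the bookkeeping that simultaneously tracks (i) the cancellation of the singular $k^{-2}$ and $k^{-1}$ terms in $\bGa^k$, (ii) the fact that $\bGa^{(1)}$ is position-independent so that the Neumann--Poincar\'e expansion skips the linear order, and (iii) the \emph{uniform}-in-$k$ mapping properties of the operator-valued tails, which rest on convergence of the kernel series in the relevant operator norm; for the last of these I would lean on the weakly-singular/smooth-kernel estimates already established for the static and dynamic elastic layer operators in \cite{LLL,LLZ}. As recorded in the remark following Lemma~\ref{lem:ash}, the $N=2$ case introduces a logarithmic term in $G^k$, hence in $\bGa^0$ and $\mathbf P$, but does not change the orders appearing in either expansion and is handled by the same reasoning.
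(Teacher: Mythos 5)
Your proposal is correct and follows essentially the same route as the paper: expand $\bGa^k$ in powers of $k$ (the paper quotes the explicit coefficients $\bGa_j$ directly, with $\bGa_1$ a constant multiple of $\mathbf I$ so that $T_{\nu}\bGa_1=0$ and the Neumann--Poincar\'e expansion starts at order $k^2$), then read off $\mathbf P$ and $\mathbf Q$ and invoke known mapping properties for the operator-valued tails. Your additional bookkeeping on the cancellation of the $k^{-2}$ and $k^{-1}$ singular terms is detail the paper leaves implicit, but it does not change the argument.
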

	
	\begin{proof}
		From the expression of the fundamental solution $\bGa^{\omega}$ in \eqref{eq:ef} and the asymptotic expansion in \eqref{eq:asfh}, one has that 
		\[
		\bGa^{k}(\bx)  = \sum_{j=0}^{\infty} k^j \bGa_j (\bx),
		\]
		where 
		\[
		\begin{split}
			\mathbf{\Gamma}_j(\bx) = & -\frac{1}{4\pi} \frac{\mathrm{i}^j}{(j+2)j!} \left( (j+1)\left( \frac{\rho_e}{\lambda + 2\mu} \right)^{\frac{j+2}{2}}  + \left( \frac{\rho_e}{ \mu} \right)^{\frac{j+2}{2}}  \right)  |\bx|^{j-1}\mathbf{I} \\
			& +\frac{1}{4\pi} \frac{\mathrm{i}^j(j-1)}{(j+2)j!} \left( \left( \frac{\rho_e}{\lambda + 2\mu} \right)^{\frac{j+2}{2}}  - \left( \frac{\rho_e}{ \mu} \right)^{\frac{j+2}{2}}  \right) |\bx|^{j-3}\bx\bx^T.
		\end{split}
		\]
		Thus the single layer potential operator $ \mathbf{S}_{\partial \Omega}^{k}$ can be written as 
		\[
		\mathbf{S}_{\partial \Omega}^{k} = \mathbf{S}_{\partial \Omega}^{0} + k \mathbf{P},
		\]
		where the operator $\mathbf{P}$ is given by 
		\[
		\mathbf{P}[\bvarphi](\bx)=\sum_{j=1}^{\infty}k^{j-1}\int_{\partial \Omega} \bGa_j(\bx-\by)\bvarphi(\by)ds(\by),
		\]
		and is bounded from $H^{-1/2}(\partial \Omega)^3$ to $H^{1/2}(\partial \Omega)^3$  (cf. \cite{DLL1}).
		
		For the N-P operator $ \mathbf{K}_{\partial \Omega}^{k,*}$, one has that  
		\[
		\mathbf{K}_{\partial \Omega}^{k,*}  = \mathbf{K}_{\partial \Omega}^{0,*} + k^2 \mathbf{Q}
		\]
		where the operator $\mathbf{Q}$ is written by
		\[
		\mathbf{Q}[\bvarphi](\bx) =\sum_{j=2}^{\infty} k^{j-2} \int_{\partial \Omega} T_{\nu_{\bx}} \bGa_j (\bx-\by)\bvarphi(\by)ds(\by),
		\]
		and is bounded operator from $H^{-1/2}(\partial \Omega)^3$ to $H^{-1/2}(\partial \Omega)^3$ (cf. \cite{DLL1}). This completes the proof.

	\end{proof}

	\begin{lem}\label{lem:kerh}
		The dimension of the kernel of the operator $-1/2 + K_{\partial \Omega}^{0,*}$ is $1$.
	\end{lem}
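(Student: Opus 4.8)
The plan is to read the kernel condition off the jump formula \eqref{eq:ju_he} at $k=0$ and to identify the kernel with the line spanned by the equilibrium (capacitary) density of $\partial\Omega$. If $\varphi\in H^{-1/2}(\partial\Omega)$ satisfies $\big(-\tfrac12 I+K_{\partial\Omega}^{0,*}\big)[\varphi]=0$, put $u:=S_{\partial\Omega}^{0}[\varphi]$; this is harmonic in $\Omega$ and in $\mathbb{R}^N\backslash\overline\Omega$, continuous across $\partial\Omega$, and (in $N=3$) decays at infinity. The interior trace in \eqref{eq:ju_he} gives $\partial_\nu u|_-=0$, so by Green's identity in the connected set $\Omega$ one gets $\int_\Omega|\nabla u|^2=0$, whence $u\equiv c$ in $\Omega$ for a constant $c$, and $u|_{\partial\Omega}=c$ by continuity of the single layer potential. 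The first half of the proof is then to bound $\dim\ker\leq 1$ by showing that $u$, and hence $\varphi=\partial_\nu u|_+-\partial_\nu u|_-=\partial_\nu u|_+$, is determined by the single number $c$; the second half is to produce one nonzero kernel element.

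For $\dim\ker\leq1$ in $N=3$: $u$ solves the exterior Dirichlet problem with constant data $c$ and decay at infinity, so $u=c\,U$ in $\mathbb{R}^3\backslash\overline\Omega$, where $U$ is the capacitary potential ($\Delta U=0$ outside $\overline\Omega$, $U=1$ on $\partial\Omega$, $U\to0$ at infinity); hence $\varphi=c\,\partial_\nu U|_+$ lies in $\mathrm{span}\{\psi_0\}$ with $\psi_0:=\partial_\nu U|_+$. One then checks $\psi_0\neq0$ — otherwise $U$ would have vanishing flux through $\partial\Omega$, contradicting $\int_{\partial\Omega}\partial_\nu U|_+\,ds=-\int_{\mathbb{R}^3\backslash\overline\Omega}|\nabla U|^2\,d\mathbf{x}<0$ for the non-constant $U$ — and that $\psi_0$ genuinely lies in the kernel, e.g.\ via the Green representation showing $S_{\partial\Omega}^{0}[\psi_0]$ equals $1$ in $\Omega$ and $U$ outside so that $\partial_\nu S_{\partial\Omega}^{0}[\psi_0]|_-=0$. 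This yields $\dim\ker\big(-\tfrac12 I+K_{\partial\Omega}^{0,*}\big)=1$ in three dimensions.

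In $N=2$ the single–layer computation is complicated by the logarithmic term $S_{\partial\Omega}^{0}[\varphi](\mathbf{x})=\tfrac{1}{2\pi}(\log|\mathbf{x}|)\int_{\partial\Omega}\varphi\,ds+O(|\mathbf{x}|^{-1})$, and this is the step I expect to be the main obstacle. The cleanest remedy, which also unifies the two dimensions, is to pass to the $L^2$–adjoint: for $\partial\Omega\in C^{1,s}$ the operator $K_{\partial\Omega}^{0,*}$ is compact on $H^{-1/2}(\partial\Omega)$, so $-\tfrac12 I+K_{\partial\Omega}^{0,*}$ is Fredholm of index $0$ and $\dim\ker\big(-\tfrac12 I+K_{\partial\Omega}^{0,*}\big)=\dim\ker\big(-\tfrac12 I+K_{\partial\Omega}^{0}\big)$, where $K_{\partial\Omega}^{0}$ is the double–layer Neumann–Poincar\'e operator. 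For the latter one runs the symmetric argument using the double layer potential $D_{\partial\Omega}^{0}[f]$, which decays at infinity in every dimension: by the jump relation for the double layer, $f\in\ker\big(-\tfrac12 I+K_{\partial\Omega}^{0}\big)$ forces $D_{\partial\Omega}^{0}[f]\equiv0$ in $\mathbb{R}^N\backslash\overline\Omega$; continuity of the conormal derivative of the double layer then gives $\partial_\nu D_{\partial\Omega}^{0}[f]|_-=0$, so $D_{\partial\Omega}^{0}[f]\equiv c'$ in $\Omega$, and finally $f=D_{\partial\Omega}^{0}[f]|_--D_{\partial\Omega}^{0}[f]|_+=c'$ is constant; conversely the constants lie in this kernel since $D_{\partial\Omega}^{0}[1]\equiv0$ outside $\overline\Omega$. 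Thus $\ker\big(-\tfrac12 I+K_{\partial\Omega}^{0}\big)$ is exactly the line of constants, the lemma follows for $N=2,3$, and everything except the non-degeneracy (producing a nonzero element, and taming the $N=2$ logarithm) is a routine consequence of uniqueness for the interior Neumann and exterior Dirichlet problems — this being precisely the well–known fact that $\tfrac12$ is a simple eigenvalue of the Neumann–Poincar\'e operator on a connected boundary.
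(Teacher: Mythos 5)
Your proof is correct, and its first half is essentially the paper's argument: represent $u=S^0_{\partial\Omega}[\varphi]$, read $\partial_\nu u|_-=0$ off the jump formula \eqref{eq:ju_he}, and reduce the kernel to the solution space of the homogeneous interior Neumann problem, which consists of the constants. Where you diverge is in the converse direction (the lower bound $\dim\ker\geq 1$) and in the treatment of $N=2$. The paper gets the converse by asserting that $S^0_{\partial\Omega}:H^{-1/2}(\partial\Omega)\to H^{1/2}(\partial\Omega)$ is invertible, so that the constant solution of \eqref{eq:nbp} can be written as a single layer potential whose density then lies in the kernel; you instead exhibit the kernel element explicitly as the equilibrium density $\psi_0=\partial_\nu U|_+$ of the capacitary potential and verify $\psi_0\neq 0$ by an energy identity. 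More substantively, you flag that the invertibility of $S^0_{\partial\Omega}$ (and the decay of $u$ at infinity) is problematic in $N=2$ because of the logarithmic term, and you route around it by passing to the $L^2$-adjoint $-\tfrac12 I+K^0_{\partial\Omega}$ and running the symmetric double-layer argument, whose potential decays in every dimension; this correctly recovers the classical fact that $\tfrac12$ is a simple eigenvalue of the Neumann--Poincar\'e operator on a connected boundary. The paper's proof is shorter but silently uses the invertibility of $S^0_{\partial\Omega}$, which can fail in two dimensions for boundaries of unit logarithmic capacity, so your adjoint detour actually covers a case the paper's argument leaves exposed; both proofs agree on, and are complete for, $N=3$.
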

	\begin{proof}
		Assume that $\varphi$ is the kernel of the operator $-1/2 + K_{\partial \Omega}^{0,*}$. Thus the function $u(\bx)$ expressed by 
		\begin{equation}\label{eq:sv}
			v(\bx) = S_{\partial \Omega}^{0}[\varphi], \quad \bx\in\Omega,
		\end{equation}
		solves the following Neumann boundary value problem
		\begin{equation}\label{eq:nbp}
			\begin{cases}
				\Delta v(\mathbf{x})=0 &  \mbox{in}\quad \Omega ,\\
				\nabla v(\mathbf{x})\cdot \nu =0 & \mbox{on}\quad \partial \Omega.
			\end{cases}
		\end{equation}
		The dimension of the nontrivial solution to the system \eqref{eq:nbp} is $1$. Indeed, by the variational principle, the nontrivial solution to the equation \eqref{eq:nbp} is $v(\bx)=c$ with $c$ denoting an arbitrary constant. 
		
		Moreover, if $v(\bx)$ solves the system \eqref{eq:nbp}, the function $v(\bx)$ can be written in the form of \eqref{eq:sv}, which follows from that the operator $S_{\partial \Omega}^{0}$ is invertible from  $H^{-1/2}(\partial \Omega)$ to $H^{1/2}(\partial \Omega)$. The density function $\varphi$ will be the kernel of the operator $-1/2 + K_{\partial \Omega}^{0,*}$ due to $v(\bx)$ satisfying the second equation in \eqref{eq:nbp} and the jump formula \eqref{eq:ju_he}. The proof is complete.
	\end{proof}

	\begin{lem}\label{lem:kerl}
		The kernel of the operator $-1/2 +  \mathbf{K}_{\partial \Omega}^{0,*} $ is nontrivial. The dimension of the kernel is $3$ for $N=2$ and $6$ for $N=3$ (cf. \cite{bk:ab}).
	\end{lem}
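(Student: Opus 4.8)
The plan is to mimic the argument of Lemma~\ref{lem:kerh}, transferring the kernel of $-1/2+\mathbf{K}_{\partial\Omega}^{0,*}$ to the solution space of the interior traction-free (Neumann) problem for the static Lam\'e operator. First I would observe that if $\bvarphi\in H^{-1/2}(\partial\Omega)^N$ lies in the kernel, then the function
\begin{equation}\label{eq:ell-static}
	\mathbf{w}(\bx):=\mathbf{S}_{\partial\Omega}^{0}[\bvarphi](\bx),\qquad \bx\in\Omega,
\end{equation}
satisfies $\mathcal{L}_{\lambda,\mu}\mathbf{w}=0$ in $\Omega$ by the mapping properties of the single layer potential, and $T_\nu\mathbf{w}|_{-}=(-\tfrac12\mathbf{I}+\mathbf{K}_{\partial\Omega}^{0,*})[\bvarphi]=0$ on $\partial\Omega$ by the jump formula \eqref{eq:jump}. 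Conversely, since $\mathbf{S}_{\partial\Omega}^{0}:H^{-1/2}(\partial\Omega)^N\to H^{1/2}(\partial\Omega)^N$ is invertible (this is where the strong convexity conditions \eqref{eq:conv} are used; in the two-dimensional case one has to work on the subspace that excludes the logarithmic obstruction, but this does not affect the dimension count), every solution $\mathbf{w}$ of the interior static Neumann problem arises as in \eqref{eq:ell-static} with $\bvarphi=(\mathbf{S}_{\partial\Omega}^{0})^{-1}[\mathbf{w}|_{\partial\Omega}]$, and then $\bvarphi$ lies in the kernel. Hence the dimension of the kernel equals the dimension of the space of solutions to
\begin{equation}\label{eq:nbp-lame}
	\begin{cases}
		\mathcal{L}_{\lambda,\mu}\mathbf{w}(\bx)=0 & \mbox{in}\quad\Omega,\\
		T_\nu\mathbf{w}(\bx)=0 & \mbox{on}\quad\partial\Omega.
	\end{cases}
\end{equation}

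Next I would identify this solution space. Integrating $\mathbf{w}\cdot\mathcal{L}_{\lambda,\mu}\mathbf{w}$ over $\Omega$ and using the first Betti (Korn-type) identity together with the traction-free boundary condition gives
\begin{equation}\label{eq:betti}
	\int_\Omega\Big(\lambda|\nabla\cdot\mathbf{w}|^2+2\mu|\nabla^s\mathbf{w}|^2\Big)\,d\bx=0,
\end{equation}
and by the strong convexity \eqref{eq:conv} (more precisely $\mu>0$ and $N\lambda+2\mu>0$, which guarantee coercivity of the elastic energy) this forces $\nabla^s\mathbf{w}\equiv 0$ in $\Omega$. The classical fact that the only displacement fields with vanishing symmetric gradient on a connected domain are the rigid motions then shows that $\mathbf{w}$ belongs to the space $\mathcal{R}$ of infinitesimal rigid displacements, i.e. $\mathbf{w}(\bx)=\mathbf{a}+B\bx$ with $\mathbf{a}\in\mathbb{R}^N$ and $B$ a constant skew-symmetric $N\times N$ matrix. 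Conversely every such rigid motion solves \eqref{eq:nbp-lame}. Counting parameters, $\dim\mathcal{R}=N+\binom{N}{2}=N(N+1)/2$, which is $3$ when $N=2$ and $6$ when $N=3$, as claimed.

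The main obstacle — and the only point that needs care beyond bookkeeping — is the invertibility of $\mathbf{S}_{\partial\Omega}^{0}$ in two dimensions, where the static Lam\'e single layer potential can have a one-dimensional kernel/cokernel tied to the logarithmic growth of $\bGa^0$, exactly as for the Laplace single layer potential. The standard remedy (see e.g. \cite{bk:ab}) is to replace $\mathbf{S}_{\partial\Omega}^{0}$ by a modified operator whose mapping properties are bijective on the appropriate finite-codimension subspaces; since this modification changes neither the interior PDE \eqref{eq:ell-static} satisfied by the potential nor the dimension of the rigid-motion space, the final count is unaffected. In three dimensions no such correction is needed and the argument is direct. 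I would therefore simply cite \cite{bk:ab} for the two-dimensional invertibility subtlety and present the Betti-identity/rigid-motion argument in full, which delivers the stated dimensions $3$ and $6$.
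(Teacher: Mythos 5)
Your proposal is correct and follows essentially the same route as the paper: transfer the kernel to the traction-free interior Neumann problem for the static Lam\'e operator via the single layer potential and its jump formula, then identify the solution space with the rigid motions. The only difference is that you carry out the Betti-identity/rigid-motion dimension count explicitly (and flag the two-dimensional invertibility subtlety of $\mathbf{S}_{\partial\Omega}^{0}$), whereas the paper simply cites \cite{bk:ab} for these points.
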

	\begin{proof}
		Assume that $\bvarphi$ is the kernel of the operator $-1/2 +  \mathbf{K}_{\partial \Omega}^{0,*} $. Thus the function $\mathbf{u}(\bx)$ expressed by 
		\begin{equation}\label{eq:sve}
			\mathbf{u}(\bx) = \mathbf{S}_{\partial \Omega}^{0}[\bvarphi], \quad \bx\in\Omega,
		\end{equation}
		solves the following Neumann boundary value problem
		\begin{equation}\label{eq:nbpe}
			\begin{cases}
				\mathcal{L}_{\lambda, \mu} \mathbf{u}(\mathbf{x})=0 &  \mbox{in}\quad \Omega ,\\
				T_{\nu} \mathbf{u}(\mathbf{x}) =0 & \mbox{on}\quad \partial \Omega.
			\end{cases}
		\end{equation}
		The dimension of the nontrivial solution to the system \eqref{eq:nbpe} is $6$ for $N=3$ (cf. \cite{bk:ab}). 
		
		Moreover, if $\bu(\bx)$ solves the system \eqref{eq:nbpe}, the function $\bu(\bx)$ can be written in the form of \eqref{eq:sve}, which follows from that the operator $\mathbf{S}_{\partial \Omega}^{0}$ is invertible from  $H^{-1/2}(\partial \Omega)$ to $H^{1/2}(\partial \Omega)$. The density function $\bvarphi$ will be the kernel of the operator $-1/2 + \mathbf{K}_{\partial \Omega}^{0,*}$ due to $\bu(\bx)$ satisfying the second equation in \eqref{eq:nbpe} and the jump formula \eqref{eq:jump}. The proof is complete.
	\end{proof}

	For our use in what follows, we give the definition for the characteristic value \cite{AKL}. 
	\begin{defn}
		Denote by $B$ a Banach space. Let $\mathcal{U}_{z_0}$ be the set of all operator-valued functions with values in $\mathcal{L}(B)$ which are holomorphic in some neighborhood of $z_0$, except possibly at $z_0$. The point $z_0$ is called a {\it characteristic value} of $A(z)\in \mathcal{U}_{z_0}$ if there exists a vector-valued function $\varphi(z)$ with values in $B$ such that
		\begin{enumerate}
			\item $\varphi(z)$ is holomorphic at $z_0$ and $\varphi(z_0)\neq 0$,
			\item $A(z)\varphi(z)$ is holomorphic at $z_0$ and vanishes at this point.
		\end{enumerate}
	\end{defn}
	
	Based on the definition above and Theorem \ref{thm:integral1}, one can conclude the following proposition.
	\begin{prop}
		The existence of the transmission eigenvalues for the system \eqref{eq:syst3} is equivalent to the existence of a characteristic value $k$ to the operator $\mathcal{A}(k,\delta)$ defined in \eqref{eq:defA}. 
	\end{prop}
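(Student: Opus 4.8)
The plan is to reduce the proposition entirely to Theorem~\ref{thm:integral1} together with the elementary interplay between "having a nontrivial kernel at a point" and "being a characteristic value of a holomorphic operator pencil". The only genuine preliminary is to check that $k\mapsto\mathcal{A}(k,\delta)$ is holomorphic as an $\mathcal{L}\big(H^{-1/2}(\partial\Omega)\times H^{-1/2}(\partial\Omega)^N\big)$-valued function on a complex neighbourhood of each $k_0\in\mathbb{R}_+$. This follows because the scalar and vectorial fundamental solutions $G^k$ and $\bGa^{k}$ depend analytically on $k$ away from $k=0$; indeed the series expansions already exhibited in the proofs of Lemmas~\ref{lem:ash} and~\ref{lem:asl} converge locally uniformly and define analytic operator-valued functions of $k$, so each of the four blocks $-I/2+K^{k,*}_{\partial\Omega}$, $-k^2\,\nu\cdot\mathbf{S}^{k\tau}_{\partial\Omega}$, $\delta\tau^2\,\nu S^k_{\partial\Omega}$ and $-\mathbf{I}/2+\mathbf{K}^{k\tau,*}_{\partial\Omega}$ is holomorphic in $k$, hence so is $\mathcal{A}(k,\delta)$. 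In particular $\mathcal{A}(\cdot,\delta)\in\mathcal{U}_{k_0}$ for every $k_0\in\mathbb{R}_+$, so the notion of characteristic value genuinely applies to this pencil.

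Next I would prove the forward implication. Suppose $k_0\in\mathbb{R}_+$ is a transmission eigenvalue of \eqref{eq:syst3}. By Theorem~\ref{thm:integral1} there is a nontrivial $\Phi_0\in H^{-1/2}(\partial\Omega)\times H^{-1/2}(\partial\Omega)^N$ with $\mathcal{A}(k_0,\delta)\Phi_0=0$. Take the constant family $\Phi(k)\equiv\Phi_0$: it is holomorphic at $k_0$ with $\Phi(k_0)=\Phi_0\neq 0$, and since $\mathcal{A}(\cdot,\delta)$ is holomorphic the map $k\mapsto\mathcal{A}(k,\delta)\Phi(k)$ is holomorphic at $k_0$ and vanishes there. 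Hence $k_0$ is a characteristic value of $\mathcal{A}(k,\delta)$.

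Conversely, suppose $k_0\in\mathbb{R}_+$ is a characteristic value of $\mathcal{A}(k,\delta)$, with a root function $\Phi(k)$ holomorphic at $k_0$, $\Phi(k_0)\neq 0$, and $\mathcal{A}(k,\delta)\Phi(k)$ holomorphic and vanishing at $k_0$. Evaluating at $k=k_0$ gives $\mathcal{A}(k_0,\delta)\Phi(k_0)=0$ with $\Phi(k_0)\neq 0$, so $\mathcal{A}(k_0,\delta)$ has a nontrivial kernel; Theorem~\ref{thm:integral1} then shows that $k_0$ is a transmission eigenvalue of \eqref{eq:syst3}. This closes the equivalence.

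I expect no substantive obstacle: the only step requiring any care is the holomorphy of the operator family $\mathcal{A}(\cdot,\delta)$, and even that is essentially bookkeeping on top of the expansions already recorded in Lemmas~\ref{lem:ash} and~\ref{lem:asl}; the remainder is a direct application of Theorem~\ref{thm:integral1} and the definition of a characteristic value. (If one wished to phrase the statement for complex $k$ as well, the same argument applies verbatim on any domain in $\C\setminus\{0\}$ on which the fundamental solutions are analytic; for the purposes of this paper only $k\in\mathbb{R}_+$ is needed.)
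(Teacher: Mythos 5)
Your argument is correct and is exactly the route the paper intends: the paper states this proposition without proof, remarking only that it follows from the definition of a characteristic value together with Theorem~\ref{thm:integral1}, and your write-up simply makes that reduction explicit (the constant root function $\Phi(k)\equiv\Phi_0$ in one direction, evaluation at $k_0$ in the other). The only detail you add beyond the paper, the holomorphy of $k\mapsto\mathcal{A}(k,\delta)$, is a reasonable and correct piece of bookkeeping that the paper leaves implicit.
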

	
	In the next theorem, we show the existence of characteristic values to the operator $\mathcal{A}(k,\delta)$.
	
	\begin{thm}\label{thm:exei}
		Assume that the parameters introduced in Subsection \ref{sub:pa} satisfy 
		\begin{equation}\label{eq:asp}
			k=o(1),\ \ \delta=o(1),\ \ \tau=\mathcal{O}(1),\ \ \mu=\mathcal{O}(1),\ \ \lambda=\mathcal{O}(1).
		\end{equation}
		Then, for any small $\delta$, there exists a characteristic value $k^*(\delta)\ll 1$, which depends on $\delta$, to the operator-valued analytic function $\mathcal{A}(k,\delta)$; that is, there exists a nontrivial pair $(k^*, \Phi^*)$ such that $\mathcal{A}(k^*,\delta)[\Phi^*]=0$.
	\end{thm}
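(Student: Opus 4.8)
The plan is to realise the desired $k^{*}(\delta)$ as a characteristic value of the operator-valued analytic function $k\mapsto\mathcal{A}(k,\delta)$ lying within distance $o(1)$ of the origin; by Theorem~\ref{thm:integral1} together with the preceding proposition this simultaneously yields a transmission eigenvalue of \eqref{eq:syst3}. The tool is the generalised Rouch\'e theorem for operator pencils (the Gohberg--Sigal theory): one compares $\mathcal{A}(\cdot,\delta)$, for $\delta\ll1$, with the reference pencil $\mathcal{A}(\cdot,0)$, whose characteristic values near $k=0$ are completely explicit, and uses stability of the total (null) multiplicity inside a fixed small contour.

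\smallskip\noindent\emph{Step 1: the reference pencil.} By Lemmas~\ref{lem:ash} and~\ref{lem:asl} one may write $\mathcal{A}(k,\delta)=\mathcal{A}(k,0)+\mathcal{E}(k,\delta)$, where the correction $\mathcal{E}(k,\delta)$ consists solely of the $(2,1)$-block $\delta\tau^{2}\nu S^{k}_{\partial\Omega}$, so that $\|\mathcal{E}(k,\delta)\|=O(\delta)$ uniformly for $k$ in a small disc $D_{r_{0}}=\{|k|<r_{0}\}$, while $\mathcal{A}(k,0)$ is block upper-triangular with diagonal blocks $-\tfrac12 I+K^{k,*}_{\partial\Omega}$ and $-\tfrac12\mathbf{I}+\mathbf{K}^{k\tau,*}_{\partial\Omega}$. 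The characteristic values of a triangular pencil are exactly those of its diagonal blocks, with null multiplicities added. The acoustic block is non-invertible precisely when $k^{2}$ is a Neumann eigenvalue of $-\Delta$ on $\Omega$, and the elastic block precisely when $(k\tau)^{2}$ is a traction eigenvalue of $\mathcal{L}_{\lambda,\mu}$ on $\Omega$; by Lemmas~\ref{lem:kerh} and~\ref{lem:kerl} both degenerate at $k=0$, with the one-dimensional, respectively rigid-motion, kernels described there. Choosing $r_{0}$ below the first positive Neumann and traction eigenvalues, $k=0$ is then the \emph{only} characteristic value of $\mathcal{A}(\cdot,0)$ in $D_{r_{0}}$, with some finite total null multiplicity $M_{0}\ge1$ (concretely, $M_{0}$ is the sum of the null multiplicities at $0$ of the two diagonal blocks).

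\smallskip\noindent\emph{Step 2: the argument principle and a finite reduction.} Since $\mathcal{A}(\cdot,0)$ is invertible on $\partial D_{r_{0}}$ and $\|\mathcal{E}(\cdot,\delta)\|=O(\delta)$ there, the generalised Rouch\'e theorem gives, for all sufficiently small $\delta$, that $\mathcal{A}(\cdot,\delta)$ has characteristic values in $D_{r_{0}}$ with total multiplicity exactly $M_{0}$, depending continuously on $\delta$ and collapsing onto $k=0$ as $\delta\to0$ (the last point because $\mathcal{A}(\cdot,0)$ has no characteristic value in $\overline{D_{r_{0}}}\setminus\{0\}$). It remains to produce one such value \emph{strictly away from $0$} --- which is essential, since $k=0$ stays a characteristic value for every $\delta$ (the elastic block alone forces $\ker\mathcal{A}(0,\delta)\ne\{0\}$) and is in any case excluded from \eqref{eq:syst3}. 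To that end I would carry out a Lyapunov--Schmidt / Schur-complement reduction of $\mathcal{A}(k,\delta)$ onto $\ker\mathcal{A}(0,\delta)$, a space of dimension $1+\dim\{\text{rigid motions of }\mathbb{R}^{N}\}$, obtaining an analytic matrix-valued function $\mathcal{M}(k,\delta)$ with $\mathcal{M}(0,\delta)\equiv0$ whose characteristic values near $0$ coincide with those of $\mathcal{A}(\cdot,\delta)$, multiplicities included. Expanding $\det\mathcal{M}(k,\delta)$ in powers of $k$ by means of the explicit low-order terms in Lemmas~\ref{lem:ash} and~\ref{lem:asl}, one aims to show that, in the regime \eqref{eq:asp} and under a mild nondegeneracy hypothesis on $\tau,\mu,\lambda$ and $\Omega$, turning on $\delta$ strictly decreases the order of vanishing of $k\mapsto\det\mathcal{M}(k,\delta)$ at $k=0$; then $M_{0}-\operatorname{ord}_{k=0}\det\mathcal{M}(\cdot,\delta)\ge1$ characteristic values must leave the origin, and by Step~2's continuity they are the $k^{*}(\delta)$ sought, with $k^{*}(\delta)\to0$.

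\smallskip\noindent\emph{The main obstacle.} All the real difficulty is in this last multiplicity drop, which is also where the ``mild condition on the medium parameters'' enters. The coupling in $\mathcal{A}$ is deceptively weak: the $(1,2)$-block carries a factor $k^{2}$ and the $(2,1)$-block a factor $\delta$; worse, the first several contributions of the coupling to the reduced symbol cancel identically because of the geometric orthogonality relations attached to the static kernels of Lemmas~\ref{lem:kerh}--\ref{lem:kerl}, namely $\int_{\partial\Omega}\nu\cdot\mathbf{r}\,ds=0$ for every rigid motion $\mathbf{r}$ and $S^{0}_{\partial\Omega}[\varphi_{b}^{0}]\equiv\text{const}$ on $\partial\Omega$ (where $\varphi_{b}^{0}$ generates the one-dimensional kernel in Lemma~\ref{lem:kerh}), together with the analogous statements for the adjoint kernels. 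One must therefore push the Taylor expansion of $\mathcal{M}(k,\delta)$ in $k$ to the first order at which a genuinely coupled term survives, tracking carefully how the simple pole of $(-\tfrac12\mathbf{I}+\mathbf{K}^{k\tau,*}_{\partial\Omega})^{-1}$ at $k=0$ interacts with the $k^{2}$ weight of the $(1,2)$-block, and finally verify that the resulting coefficient --- an explicit boundary integral assembled from $\tau,\mu,\lambda$ and the geometry of $\Omega$ --- does not vanish. That nonvanishing is precisely the mild condition, and establishing it is where essentially the whole proof lives. (The $k^{*}(\delta)$ obtained this way is a priori complex-valued; its reality, required for it to be a bona fide AE transmission eigenvalue of \eqref{eq:syst3}, would be addressed by a separate symmetry argument.)
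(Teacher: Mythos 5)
Your overall strategy is the same as the paper's: reformulate via Theorem \ref{thm:integral1}, expand $\mathcal{A}(k,\delta)$ around the static, decoupled pencil using Lemmas \ref{lem:ash} and \ref{lem:asl}, observe via Lemmas \ref{lem:kerh} and \ref{lem:kerl} that $k=0$ is a characteristic value of $\mathcal{A}(\cdot,0)$ with finite multiplicity, enclose it by a contour on which $\mathcal{A}(\cdot,0)$ is invertible, and invoke the Gohberg--Sigal generalised Rouch\'e theorem to conserve the total multiplicity inside the contour for small $\delta$. Up to that point your Step 1 and the first half of Step 2 reproduce the paper's argument essentially verbatim (the paper takes $\mathcal{B}=\mathcal{O}(k^2+\delta)$ as the perturbation and stops there).

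The genuine gap is the one you yourself flag and do not close: conservation of the total null multiplicity inside the contour does not by itself produce a characteristic value $k^*(\delta)\neq 0$, because $k=0$ remains a characteristic value of $\mathcal{A}(\cdot,\delta)$ for every $\delta$ (the $(1,2)$-block of \eqref{eq:int1n1} carries the factor $k^2$, so $\mathcal{A}(0,\delta)$ is block lower-triangular with the non-invertible diagonal blocks of Lemmas \ref{lem:kerh} and \ref{lem:kerl}, and in any case $k=0$ is excluded from \eqref{eq:syst3}). To get a nontrivial $k^*(\delta)$ one must show that the multiplicity sitting at $k=0$ strictly drops when $\delta$ is switched on, which is exactly the Lyapunov--Schmidt / multiplicity-drop computation you describe as ``where essentially the whole proof lives'' but leave as a programme rather than a proof; the nonvanishing of the leading coupled coefficient is asserted, not established. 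You should be aware, however, that the paper's own proof does not close this gap either: it simply concludes from Gohberg--Sigal that a characteristic value $k^*(\delta)\ll 1$ exists inside the contour, without separating it from the persistent characteristic value at the origin, and it likewise does not address why $k^*(\delta)$ is real and positive (a point you at least flag). So your proposal is not wrong in approach --- it is the paper's approach, carried out more self-critically --- but as written it is incomplete at the same decisive step, and a complete argument would require actually performing the finite-dimensional reduction and verifying the nondegeneracy condition you postulate.
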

	\Blu{
	\begin{rem}
	We would like to point out that in \eqref{thm:exei}, for any given small $\delta$, we only prove the existence of at least one AE transmission eigenvalue to \eqref{eq:syst2}, though it is generically believed that there should be infinitely many eigenvalues. 
	\end{rem}
     }

	\begin{proof}[Proof of Theorem~\ref{thm:exei}]
		From the assumption in \eqref{eq:asp}, the operator $\mathcal{A}(k,\delta)$ has the following asymptotic expansion
		\[
		\mathcal{A}(k,\delta) = \mathcal{A}_0 + \mathcal{B},
		\]
		where 
		\[
		\mathcal{A}_{0} = \left(
		\begin{array}{cc}
			-\frac{I}{2} + K^{*}_{\partial \Omega} &  0  \medskip \\
			0 & -\frac{I}{2} + {\mathbf{K}}^{*}_{\partial \Omega}\\
		\end{array}
		\right),
		\]
		and $\mathcal{B} = \mathcal{O}(k^2 + \delta)$ following from Lemmas \ref{lem:ash} and \ref{lem:asl}. Moreover, the kernel of the operator $\mathcal{A}_0$ is nontrivial. Indeed, Lemmas \ref{lem:kerh} and \ref{lem:kerl} show that the dimension of the kernel for the operator $\mathcal{A}_0$ is  $3$ in two dimensions and $6$ in three dimensions. Thus $0$ is a characteristic value of the operator $\mathcal{A}(k, 0)$. Since the operator $\mathcal{A}(k,\delta)$ is a Fredholm operator, one can find a curve $\mathcal{C}$ in $\mathbb{C}$ that encloses the origin point $0$, such that the operator $\mathcal{A}(k,0)$ is invertible for $k\in\mathcal{C}$. By the Gohberg-Sigal theory \cite{AKL}, there exists a characteristic value $k^*(\delta)\ll 1$, which is located in the region enclosed by the curve $\mathcal{C}$, to the operator-valued analytic function $\mathcal{A}(k,\delta)$. This completes the proof. 
	\end{proof}
	
	\begin{rem}
		The assumption in \eqref{eq:asp} states that the size of the object $\Omega$ is smaller compared with the wavelengths of all the waves in the system: the acoustic wave and the compressional part as well as the shear part the elastic wave. In such a case, the air bubble $(\Omega; \kappa, \rho_e)$ is known as a nano-bubble, which is widely used in the composite elastic medium theory (cf. \cite{LS}). 
	\end{rem}
	
	 {Finally, we would like to mention that in the next section, we shall study the boundary-localization of the AE transmission eigenfunctions, where as a byproduct we show that within the radial geometry and no restriction on the medium parameters, there are infinitely many AE transmission eigenvalues.  }

	\section{Boundary localization of transmission eigenfunctions}\label{sect:3}

	In this section, we study the boundary-localization of the AE transmission eigenfunctions in 2D and 3D. Let us first focus on the radial case, i.e. we consider the acoustic-elastic transmission eigenvalue problem (\ref{eq:syst3}) with $ \Omega $ being a ball in $ \mathbb{R}^N $, $N=2,3$. Since $\Delta$ and $\mathcal L_{\lambda,\mu}$ are invariant under rigid motions and also noting \eqref{eq:para}, we can assume that $\Omega$ is the unit ball, namely $ \Omega:=\{\mathbf x\in\mathbb{R}^N; |\mathbf x|<1\} $. Define
	\begin{equation}\label{eq:Omega_tau}
		\Omega_{\epsilon}:=\{\mathbf x\in\mathbb{R}^N; |\mathbf x|<\epsilon\},\ \epsilon\in(0,1).
	\end{equation}
	\begin{defn}\label{def2.1}
		Consider a function $ \phi\in L^2(\Omega) $. It is said to be boundary-localized if there exists $ \epsilon\ll 1 $ such that 
		\begin{equation}\label{eq:surface1}
			\frac{\|\phi\|_{L^2(\Omega_{\epsilon})}}{\|\phi\|_{L^2(\Omega)}}\ll 1.
		\end{equation}
	\end{defn}
	\begin{rem}
		It is clear that if $ \phi\in L^2(\Omega) $ is boundary-localized, its $  L^2 $-energy concentrate on a small neighbourhood of $ \partial\Omega $, namely $ \Omega\backslash \Omega_{\epsilon} $. In what follows, we shall make the asymptotic expressions in Definition~\ref{def2.1} more rigorous. In fact, we shall construct a sequence of eigenfunctions $\{\phi_m\}_{m\in\mathbb{N}}$ such that for any given $\epsilon\in (0, 1)$, one has
		\begin{equation}\label{eq:surface1n1}
			\lim_{m\rightarrow\infty}\frac{\|\phi_m\|_{L^2(\Omega_{\epsilon})}}{\|\phi_m\|_{L^2(\Omega)}}=0.
		\end{equation}
		In such a case, we simply refer to $\{\phi_m\}$ as boundary-localized. 
	\end{rem}
	
	By noting \eqref{eq:para}, it is remarked that the p-wavenumber and s--wavenumber associated with the Lam\'e operator in \eqref{eq:syst3} satisfy that
	\begin{equation}\begin{split}\label{eq:k_p}
			k_p&
			=\frac{k\tau}{\sqrt{\lambda+2\mu}}=k\tau,\quad k_s
			=\frac{k\tau}{\sqrt{\mu}},
	\end{split}\end{equation}
	where $k, \lambda, \mu$ and $\tau$ are the parameters in \eqref{eq:syst3}. \Blu{ It is remarked that in \eqref{eq:k_p} we have used the relation $\sqrt{\lambda + 2\mu}=1$ that follows from \eqref{eq:para}.}
	\Blu{Moreover, it is noted that the wave speed in elastic media is larger than that in fluid media, which means that $\tau<1$ from \eqref{eq:node}. Thus in what follows, we shall confine our study on the case $\tau\in (0, 1)$ for physical relevance.}
		
	In the sequel, we let $ m\in\mathbb{N} $ be a positive integer, $ J_m(|\mathbf x|) $ be the first kind Bessel function of order $m$, and $J'_m(|\mathbf x|)  $ be the derivative of $ J_m(|\mathbf x|) $. Furthermore, we let $ j_{m,s} $ denote the $s$-th positive zero of $ J_m(|\mathbf x|) $, in the mean while, $ j'_{m,s} $ denote the $s$-th positive zero of $ J'_m(|\mathbf x|) $. From \cite{Abr}, one has 
	\begin{equation}
		\begin{split}
			&m\le j'_{m,1}<j_{m,1}<j'_{m,2}<j_{m,2}<j'_{m,3}<j_{m,3}<\ldots,\label{eq:jm root} \ J_m(|x|)=\frac{(|x|/2)^m}{\Gamma(m+1)}\prod_{s=1}^{\infty}\left(1-\frac{|x|^2}{j^2_{m,s}}\right). 
		\end{split}
	\end{equation}
	In order to prove the result of boundary-localization, we assume that the order $ m $ of the Bessel function $J_m(x)  $ is sufficiently large.
	
	\subsection{Two-dimensional results}
	In this subsection, we mainly prove the results of boundary-localization in $\mathbb R^2$. Let $ \mathbf{x}:=(x_1,x_2)=(r\cos\theta, r\sin\theta)\in\mathbb{R}^2 $ denote the polar coordinate. Using Fourier expansion, the acoustic-elastic transmission eigenfunctions to \eqref{eq:syst3} associated with the transmission eigenvalues $k\in \mathbb R_+$ have series expansions as follow (cf.\cite{SP,CDLZ,ColtonKress,DLW,DLW22}):
	\begin{equation}\begin{split}\label{eq:uv}
			\mathbf{u}=&\sum_{m=0}^{+\infty}\Biggl\{\alpha_m\left( k_pJ'_m(k_p|\mathbf{x}|)e^{\mathrm{i}m\theta}\cdot\hat{r}+\frac{\mathrm{i}m}{|\mathbf{x}|}J_m(k_p|\mathbf{x}|)e^{\mathrm{i}m\theta}\cdot\hat{\theta}\right)\\
			&+\gamma_m\left(\frac{\mathrm{i}m}{r}J_m(k_s|\mathbf{x}|)e^{\mathrm{i}m\theta}\cdot\hat{r}-k_sJ'_m(k_s|\mathbf{x}|)e^{\mathrm{i}m\theta}\cdot\hat{\theta}\right)\Biggr\},\\
			v=&{\sum_{m=0}^{+\infty}\beta_me^{\mathrm{i}m\theta}   J_m(k|\mathbf{x}|),}
		\end{split}
	\end{equation}
	where $\alpha_m$, $\beta_m$ and $\gamma_m$are complex constants,  $ k_p $ is given in (\ref{eq:k_p}), and 
	\begin{equation}\label{hat}
		\hat{r}:=\left(\begin{array}{c}
			\cos\theta \\
			\sin\theta
		\end{array}
		\right)\ \quad \mbox{and} \quad   \hat{\theta}:=\left(\begin{array}{c}
			-\sin\theta \\
			\cos\theta
		\end{array}
		\right).
	\end{equation}
				
			
	Next, we show that there exists a sequence of discrete  transmission eigenvalues of \eqref{eq:syst3}. Moreover,  we prove that infinity is the  only accumulation point of this sequence. 
	
	\begin{lem}\label{lem2.1}
		Consider \eqref{eq:syst3}, where $ \Omega\in\mathbb{R}^2 $ is  the unit  disk and the parameter $\tau$ is chosen such that { $\tau<j_{m,1}/ j_{m,2}<1$}.  Let $ \{k_{m,\ell}~|~\ell=1,2,\ldots\}$ be the subset of AE transmission eigenvalues $k$  of (\ref{eq:syst3}), where $ m $ is the order of the Bessel functions and $\ell $ denotes the $\ell$-th eigenvalue for a fixed $m$. 
		Then there exists a subsequence of $ \{k_{m,\ell}\} $, denoted by $ \{k_{m,s(m)}\} $, such that for $ m $
		sufficiently  large, it holds that 
		\begin{equation}\label{kms}
			k_{m,s}\in \left(j_{m,1},\ j_{m,2}\right),
		\end{equation}	
		 Furthermore, it yields that
		\begin{equation}\label{con1}
			\frac{	k_{m,s}}{m}\to 1,\quad \mbox{as}\quad m\to+\infty.
		\end{equation}
		and 
		\begin{equation}\label{con2}
			k_{m,s}=m\left(1+C_0m^{-\frac{2}{3}}+o(m^{-\frac{2}{3}})\right),
		\end{equation}
		where $C_0$ is a positive constant not depending on $m$. 
	\end{lem}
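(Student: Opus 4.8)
The plan is to reduce \eqref{eq:syst3} mode by mode (in the Fourier index $m$) to a single scalar characteristic equation, to locate one of its roots inside $(j_{m,1},j_{m,2})$ by an intermediate-value argument, and to read off \eqref{con1}--\eqref{con2} from the uniform asymptotics of the Bessel zeros. First I would substitute the $m$-th terms of the series \eqref{eq:uv} into the transmission conditions of \eqref{eq:syst3} on $\partial\Omega=\{|\mathbf x|=1\}$. Writing $\mathbf u\cdot\nu$, the radial and tangential components of $T_\nu\mathbf u$, and the traces of $v$ and $\partial_\nu v$ at $r=1$ in terms of $J_m(k_p),J_m'(k_p),J_m(k_s),J_m'(k_s),J_m(k),J_m'(k)$ --- using \eqref{eq:k_p} and Bessel's equation to eliminate second derivatives --- the conditions reduce to a homogeneous $3\times3$ linear system for $(\alpha_m,\gamma_m,\beta_m)$ with coefficient matrix $\mathcal M_m(k)$. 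Hence $k$ is a mode-$m$ AE transmission eigenvalue of \eqref{eq:syst3} precisely when $\mathscr D_m(k):=\det\mathcal M_m(k)=0$ and the corresponding null vector produces a nontrivial pair. Each $\mathscr D_m$ is entire and not identically zero, so for fixed $m$ the eigenvalues form a discrete set accumulating only at $\infty$, which makes the indexing $\{k_{m,\ell}\}_{\ell\ge1}$ meaningful; the discreteness and accumulation only at $\infty$ of the full spectrum is \cite{KR}. Expanding $\mathscr D_m$ along the column carrying the acoustic traces yields a decomposition of the schematic form
\[
\mathscr D_m(k)=-\frac1k\,J_m'(k)\,\mathscr N_m(k)-\delta\tau^2\,J_m(k)\,\mathscr R_m(k),
\]
where $\mathscr N_m(k)$ is exactly the characteristic determinant of the traction-free (Neumann) Lam\'e problem of mode $m$ on the unit disk and $\mathscr R_m$ is a related $2\times2$ minor, both built solely from Bessel functions evaluated at $k_p=k\tau$ and $k_s=k\tau/\sqrt{\mu}$.

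Next I would evaluate $\mathscr D_m$ at the endpoints $k=j_{m,1}$ and $k=j_{m,2}$. Since $J_m(k)=0$ there, $\mathscr D_m(j_{m,\ell})=-j_{m,\ell}^{-1}J_m'(j_{m,\ell})\,\mathscr N_m(j_{m,\ell})$, and by \eqref{eq:jm root} one has $J_m'(j_{m,1})<0$ (as $J_m>0$ on $(0,j_{m,1})$ and then crosses zero downward) while $J_m'(j_{m,2})>0$, so the two prefactors carry opposite signs. The crucial point is that $\mathscr N_m$ has no zero --- hence a fixed sign --- on $[j_{m,1},j_{m,2}]$ for $m$ large, i.e. this interval contains no mode-$m$ traction-free Lam\'e eigenvalue. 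This is where the hypothesis $\tau<j_{m,1}/j_{m,2}$ is used: it forces $k_p=k\tau<j_{m,1}$ throughout the interval (so $J_m(k_p)>0$ there), and together with the uniform turning-point (Debye/Airy) asymptotics of $J_m,J_m'$ it excludes a cancellation in $\mathscr N_m$ on the interval once $m$ is large. Granting this, $\mathscr D_m(j_{m,1})$ and $\mathscr D_m(j_{m,2})$ have opposite signs, and the continuity of $\mathscr D_m$ produces a zero $k_{m,s}\in(j_{m,1},j_{m,2})$; as such $k$ is neither a zero of $J_m$ nor of $J_m(k_p)$, the resulting null vector has $\mathbf u\not\equiv 0$ and hence furnishes a genuine pair of AE transmission eigenfunctions. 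Taking $s=s(m)$ to be the index of (say, the smallest) eigenvalue in this interval establishes \eqref{kms}.

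Finally, \eqref{con1} and \eqref{con2} follow from the classical expansion $j_{m,\ell}=m+\mu_\ell m^{1/3}+O(m^{-1/3})$ with $\mu_\ell=2^{-1/3}|a_\ell|>0$, $a_\ell$ the $\ell$-th negative zero of the Airy function. Since $j_{m,1}<k_{m,s}<j_{m,2}$ and $j_{m,\ell}/m\to1$, the squeeze theorem gives $k_{m,s}/m\to1$, which is \eqref{con1}. For the sharper \eqref{con2} I would set $k=m+t\,m^{1/3}$ and invoke the Airy-type asymptotics of $J_m,J_m',J_m''$ near the turning point $k\approx m$ to show that, after a suitable normalization, $\mathscr D_m(m+t\,m^{1/3})$ converges as $m\to\infty$ to an explicit function of $t$ assembled from $\mathrm{Ai}$, $\mathrm{Ai}'$ and the frozen elastic data, whose relevant zero is a fixed $C_0\in(\mu_1,\mu_2)$; hence $k_{m,s}=m+C_0m^{1/3}+o(m^{1/3})=m\bigl(1+C_0m^{-2/3}+o(m^{-2/3})\bigr)$ with $C_0>0$ independent of $m$ (it may depend on $\tau,\mu,\delta$). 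I expect the real obstacle to be this middle step: proving that $\mathscr N_m$ does not vanish on $[j_{m,1},j_{m,2}]$ and pushing the turning-point analysis through \emph{uniformly in $m$} on a window of width $O(m^{1/3})$ about $k=m$, where the two competing scales $k_p$ and $k_s$ and the bookkeeping of the $3\times3$ determinant (after the second-derivative reduction via Bessel's equation) have to be controlled carefully; the remaining manipulations are routine though lengthy.
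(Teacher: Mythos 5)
Your overall strategy is the same as the paper's: reduce \eqref{eq:syst3} mode by mode to a scalar characteristic equation, locate a sign change of that function at the consecutive zeros $j_{m,1},j_{m,2}$ (where the acoustic factor $J_m(k)$ drops out and the sign flip is carried by $J_m'(j_{m,\ell})=J_{m-1}(j_{m,\ell})$ via interlacing, with the hypothesis $\tau<j_{m,1}/j_{m,2}$ guaranteeing $J_m(\tau j_{m,1})J_m(\tau j_{m,2})>0$), and then read off the asymptotics from \eqref{eq:jms up}. The one structural difference is the ansatz: you keep all three potentials $(\alpha_m,\gamma_m,\beta_m)$ and work with the exact $3\times3$ determinant $\mathscr D_m$, whereas the paper sets $\gamma_m=0$ in \eqref{eq:u,v}, which leaves two unknowns against three scalar boundary conditions. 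The paper resolves this overdetermination by extracting two relations \eqref{rela1} and \eqref{rela1.5}, solving only the combination \eqref{rela2} exactly, and arguing in Remark \ref{rem:as18} that \eqref{rela1} holds merely in an asymptotic sense as $m\to\infty$ (the defect being exponentially small). Your route avoids that asymptotic caveat entirely, but it buys you the harder non-degeneracy step you correctly flag: showing the elastic minor $\mathscr N_m$ keeps a fixed sign on $[j_{m,1},j_{m,2}]$. In the paper's reduced setting the analogue of $\mathscr N_m$ degenerates to the explicit product $\bigl[-k^2\tau^2+2\mu(m^2-1)\bigr]J_m(\tau k)$, whose positivity on the interval is checked directly in \eqref{eq:1}; in the full $3\times3$ setting this claim is genuinely open in your write-up and is the main gap you would have to close.

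Two further remarks. First, your IVT step produces a zero of the scalar characteristic function, but to conclude that this zero is an AE transmission eigenvalue you still need the corresponding null vector to yield a nontrivial pair; you assert this via $J_m(k)\neq 0$ and $J_m(k_p)\neq 0$ at the root, which is fine, and the paper handles it the same way (nonvanishing of $\alpha_m,\beta_m$). Second, on \eqref{con2}: the containment $k_{m,s}\in(j_{m,1},j_{m,2})$ together with \eqref{j_m} only traps $m^{2/3}(k_{m,s}/m-1)$ between $C_1+o(1)$ and $C_2+o(1)$ with $C_1<C_2$; it does not by itself identify a single limit $C_0$. The paper passes over this point in one line, while your proposed Airy/turning-point analysis of $\mathscr D_m(m+tm^{1/3})$ is exactly what would be needed to pin down $C_0$ rigorously --- so on this point your plan is more careful than the paper's argument, though you have not carried it out.
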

	\begin{proof}
		Let $k$ be an AE transmission eigenvalue of \eqref{eq:syst3}. 
		Recall that $k_p$ is defined in \eqref{eq:k_p}.  For a fixed $ m\in\mathbb{N}_+ $,  using Fourier expansion, it is not difficult to see that \begin{equation}\label{eq:u,v}
			\begin{split}
				\mathbf{u}_m(\mathbf{x})&=\alpha_m k_pJ'_m(k_p|\mathbf{x}|)e^{\mathrm{i}m\theta}\cdot\hat{r}+\alpha_m\frac{\mathrm{i}m}{|\mathbf{x}|}J_m(k_p|\mathbf{x}|)e^{\mathrm{i}m\theta}\cdot\hat{\theta}, \ 
				v_m(\mathbf{x})=\beta_mJ_m(k|\mathbf{x}|)e^{\mathrm{i}m\theta},
			\end{split}
		\end{equation}
		where $ \alpha_m $ and $ \beta_m $ are nonzero constants, 
		is a pair solution to  $ \mathcal{L}_{\lambda, \mu}\bmf {u}(\mathbf{x}) + k^2\tau^2\bmf {u}(\mathbf{x})= \mathbf  0$ and $ \Delta v(\mathbf{x})+k^2v(\mathbf{x})=0$  in $ \Omega $.  Hence, by the first transmission condition of (\ref{eq:syst3}) on $ \partial\Omega $, one can arrive at that
		\begin{equation}\notag 
			\bmf {u}_m(\mathbf{x})\cdot \nu-\frac{1}{k^2}\nabla v_m(\mathbf{x})\cdot \nu =\bmf {u}_m\cdot\hat{r}-\frac{1}{k^2}\frac{\partial v_m}{\partial r} = \alpha_m k_p J'_m(k_pr)e^{\mathrm{i}m\theta}-\frac{1}{k}\beta_m J'_m(kr)e^{\mathrm{i}m\theta}=0.
		\end{equation}
		It is ready to know that
		\begin{equation}\label{betam}
			\beta_m=\alpha_m k_p k\frac{J'_m(k_p)}{J'_m(k)}.
		\end{equation} 	it is obvious that $\alpha_m$ and  $\beta_m$ are not zero by contradiction.
		By direct calculation, we can obtain that 
		\begin{equation}\label{grad u}
			\nabla\cdot\bmf{u}_m
			=
			\alpha_mk_p^2J''_m(k_pr)e^{\mathrm{i}m\theta}+\alpha_m\frac{k_p}{r}J'_m(k_pr)e^{\mathrm{i}m\theta}-\alpha_m\frac{m^2}{r^2}J_m(k_pr)e^{\mathrm{i}m\theta},
		\end{equation}
		and 
		\begin{equation}
			\begin{split}\label{partial1}
				\frac{\partial u_{m,1}}{\partial x_2}=&\alpha_mk_p^2J''_m(k_pr)e^{\mathrm{i}m\theta}\cos\theta\sin\theta+\alpha_m\frac{\bsi m}{r^2}J_m(k_pr)e^{\mathrm{i}m\theta}\sin^2\theta-\alpha_m\frac{\bsi m}{r}k_p J'_m(k_pr) \\
				&\times e^{\mathrm{i}m\theta} \sin^2\theta +\alpha_m\frac{\bsi m}{r}k_p J'_m(k_pr) e^{\mathrm{i}m\theta}\cos^2\theta-\alpha_m\frac{k_p}{r} J'_m(k_pr) e^{\mathrm{i}m\theta}\sin\theta\cos\theta\\
				&+\alpha_m\frac{m^2}{r^2} J_m(k_pr) e^{\mathrm{i}m\theta}\sin\theta\cos\theta-\alpha_m\frac{\bsi m}{r^2}J_m(k_pr)e^{\mathrm{i}m\theta}\cos^2\theta=\frac{\partial u_{m,2}}{\partial x_1}.
			\end{split}
		\end{equation}
		By using the second transmission condition of (\ref{eq:syst3}) on $ \partial\Omega $, one has
		\begin{equation}\label{bound2}
			\begin{split}
				&\left(\lambda(\nabla\cdot\bmf{u}_m)I_2+2\mu)	\nabla^s\bmf{u}_m+\delta\tau^2 v_m\right)\cdot\hat{r}\\
				=&\lambda\left(\nabla\cdot\bmf{u}_m\right)\left(\begin{array}{c}
					\cos\theta \\
					\sin\theta
				\end{array}
				\right)+2\mu\left(\begin{array}{c}
					\frac{\partial u_{m,1} }{\partial x_1}\cos\theta+\frac{\partial u_{m,1} }{\partial x_2}\sin\theta \\
					\frac{\partial u_{m,2} }{\partial x_1}\cos\theta+\frac{\partial u_{m,2}}{\partial x_2}\sin\theta
				\end{array}
				\right)+\delta\tau^2 v_m\left(\begin{array}{c}
					\cos\theta \\
					\sin\theta
				\end{array}
				\right)=0. 
			\end{split}
		\end{equation}
		{	Comparing the coefficients of the functions $\sin\theta$ and $\cos\theta$ in \eqref{bound2} and combining equations \eqref{eq:u,v} to \eqref{partial1},
			there hold
			\begin{equation}\label{rela1}
				J_m(k_p)=k_pJ'_m(k_p),
			\end{equation}
			and
			\begin{equation}\label{rela1.5}
				-k_p^2J_m(k_p)J'_m(k)-2\mu k_pJ'_m(k_p)J'_m(k)+2\mu m^2J_m(k_p)J'_m(k)+\delta\tau^2kk_pJ'_m(k_p)J_m(k)=0.
			\end{equation} 
			Now, we want to find a $k$ that satisfies both (\ref{rela1}) and (\ref{rela1.5}). Instead of directly solving the two equations, we first substitute (\ref{rela1}) into (\ref{rela1.5}), one has 
			\begin{equation}\label{rela2}
				\left[-k_p^2J'_m(k)+2\mu m^2 J'_m(k)-2\mu J'_m(k)+\delta\tau^2 k J_m(k)\right] J_m(k_p)=0.
			\end{equation}
			Apparently, if $k$ solves the equations \eqref{rela1} and \eqref{rela1.5}, the equation \eqref{rela2} holds. On the other hand, if $k$ is the root of the equations \eqref{rela1} and \eqref{rela2}, $k$  solves the equation \eqref{rela1.5}. Thus, in the rest of the proof, we shall confine ourselves on finding the root $k$ of the equation \eqref{rela2}. Then we will show that the root $k$ also solves \eqref{rela1} in the asymptotic sense and please refer to Remark \ref{rem:as18}.
		}
		
		By utilizing $ J'_m(|\mathbf{x}|)=J_{m-1}(|\mathbf{x}|)-\frac{m}{|\mathbf{x}|}J_{m}(|\mathbf{x}|) $ and ($ \ref{eq:k_p} $),  the root $ k $ of \eqref{rela2} coincides with that of the following function:
		\begin{equation}\begin{split}
				\label{rela3}
				f(k):=&\left[-k^2\tau^2+2\mu(m^2-1)\right]J_{m-1}(k)J_{m}(\tau k)\\
				&+\Big[k\tau^2m-\frac{2\mu(m^2-1)m}{k}+\delta\tau^2k\Big]J_{m}(k)J_{m}(\tau k). 
		\end{split}\end{equation}
		
		

		Next, we compute the following quantity:
		\begin{equation}\begin{split}\label{f}
				f_m(j_{m,1})f_m(j_{m,2})=&\left[-j_{m,1}^2\tau^2+2\mu(m^2-1)\right]\left[-j_{m,2}^2\tau^2+2\mu(m^2-1)\right]\\
				&\times J_{m-1}(j_{m,1})J_{m-1}(j_{m,2})J_{m}(\tau j_{m,1})J_{m}(\tau j_{m,2})\\
			\end{split}
		\end{equation}

		Suppose that $a_s$ is the $s$th negative zero of the Airy function. From   \cite{FWJO10},  we have   $a_s=-[3\pi/8(4s-1)]^{2/3}(1+\Upsilon_s ) $, where $\Upsilon_s\in [0,\,  0.13(3\pi/8(4s-1.051) )^{-1}] $. The positive root of the Bessel function can have the following sharp lower and upper bound. Indeed, from \cite{quwong} one knows that
		\begin{equation}\label{eq:jms up}
			m\left(1- \frac{ a_s }{(2m^2)^{1/3}}  \right)	<j_{m,s} <m\left(1-\frac{ a_s }{(2m^2)^{1/3}} +\frac{3a_s^2}{20}\left(\frac{2}{m^4} \right)^{1/3} \right),
		\end{equation}
		where $j_{m,s}$ is the $s$th positive root of $J_m(|\mathbf x|)$. Hence,  when $ m $ is sufficiently large, there hold that
		\begin{equation}\begin{split}\label{j_m}
				j_{m,1}&=m\Big(1+C_1m^{-2/3}+o(m^{-2/3})\Big),\\
				j_{m,2}&=m\Big(1+C_2m^{-2/3}+o(m^{-2/3})\Big),
			\end{split}
		\end{equation} where $ C_i,i=1,2 $ is a positive constant not depending on $m$.  By virtue of \eqref{j_m}, when $m$ is sufficient large, we have 
		\begin{equation}\begin{split}\label{eq:1}
				&\left[-j_{m,1}^2\tau^2+2\mu(m^2-1)\right]\left[-j_{m,2}^2\tau^2+2\mu(m^2-1)\right]\\
				&=m^4\,\Pi_{i=1}^2\left[\tau^2-2\mu + \tau ^2 C_i  m^{-2/3 }+o(m^{-2/3}) +2\mu m^{-2} \right] >0
		\end{split}\end{equation}
		for any positive constants $\tau$ and $\mu$. 
		{
			Moreover, from the choice of $\tau j_{m,2}<j_{m,1}$, the following inequality holds
			\begin{equation}
				J_{m}(\tau j_{m,1})J_{m}(\tau j_{m,2})>0.
			\end{equation}
			Since the positive zeros of $J_{m-1} $ are interlaced with those of $J_m$, we have 
			\[
			J_{m-1}(j_{m,1})J_{m-1}(j_{m,2})<0.
			\]
			By using the above facts, we derive that 
			\begin{equation}\begin{split}\label{f}
					f_m(j_{m,1})f_m(j_{m,2})<0,
				\end{split}
			\end{equation} which implies that $k_{m,s}\in \left(j_{m,1},j_{m,2}\right) $.
			Finally,  using (\ref{j_m}) one has (\ref{con1}) and (\ref{con2}).	}
		The proof is complete.
	\end{proof}
	
	{\begin{rem}\label{rem:as18}
			Here we remark that if $k_{m,s}$ solves \eqref{rela2}, then $k_{m,s}$  solves the equation \eqref{rela1} in the asymptotic sense. 
			Indeed, if $k_{m,s}$ is a root of (\ref{rela2}), for sufficiently large $m$ and $\tau<j_{m,1}/j_{m,2}<1$, using (\ref{kms}),(\ref{con1}) and the following asymptotic expansion (cf.\cite{Korenev})
			\begin{equation}\label{asy1}
				J_m(k_{m,s}r)=\frac{z^me^{m\sqrt{1-z^2}}}{(2\pi m)^{1/2}(1-z^2)^{1/4}(1+\sqrt{1-z^2})^m}\left(1+o(1)\right),\  z=\frac{k_{m,s}r}{m},\ 0<r<1,
			\end{equation}
			we derive that
			\begin{equation}
				J_m(k_p)-k_pJ'_m(k_p)=Cm^\frac{1}{2}\left(\frac{\tau e^{\sqrt{1-\tau^2}}}{1+\sqrt{1-\tau^2}}\right)^{m-1}\rightarrow 0 \quad \mbox{as} \quad m\rightarrow \infty,
			\end{equation} 
			where C is a constant. In the derivation of the last inequality, we have used the fact $0<\frac{\tau e^{\sqrt{1-\tau^2}}}{1+\sqrt{1-\tau^2}}<1$ due to $\tau<1$ . Thus, $k_{m,s}$ solves (\ref{rela1}) in the asymptotic sense. 
		\end{rem}
	}
	
	Next, we shall prove that the AE transmission eigenfunction $v_m$ associated with transmission eigenvalue $k_{m,s}$ given by Lemma \ref{lem2.1} is boundary-localized. 
	
	\begin{thm}\label{th2.4}
		Consider the AE transmission eigenvalue problem (\ref{eq:syst3}). Let  $ \Omega $ be the unit disk in $ \mathbb{R}^2 $ and $ \Omega_{\epsilon} $ be given in (\ref{eq:Omega_tau}). For any fixed $ \epsilon\in(0,1) $, 
		there exist transmission eigenfunctions $ v_m $, $m\in\mathbb{N}$, associated with the eigenvalues $ k_{m,s} $ of (\ref{eq:syst3}) described in Lemma \ref{lem2.1} such that 
		\begin{equation}\label{eq:th:sur3}
			\lim_{ m \to \infty}\frac{\|v_m\|_{L^2(\Omega_{\epsilon})}}{\|v_m\|_{L^2(\Omega)}}=0.
		\end{equation}
	\end{thm}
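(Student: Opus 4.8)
The plan is to reduce the claimed ratio to a one–dimensional ratio of weighted integrals of a single Bessel function and then separate scales, showing the numerator is exponentially small in $m$ while the denominator is only polynomially small. By \eqref{eq:u,v} the relevant eigenfunction is $v_m(\mathbf{x})=\beta_m J_m(k_{m,s}|\mathbf{x}|)e^{\mathrm{i}m\theta}$ with $\beta_m\neq 0$ (see \eqref{betam}); passing to polar coordinates and using $\int_0^{2\pi}|e^{\mathrm{i}m\theta}|^2\,d\theta=2\pi$, both the constant $\beta_m$ and the angular factor cancel, so that
\[
\frac{\|v_m\|_{L^2(\Omega_\epsilon)}^2}{\|v_m\|_{L^2(\Omega)}^2}=\frac{\int_0^\epsilon J_m(k_{m,s}r)^2\,r\,dr}{\int_0^1 J_m(k_{m,s}r)^2\,r\,dr}.
\]
It therefore suffices to bound the numerator from above by an exponentially small quantity and the denominator from below by an only polynomially small one.

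For the numerator, recall from Lemma~\ref{lem2.1} that $k_{m,s}\in(j_{m,1},j_{m,2})$ and $k_{m,s}/m\to1$, so for all large $m$ one has $k_{m,s}\epsilon<m\le j'_{m,1}$; since $J_m$ is positive and increasing on $(0,j'_{m,1})$ (see \eqref{eq:jm root}), $\int_0^\epsilon J_m(k_{m,s}r)^2 r\,dr\le \tfrac{\epsilon^2}{2}J_m(k_{m,s}\epsilon)^2$. Applying the asymptotic expansion \eqref{asy1} with $z=z_m:=k_{m,s}\epsilon/m\to\epsilon\in(0,1)$ gives $J_m(k_{m,s}\epsilon)^2=\dfrac{g(z_m)^{2m}}{2\pi m\sqrt{1-z_m^2}}(1+o(1))$, where $g(z):=\dfrac{z e^{\sqrt{1-z^2}}}{1+\sqrt{1-z^2}}$. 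A short computation yields $\dfrac{d}{dz}\ln g(z)=\dfrac{\sqrt{1-z^2}}{z}>0$ on $(0,1)$, so $g$ increases there with $g(1^-)=1$; hence $g(z_m)\le\rho_0<1$ for some fixed $\rho_0\in(0,1)$ and all large $m$, and we conclude $\int_0^\epsilon J_m(k_{m,s}r)^2 r\,dr\le C\,m^{-1}\rho_0^{2m}$.

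The crux is the lower bound on the denominator. Since $j_{m,1}<k_{m,s}$ we may restrict the radial integral to $[0,r_0]$ with $r_0:=j_{m,1}/k_{m,s}\in(0,1)$ and, via the substitution $t=k_{m,s}r$ and the classical orthogonality identity $\int_0^{j_{m,1}}J_m(t)^2 t\,dt=\tfrac12 j_{m,1}^2\,J_{m+1}(j_{m,1})^2$, obtain
\[
\int_0^1 J_m(k_{m,s}r)^2 r\,dr\ \ge\ \frac{1}{k_{m,s}^2}\int_0^{j_{m,1}}J_m(t)^2 t\,dt\ =\ \frac{r_0^2}{2}\,J_{m+1}(j_{m,1})^2 .
\]
As $r_0\to1$, it remains to bound $J_{m+1}(j_{m,1})^2=J_m'(j_{m,1})^2$ from below (the equality uses $J_m(j_{m,1})=0$ and the recurrence $J_{m+1}=\tfrac{m}{x}J_m-J_m'$). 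Here one invokes the classical lower bound for the Bessel amplitude at its first zero, $J_m'(j_{m,1})^2>\tfrac{2}{\pi j_{m,1}}(1-m^2/j_{m,1}^2)^{1/2}$ (equivalently, the uniform Airy-type asymptotics of $J_m$ near the turning point): since Lemma~\ref{lem2.1} and \eqref{eq:jms up} give $j_{m,1}=m(1+C_1 m^{-2/3}+o(m^{-2/3}))$, one has $1-m^2/j_{m,1}^2\asymp m^{-2/3}$ and hence $J_{m+1}(j_{m,1})^2\ge c\,m^{-4/3}$, so $\int_0^1 J_m(k_{m,s}r)^2 r\,dr\ge c'\,m^{-4/3}$. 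This step is the main obstacle: within the unit disk the Bessel argument $k_{m,s}r$ never leaves the turning-point regime $\{t\lesssim m\}$, so the $L^2$-mass of $v_m$ genuinely lives in a boundary layer of width $\asymp m^{-2/3}$, and the convenient exponential-decay expansion \eqref{asy1} is not available near $\partial\Omega$.

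Combining the two estimates, $\dfrac{\|v_m\|_{L^2(\Omega_\epsilon)}^2}{\|v_m\|_{L^2(\Omega)}^2}\le \dfrac{C\,m^{-1}\rho_0^{2m}}{c'\,m^{-4/3}}=C''\,m^{1/3}\rho_0^{2m}\to0$ as $m\to\infty$, and taking square roots yields \eqref{eq:th:sur3}.
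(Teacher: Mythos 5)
Your argument is correct, and it reproduces the paper's treatment of the reduction and of the numerator essentially verbatim: both pass to the radial integrals, both use the monotonicity of $J_m$ on $(0,j'_{m,1})$ to bound $\int_0^\epsilon rJ_m^2(k_{m,s}r)\,dr$ by $\epsilon^2 J_m^2(k_{m,s}\epsilon)$, and both then invoke the Debye-type expansion \eqref{asy1} to extract the exponentially small factor. Where you genuinely diverge is the lower bound on $\|v_m\|_{L^2(\Omega)}^2$, which you correctly identify as the crux. The paper restricts the radial integral to the thin oscillatory annulus $\Xi<r<1$ with $\Xi=j'_{m,1}/k_{m,s}$ as in \eqref{r1}, inserts the trigonometric asymptotics \eqref{J_m}, and averages $\cos^2$ to arrive at $\frac{1}{\pi k_{m,s}^2}\bigl(\sqrt{k_{m,s}^2-m^2}-\sqrt{j_{m,1}'^2-m^2}\bigr)\asymp m^{-4/3}$ in \eqref{vm1}. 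You instead restrict to $[0,j_{m,1}/k_{m,s}]$ and use the exact Lommel/orthogonality identity $\int_0^{j_{m,1}}J_m(t)^2t\,dt=\tfrac12 j_{m,1}^2 J_{m+1}(j_{m,1})^2$ together with the Wronskian-plus-modulus bound $J_m'(j_{m,1})^2\ge \tfrac{2}{\pi j_{m,1}}(1-m^2/j_{m,1}^2)^{1/2}$; both routes give the same $m^{-4/3}$ and hence the same final rate $m^{1/3}\rho_0^{2m}$ as in \eqref{compare1}. Your route has the advantage of being an exact identity plus a classical inequality, avoiding the somewhat informal ``$\sim$'' steps and the averaging of $\cos^2 x_m$ in \eqref{vm1} (where, incidentally, the paper's $x_m$ carries a spurious $\tau$); the paper's route has the advantage of generalizing directly to the elastic field $\mathbf{u}_m$ in Theorem \ref{th2.5}, where no such closed-form radial identity is available. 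One small inaccuracy in your commentary (not in the proof): the argument $k_{m,s}r$ does leave the turning-point regime for $r\in(\Xi,1)$, since $k_{m,s}>j_{m,1}>m$ — this thin oscillatory window is precisely what the paper exploits — but this does not affect the validity of your estimates.
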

	\begin{proof}
		Let the acoustic-elastic transmission eigenfunction $v_m$ associated with the transmission eigenvalue $k_{m,s}$ be given in the form (\ref{eq:u,v}), where $k_{m,s}$ fulfils \eqref{kms}. Hence by direct calculations, it yields that
		\begin{equation}\label{v_m1}
			\begin{split}
				\|v_m\|^2_{L^2(\Omega_{\epsilon})}&=|\beta_m|^2\int_{\Omega_{\epsilon}}|J_m(k_{m,s}|\mathbf{x}|)|^2 d\mathbf{x}=2\pi|\beta_m|^2\int_{0}^{\epsilon}rJ^2_m(k_{m,s}r) dr,\\
				\|v_m\|^2_{L^2(\Omega)}&=|\beta_m|^2\int_{\Omega}|J_m(k_{m,s}|\mathbf{x}|)|^2 d\mathbf{x}=2\pi|\beta_m|^2\int_{0}^{1}rJ^2_m(k_{m,s}r) dr.
			\end{split}
		\end{equation}

		Recall that $j'_{m,1}$ is the first zero of $J'_m(|\mathbf x|)$. According to \eqref{eq:jm root} and \eqref{eq:jms up}, it readily yields that
		\begin{equation}\label{eq:jpm1}
			j'_{m,1}=m(1+O(m^{-2/3})) \quad \mbox{when} \quad  m\rightarrow \infty. 
		\end{equation}
		Due to \eqref{con2} and \eqref{eq:jpm1}, for sufficiently large $m$, one can claim that
		$$
		k_{m,s} r<j'_{m,1},\quad r\in (0,1). 
		$$
		Denote $H_m(t)=J_m (k_{m,s}r)$, where $r\in (0,1)$ and $t=k_{m,s}r\in (0,j'_{m,1})$. By virtue of \eqref{eq:jm root}, $J_m(t)\geq 0$ for $t\in [0, j'_{m,1}]$ and  $j'_{m,1}$ is the first maximum point  of  $J_m (t)$. Hence $H_m(t)$ is monotonically increasing in $(0,1)$, which implies that  $ J'_m(k_{m,s}r) >0 $ for $ 0<r<1 $.

		
		Hence, using (\ref{asy1}) and (\ref{v_m1}), we can obtain the 	asymptotic upper bound for the integral as follows
		\begin{equation}
			\begin{split}\label{vmeps1}
				& \int_{0}^{\epsilon}rJ^2_m(k_{m,s}r) dr \leqslant \epsilon^2J^2_m(k_{m,s}\epsilon)\\
				&=\frac{\epsilon^2}{2\pi m\left(1-(\frac{k_{m,s}\epsilon}{m})^2\right)^{1/2}}\left(\frac{k_{m,s}\epsilon}{m} \frac{e^{\sqrt{1-(\frac{k_{m,s}\epsilon}{m})^2}}}{1+\sqrt{1-(\frac{k_{m,s}\epsilon}{m})^2}} \right)^{2m} \left(1+o(1)\right)^2,
			\end{split}
		\end{equation}
		when $m$ is sufficient large. Furthermore,  in view of \eqref{eq:jpm1}, we denote
		\begin{equation}\label{r1}
			\Xi =\frac{j'_{m,1}}{k_{m,s}}.
		\end{equation} 
		One  has $ \epsilon<\Xi<1 $ when  $ m $ is sufficiently large. Hence, it yields that $ m<j'_{m,1}=k_{m,s}r_1 $. { Using   \eqref{con2}, \eqref{eq:jpm1} and the following asymptotic formula: 
			\begin{equation}\label{J_m}
				J_m(|\mathbf{x}|)=\sqrt{\frac{2}{\pi\sqrt{|\mathbf{x}|^2-m^2}}}\cos\left(\sqrt{|\mathbf{x}|^2-m^2}-\frac{m\pi}{2}+m\arcsin(\frac{m}{|\mathbf{x}|})-\frac{\pi}{4}\right)\left(1+o(1)\right),
			\end{equation} for sufficient large $m$, we can deduce that
			\begin{equation}
				\begin{split}\label{vm1}
					\int_{0}^{1}rJ^2_m(k_{m,s}r) dr
					\geqslant& \int_{\Xi }^{1}rJ^2_m(k_{m,s}r) dr=\frac{2(1+o(1))^2}{\pi}\int_{\Xi}^{1}\frac{r\cos^2 x_m}{\left( k^2_{m,s}r^2-m^2\right)^{1/2}}dr\\
					\sim&\frac{1}{\pi}\int_{\Xi}^{1}\frac{r}{\left( k^2_{m,s}r^2-m^2\right)^{1/2}} dr\\
					\sim&\frac{1}{\pi k^2_{m,s}}\Big(\sqrt{ k_{m,s}^2-m^2}-\sqrt{j_{m,1}'^2-m^2}\Big).
			\end{split}\end{equation}
			where $x_m=\sqrt{{(\tau k_{m,s}r)}^2-m^2}-\frac{m\pi}{2}+m\arcsin(\frac{m}{\tau k_{m,s}r})-\frac{\pi}{4} $.

			Combining (\ref{vmeps1}) and  (\ref{vm1}), there holds
			\begin{equation}\begin{split}\label{compare1}
					\frac{\|v_m\|^2_{L^2(\Omega_{\epsilon})}}{\|v_m\|^2_{L^2(\Omega)}}\leqslant&Cm^{\frac{1}{3}}\left(1-\epsilon^2\right)^{-\frac{1}{2}}\left(\frac{\epsilon e^{\sqrt{1-\epsilon^2}}}{1+\sqrt{1-\epsilon^2}} \right)^{2m} 
			\end{split}\end{equation} 
			where $C$ is a constant. When $m$ is sufficiently large, let $ m\to\infty $ one has (\ref{eq:th:sur3}). }
		
		The proof is complete.	
	\end{proof}

	\begin{thm}\label{th2.5}
		Consider (\ref{eq:syst3}), where $ \Omega $ is the unit disk in $ \mathbb{R}^2 $ and $ \Omega_{\epsilon} $ is given in (\ref{eq:Omega_tau}).  If the physical parameter $\tau$ in  \eqref{eq:syst3} fulfilling $ \tau\in(0, 1) $, then for any $ \epsilon\in(0,1) $, there exists eigenfunctions $ \mathbf{u}_m $, $m\in\mathbb{N}$, associated with transmission  eigenvalues $ k_{m,s} $ described in Lemma \ref{lem2.1} such that 
		\begin{equation}\label{eq:th:sur4}
			\lim_{ m \to \infty}\frac{\|\mathbf{u}_m\|_{L^2(\Omega_{\epsilon})}}{\|\mathbf{u}_m\|_{L^2(\Omega)}}=0.
		\end{equation}
	\end{thm}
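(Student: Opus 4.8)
The plan is to follow the scheme of the proof of Theorem~\ref{th2.4}, but now tracking the elastic component of the transmission eigenfunction. By \eqref{eq:u,v}, the elastic mode associated with the eigenvalue $k_{m,s}$ of Lemma~\ref{lem2.1} is
\[
\mathbf{u}_m(\mathbf{x})=\alpha_m\Bigl(k_pJ'_m(k_p|\mathbf{x}|)\,\hat{r}+\frac{\mathrm{i}m}{|\mathbf{x}|}J_m(k_p|\mathbf{x}|)\,\hat{\theta}\Bigr)e^{\mathrm{i}m\theta},\qquad k_p=\tau k_{m,s},\ \ \alpha_m\neq 0,
\]
and, since $\hat{r},\hat{\theta}$ is a real orthonormal frame, $|\mathbf{u}_m(\mathbf{x})|^2=|\alpha_m|^2\bigl(k_p^2J'_m(k_pr)^2+\tfrac{m^2}{r^2}J_m(k_pr)^2\bigr)$ with $r=|\mathbf{x}|$, so that
\[
\|\mathbf{u}_m\|^2_{L^2(\Omega_t)}=2\pi|\alpha_m|^2\int_0^t\Bigl(k_p^2J'_m(k_pr)^2+\frac{m^2}{r^2}J_m(k_pr)^2\Bigr)r\,dr\qquad(t=\epsilon\ \text{or}\ t=1).
\]
The first thing I would record is the structural fact that drives the argument: since $\tau<1$, the argument $k_pr=\tau k_{m,s}r$ stays in the evanescent range of $J_m$. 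Indeed, Lemma~\ref{lem2.1} gives $k_{m,s}/m\to1$, hence $k_p/m\to\tau<1$, so by \eqref{eq:jpm1} one has $k_pr<k_p<j'_{m,1}$ for every $r\in(0,1]$ once $m$ is large; moreover $k_p=\tau k_{m,s}<\tau j_{m,2}<j_{m,1}$ by the hypothesis of Lemma~\ref{lem2.1} and \eqref{eq:jm root}. Consequently $J_m(k_pr)$ is positive and strictly increasing in $r$ on $(0,1)$ — it never enters the oscillatory regime, in contrast with the acoustic eigenfunction $v_m$ — and the Debye expansion \eqref{asy1} applies throughout with $z=k_pr/m\to\tau r<1$. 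This is exactly why the elastic mode will be boundary-localized.

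Next I would collapse both norms onto the single weighted integral $\int r^{-1}J_m(k_pr)^2\,dr$. From the recurrence $xJ'_m(x)=mJ_m(x)-xJ_{m+1}(x)$ and $J_{m+1}(x)\ge 0$ (valid since $k_p<j_{m+1,1}$), one gets $0\le k_prJ'_m(k_pr)\le mJ_m(k_pr)$, hence $k_p^2J'_m(k_pr)^2\,r\le\tfrac{m^2}{r}J_m(k_pr)^2$, so the whole integrand above is pinched between $\tfrac{m^2}{r}J_m(k_pr)^2$ and $\tfrac{2m^2}{r}J_m(k_pr)^2$. For the crucial upper bound on $\int_0^\epsilon\tfrac{m^2}{r}J_m(k_pr)^2\,dr$ near the singular weight at $r=0$, I would use the sharper lower bound $k_prJ'_m(k_pr)\ge(m-k_pr)J_m(k_pr)$, which follows from the same recurrence together with $J_{m+1}(x)<J_m(x)$ for $0<x\le m+1$ (e.g., from the continued-fraction representation of $J_{m+1}/J_m$): fixing $\sigma\in(\tau,1)$, for large $m$ one has $m-k_pr\ge(1-\sigma)m$, so $\tfrac{d}{dr}J_m(k_pr)^2=2k_pJ'_m(k_pr)J_m(k_pr)\ge\tfrac{2(1-\sigma)m}{r}J_m(k_pr)^2$, whence
\[
\int_0^\epsilon\frac{m^2}{r}J_m(k_pr)^2\,dr\le\frac{m}{2(1-\sigma)}\int_0^\epsilon\frac{d}{dr}J_m(k_pr)^2\,dr=\frac{m}{2(1-\sigma)}J_m(k_p\epsilon)^2 .
\]
For the lower bound on the denominator I would restrict to an annulus $(\Xi,1)$ with a fixed $\Xi\in(\epsilon,1)$ (say $\Xi=(1+\epsilon)/2$) and use $r^{-1}\ge1$ together with monotonicity of $J_m(k_pr)$, giving $\int_0^1\tfrac{m^2}{r}J_m(k_pr)^2\,dr\ge m^2(1-\Xi)J_m(k_p\Xi)^2$. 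Combining these,
\[
\frac{\|\mathbf{u}_m\|^2_{L^2(\Omega_\epsilon)}}{\|\mathbf{u}_m\|^2_{L^2(\Omega)}}\le\frac{C}{m}\cdot\frac{J_m(k_p\epsilon)^2}{J_m(k_p\Xi)^2},
\]
with $C=C(\tau,\epsilon)$ independent of $m$.

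Finally I would insert \eqref{asy1} (applied with the spatial variable $\tau r$, so that its argument becomes $k_pr$, which is legitimate since $\tau r<1$) at $r=\epsilon$ and $r=\Xi$, with $z_1=k_p\epsilon/m\to\tau\epsilon$ and $z_2=k_p\Xi/m\to\tau\Xi$, both in $(0,1)$ and with $z_1/z_2=\epsilon/\Xi<1$. Writing $g(s):=\dfrac{s\,e^{\sqrt{1-s^2}}}{1+\sqrt{1-s^2}}$, a one-line computation gives $\tfrac{d}{ds}\log g(s)=\tfrac{\sqrt{1-s^2}}{s}>0$, so $g$ is strictly increasing on $(0,1)$ with values in $(0,1)$; hence
\[
\frac{J_m(k_p\epsilon)^2}{J_m(k_p\Xi)^2}=\frac{(1-z_2^2)^{1/2}}{(1-z_1^2)^{1/2}}\left(\frac{g(z_1)}{g(z_2)}\right)^{2m}\bigl(1+o(1)\bigr),
\]
and since $g(z_1)/g(z_2)<1$ (indeed $\to g(\tau\epsilon)/g(\tau\Xi)<1$) while the algebraic prefactor stays bounded, the right-hand side decays geometrically in $m$. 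Therefore the ratio of squared norms tends to $0$, and taking square roots yields \eqref{eq:th:sur4}. I expect the only genuinely delicate step to be the control of $\int_0^\epsilon\tfrac{m^2}{r}J_m(k_pr)^2\,dr$ near $r=0$, where the $r^{-1}$ weight is singular: this is precisely where the inequality $J_{m+1}(x)<J_m(x)$ (equivalently the lower bound $k_prJ'_m(k_pr)\ge(m-k_pr)J_m(k_pr)$) is used, and ultimately where $\tau<1$ enters — it keeps $k_pr$ uniformly away from the turning point $m$ of $J_m$, making $\tfrac{d}{dr}J_m(k_pr)^2$ comparable to $\tfrac{m}{r}J_m(k_pr)^2$.
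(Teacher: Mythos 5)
Your proof is correct, and it reaches the same conclusion by the same underlying mechanism as the paper: because $\tau<1$, the argument $\tau k_{m,s}r$ of the Bessel functions in $\mathbf{u}_m$ stays uniformly below the turning point $m$ (indeed below $j'_{m,1}$), so one is always in the monotone, exponentially graded regime where the ratio of the auxiliary function $\phi(x)=\frac{xe^{\sqrt{1-x^2}}}{1+\sqrt{1-x^2}}$ of \eqref{eq:phi def} (your $g$) at two separated points, raised to the power $2m$, decays geometrically. The technical execution differs in two respects. First, the paper estimates the numerator and denominator directly from the pointwise Olver bounds on $|J_m(mx)|$ and $|J'_m(mx)|$ from \cite{FWJO10} together with the integral mean value theorem, producing the intermediate points $\eta_1\in[0,\epsilon]$ and $\eta_2\in[\Xi,1]$ in \eqref{int1}--\eqref{int2}; you instead use the recurrence $xJ'_m(x)=mJ_m(x)-xJ_{m+1}(x)$ to pinch the full integrand between $\frac{m^2}{r}J_m^2$ and $\frac{2m^2}{r}J_m^2$, and you tame the singular weight $r^{-1}$ at the origin by the differential inequality $\frac{d}{dr}J_m(k_pr)^2\geq \frac{2(1-\sigma)m}{r}J_m(k_pr)^2$ (which rests on $J_{m+1}(x)<J_m(x)$ in the relevant range; this is valid since $k_pr\leq\sigma m<m+\tfrac12$ and the ratio $J_{m+1}/J_m$ cannot cross $1$ there). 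Second, your annulus $(\Xi,1)$ with $\Xi=(1+\epsilon)/2$ is fixed, whereas the paper's $\Xi=j'_{m,1}/k_{m,s}$ from \eqref{r1} tends to $1$, so its factor $1-\Xi$ shrinks like $m^{-2/3}$ and must be absorbed into the polynomial prefactor of \eqref{compare2}; your choice avoids this and yields a cleaner bound $\frac{C}{m}\bigl(\phi(\tau\epsilon)/\phi(\tau\Xi)\bigr)^{2m}(1+o(1))$ with an explicit, $m$-independent contraction ratio, at the price of invoking the additional (but standard and correctly justified) monotonicity facts about $J_m$ and $J_{m+1}$. Both arguments are sound; yours is arguably tighter in the bookkeeping, and it makes more transparent exactly where the hypothesis $\tau<1$ enters.
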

	
	\begin{proof} Suppose that the AE transmission eigenfunction ${\mathbf u}_m$ associated with the transmission eigenvalue $k_{m,s}$ is given in the form (\ref{eq:u,v}), where $k_{m,s}$ satisfies  \eqref{kms}.
		By direct calculations and \eqref{eq:k_p}, it yields that
		\begin{subequations}
			\begin{align}
				\|\mathbf{u}_m\|_{L^2(\Omega)} \label{um1}
				&=2\pi|\alpha_m|^2\int_{0}^{1}\left( k_p^2rJ_m'^{2}(k_pr)+\frac{m^2}{r}J_m^2(k_pr)\right)dr\\
				&=2\pi|\alpha_m|^2\int_{0}^{1}\left( \tau^2 k^2_{m,s}rJ_m'^{2}(\tau k_{m,s}r)+\frac{m^2}{r}J_m^2(\tau k_{m,s}r)\right) dr, \notag  \\
				\label{um2}
				\|\mathbf{u}_m\|_{L^2(\Omega_\epsilon)}&=2\pi|\alpha_m|^2\int_{0}^{\epsilon} \left(\tau^2 k^2_{m,s}rJ_m'^{2}(\tau k_{m,s}r)+\frac{m^2}{r}J_m^2(\tau k_{m,s}r) \right) dr.
			\end{align}
		\end{subequations} 
		From \cite{FWJO10}, one has
		\begin{subequations}
			\begin{align}
				\label{J'm}
				|J'_m(mx)|&\leqslant\frac{\left(1+x^2\right)^{1/4}}{x\left(2\pi m\right)^{1/2}}\frac{x^me^{m\sqrt{1-x^2}}}{\left(1+\sqrt{1-x^2}\right)^m},\quad \mbox{for}\quad  m>0\quad  \mbox{and}\quad 0<x\leqslant 1,\\
				|J_m(mx)|&\leqslant\frac{x^me^{m\sqrt{1-x^2}}}{\left(1+\sqrt{1-x^2}\right)^m},\quad \mbox{for}\quad m\geqslant0\quad \mbox{and} \quad 0<x\leqslant1.	\label{Jm(mx)}	
			\end{align}
		\end{subequations}
		Under the assumption $\tau\leqslant 1$, by virtue  of \eqref{con2}, if $m$ is sufficient large, it yields that
		$$
		\frac{\tau k_{m,s} r }{m} <1, \quad \forall r\in [0,\epsilon]. 
		$$
		Therefore, due to (\ref{J'm}) and (\ref{Jm(mx)}), after tedious calculations  we can derive that
		\begin{align}\label{int1}
			&\int_{0}^{\epsilon}\left( \tau^2 k^2_{m,s}rJ_m'^{2}(\tau k_{m,s}r)+\frac{m^2}{r}J_m^2(\tau k_{m,s}r)\right) dr\\
			\leqslant&\int_{0}^{\epsilon}\left(\frac{\sqrt{1+(\frac{\tau k_{m,s}r}{m})^{2}}}{2\pi}+m\right) \left(\frac{\tau k_{m,s}e^{\sqrt{1-(\frac{\tau k_{m,s}r}{m})^2}}}{1+\sqrt{1-(\frac{\tau k_{m,s}r}{m})^2}}\right)\left(\frac{\tau k_{m,s}r}{m}\frac{e^{\sqrt{1-(\frac{\tau k_{m,s}r}{m})^2}}}{1+\sqrt{1-(\frac{\tau k_{m,s}r}{m})^2}}\right)^{2m-1}dr\notag \\
			=&\epsilon\cdot\left(\frac{\sqrt{1+(\frac{\tau k_{m,s}\eta_1}{m})^{2}}}{2\pi}+m\right) \left(\frac{\tau k_{m,s}e^{\sqrt{1-(\frac{\tau k_{m,s}\eta_1}{m})^2}}}{1+\sqrt{1-(\frac{\tau k_{m,s}\eta_1}{m})^2}}\right)\left(\frac{\tau k_{m,s}\eta_1}{m}\frac{e^{\sqrt{1-(\frac{\tau k_{m,s}\eta_1}{m})^2}}}{1+\sqrt{1-(\frac{\tau k_{m,s}\eta_1}{m})^2}}\right)^{2m-1} \notag
		\end{align}
		where $ \eta_1\in[0,\epsilon] $.
		
		 When  $\tau < 1$, recall that $ \Xi  $ is given in (\ref{r1}), using  \eqref{asy1} we can obtain the following inequality 
		\begin{equation}\begin{split}\label{int2}
				&\int_{0}^{1}\left( \tau^2 k^2_{m,s}rJ_m'^{2}(\tau k_{m,s}r)+\frac{m^2}{r}J_m^2(\tau k_{m,s}r)\right) dr\\
				\geqslant
				&\int_{\Xi }^{1} \frac{m^2}{r}\frac{1}{2\pi m \sqrt{1-(\frac{\tau k_{m,s}r}{m})^2}}\left(\frac{\tau k_{m,s}r}{m}\frac{e^{\sqrt{1-(\frac{\tau k_{m,s}r}{m})^2}}}{1+\sqrt{1-(\frac{\tau k_{m,s}r}{m})^2}}\right)^{2m}dr\cdot(1+o(1))^2\\
				\geqslant&\int_{\Xi }^{1}\frac{1}{2\pi  \sqrt{1-(\frac{\tau k_{m,s}r}{m})^2}}\left(\frac{\tau k_{m,s}e^{\sqrt{1-(\frac{\tau k_{m,s}r}{m})^2}}}{1+\sqrt{1-(\frac{\tau k_{m,s}r}{m})^2}}\right)\left(\frac{\tau k_{m,s}r}{m}\frac{e^{\sqrt{1-(\frac{\tau k_{m,s}r}{m})^2}}}{1+\sqrt{1-(\frac{\tau k_{m,s}r}{m})^2}}\right)^{2m-1}dr\\
				=&\frac{1-\Xi }{2\pi  \sqrt{1-(\frac{\tau k_{m,s}\eta_2}{m})^2}}\left(\frac{\tau k_{m,s}e^{\sqrt{1-(\frac{\tau k_{m,s}\eta_2}{m})^2}}}{1+\sqrt{1-(\frac{\tau k_{m,s}\eta_2}{m})^2}}\right)\left(\frac{\tau k_{m,s}\eta_2}{m}\frac{e^{\sqrt{1-(\frac{\tau k_{m,s}\eta_2}{m})^2}}}{1+\sqrt{1-(\frac{\tau k_{m,s}\eta_2}{m})^2}}\right)^{2m-1},
			\end{split}
		\end{equation} { where $ \eta_2\in[r_1,1] $ and $ \eta_2\to 1 $ as $m\to\infty  $.
			
			Now, we introduce the auxiliary function 
			\begin{align}\label{eq:phi def}
				\phi(x)=\frac{xe^{\sqrt{1-x^2}}}{1+\sqrt{1-x^2}}, 
			\end{align}
			which is monotonically increasing for $ x\in (0,1) $. Noting that $ \lim_{ m \to \infty}\left(\frac{\tau k_{m,s}\eta_2}{m}-\frac{\tau k_{m,s}\eta_1}{m}\right)=\tau (1-\eta_1)>0 $. Therefore there exists $ \delta(\eta_1)\in (0,1) $ such that
			\begin{equation}\label{phi1}
				0<\frac{\phi\left(\frac{\tau  k_{m,s}\eta_1}{m}\right)}{\phi\left( \frac{\tau k_{m,s}\eta_2}{m}\right)}<1-\delta(\eta_1)
			\end{equation}
			for sufficiently large $m$.   Combining  (\ref{um1}), (\ref{um2}), (\ref{int1}) with (\ref{int2}), there holds
			\begin{equation}\label{compare2}
				\frac{\|\mathbf{u}_m\|_{L^2(\Omega_\epsilon)}}{\|\mathbf{u}_m\|_{L^2(\Omega)}}\leqslant C\epsilon m^{\frac{5}{3}}(1-\delta(\eta_1))^{2m-1},
		\end{equation} }
		where $ C $ is a constant.
		Finally, letting  $ m\to\infty $ one has (\ref{eq:th:sur4}).
		\medskip

		The proof is complete.
	\end{proof}

	\subsection{Three-dimensional results}
	In this subsection, we mainly prove the results of boundary-localization of transmission eigenfunctions to \eqref{eq:syst3} in $\mathbb R^3$.  Using Fourier expansion, the acoustic-elastic transmission eigenfunctions  to  \eqref{eq:syst3} associated with the transmission eigenvalues $k\in \mathbb R_+$ have series expansions as follow (cf.\cite{DR95,CDLZ21,CDLZ22,ColtonKress}):
	{	\begin{align}
			\mathbf{u}
			=\sum_{m=0}^{\infty}\sum_{l=-m}^{m}&\Big[\alpha_m^lj'_m(k_p|\mathbf{x}|)e^{\mathrm{i}m\varphi}P^{|l|}_m(\cos \theta)\cdot\hat{r}+\alpha_m^l\frac{j_m(k_p|\mathbf{x}|)}{k_p |\mathbf{x}|}e^{\mathrm{i}m\varphi}\frac{dP^{|l|}_m(\cos \theta)}{d\theta}\cdot\hat{\theta}\notag\\
			&+\alpha_m^l\frac{j_m(k_p|\mathbf{x}|)}{k_p |\mathbf{x}|}\frac{\mathrm{i}m}{\sin\theta}e^{\mathrm{i}m\varphi}P^{|l|}_m(\cos \theta)\cdot\hat{\varphi}\Big]\notag\\
			+\sum_{m=1}^{\infty}\sum_{l=-m}^{m}&\Big[\delta_m^lj_m(k_s|\mathbf{x}|)\frac{\mathrm{i}m}{\sin\theta}e^{\mathrm{i}m\varphi}P^{|l|}_m(\cos \theta)\cdot\hat{\theta}-\delta_m^lj_m(k_s|\mathbf{x}|)e^{\mathrm{i}m\varphi}\frac{dP^{|l|}_m(\cos \theta)}{d\theta}\cdot\hat{\varphi}\Big]\notag\\
			+\sum_{m=1}^{\infty}\sum_{l=-m}^{m}&\Big[\gamma_m^l \frac{j_m(k_s|\mathbf{x}|)}{k_s|\mathbf{x}|}e^{\mathrm{i}m\varphi}P^{|l|}_m(\cos \theta)\cdot\hat{r}\notag\\
			&+\gamma_m^l\Big(j'(k_s|\mathbf{x}|)+\frac{j_m(k_s|\mathbf{x}|)}{k_s|\mathbf{x}|}\Big)\Big(e^{\mathrm{i}m\varphi}\frac{dP^{|l|}_m(\cos\theta)}{d\theta}\cdot\hat{\theta}+\frac{\mathrm{i}m}{\sin\theta}e^{\mathrm{i}m\varphi}P^{|l|}_m(\cos \theta)\cdot\hat{\varphi}\Big)\Big],\notag\\
			v=\sum_{m=0}^{\infty}\sum_{l=-m}^{m}&\mathrm{i}^{m}\beta_m^l j_m(k|\mathbf{x}|)Y^l_m(\hat{r})=\sum_{m=0}^{\infty}\sum_{l=-m}^{m}\sqrt{\dfrac{2n+1}{4\pi}\dfrac{(m-|l|)!}{(m+|l|)!}}\mathrm{i}^{m}\beta_m^l j_m(k|\mathbf{x}|)e^{\mathrm{i}m\varphi}P^{|l|}_m(\cos \theta),\label{eq:3du}
	\end{align}}
	where $\alpha_m^l$,  $\beta_m^l$,  $\delta_m^l$ and $\gamma_m^l$ are complex constants, 
	\begin{equation}\label{3d:hat}
		\hat{r}:=\left(\begin{array}{c}
			\sin\theta\cos\varphi \\
			\sin\theta\sin\varphi \\
			\cos\theta
		\end{array}
		\right),\quad  \hat{\theta}:=\left(\begin{array}{c}
			\cos\theta\cos\varphi \\
			\cos\theta\sin\varphi \\
			-\sin\theta
		\end{array}
		\right), \quad \hat{\varphi}:=\left(\begin{array}{c}
			-\sin\varphi \\
			\cos\varphi \\
			0
		\end{array}
		\right),
	\end{equation}
	and $ P^{|l|}_m $ is the associated Legendre  function of degree $ m $ and order $ l $. The spherical Bessel function can be characterized by
	\begin{equation}\label{jandJ}
		j_m(|\mathbf{x}|)=\sqrt{\frac{\pi}{2|\mathbf{x}|}}J_{m+1/2}(|\mathbf{x}|). 
	\end{equation}

	Similar to Lemma  \ref{lem2.1}, in the following lemma we prove that there exists a sequence of discrete  transmission eigenvalues of \eqref{eq:syst3}, where  infinity is the  only accumulation point of this sequence. 
	\begin{lem}\label{lemma2.2}
		Consider the acoustic-elastic transmission eigenvalue  problem   \eqref{eq:syst3}. Let $ \Omega\subset \mathbb{R}^3 $ be the unit ball and $ \tau $ is chosen such that $\tau<j_{m+\frac{1}{2},1}/ j_{m+\frac{1}{2},2}<1$.  Let $ k_{m,\ell}, \ell=1,2,\ldots, $ be the transmission eigenvalues of (\ref{eq:syst3}), where $ m $ is the order of the spherical Bessel function  and $\ell $ denotes the $\ell$th eigenvalue for a fixed $m$. Then there exists a subsequence of $ \{k_{m,\ell}\} $, denoted by $ \{k_{m,s(m)}\} $, such that for $ m $
		sufficiently  large, it holds that 
		\begin{equation}\label{3d:kms}
			k_{m,s}\in \left(j_{m+\frac{1}{2},1},\ j_{m+\frac{1}{2},2}\right),
		\end{equation}	
		Furthermore, it yields that
		\begin{equation}\label{3d:con1}
			\frac{	k_{m,s}}{m+\frac{1}{2}}\to 1,\quad \mbox{as}\quad m\to+\infty.
		\end{equation}
		More specifically,
		\begin{equation}\label{3d:con2}
			k_{m,s}=\left(m+\frac{1}{2}\right)\left(1+C_0\left(m+\frac{1}{2}\right)^{-\frac{2}{3}}+o((m+\frac{1}{2})^{-\frac{2}{3}})\right),
		\end{equation} 
		where $ C_0 $ is a constant independent  of $m$.
	\end{lem}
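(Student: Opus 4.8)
The plan is to follow the same strategy as in the proof of Lemma~\ref{lem2.1}, replacing the planar Fourier/Bessel analysis by the spherical-harmonic/spherical-Bessel analysis underlying \eqref{eq:3du}. First I would restrict attention to one coupled mode: for a fixed degree $m$ and order $l$, set $\delta_m^l=\gamma_m^l=0$ and retain only the $p$-part of $\mathbf{u}$ together with the scalar field, so that
\[
\mathbf{u}_m(\mathbf{x})=\alpha_m^l\Big(j_m'(k_p|\mathbf{x}|)\,Y_m^l(\hat r)\,\hat r+\cdots\Big),\qquad v_m(\mathbf{x})=\beta_m^l\, j_m(k|\mathbf{x}|)\,Y_m^l(\hat r),
\]
which automatically satisfies the two interior equations of \eqref{eq:syst3} in the unit ball; here $k_p=\tau k$ by \eqref{eq:k_p}.

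Imposing the first transmission condition $\mathbf{u}_m\cdot\nu-k^{-2}\partial_r v_m=0$ on $|\mathbf{x}|=1$ couples $\beta_m^l$ to $\alpha_m^l$ through $k_p j_m'(k_p)$ and $k^{-1}j_m'(k)$, the exact analogue of \eqref{betam}. Then, writing the traction operator $T_\nu$ in spherical coordinates and applying it to the $p$-part of $\mathbf{u}_m$, the second transmission condition $T_\nu\mathbf{u}_m+\delta\tau^2 v_m\nu=0$ splits --- after using the differential equation for $P_m^{|l|}(\cos\theta)$ to eliminate the angular dependence --- into a tangential identity that plays the role of \eqref{rela1} and a radial identity that plays the role of \eqref{rela1.5}. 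Substituting the tangential identity into the radial one, and converting spherical Bessel functions through \eqref{jandJ}, $j_m(t)=\sqrt{\pi/2t}\,J_{m+1/2}(t)$, together with the recurrence $J_{\nu}'(t)=J_{\nu-1}(t)-\tfrac{\nu}{t}J_\nu(t)$, reduces the problem to a single transcendental equation $f_m(k)=0$ whose structure mirrors \eqref{rela3} with $J_{m+1/2}$ and $J_{m-1/2}$ in place of $J_m$ and $J_{m-1}$.

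The second step is the sign analysis. Evaluating $f_m$ at $j_{m+\frac{1}{2},1}$ and $j_{m+\frac{1}{2},2}$ annihilates the terms carrying $J_{m+1/2}(k)$ and leaves a product of: (i) two factors quadratic in $m$ of the form $-\tau^2 j_{m+\frac{1}{2},s}^2+2\mu\,(\text{positive quadratic in }m)$, both positive for large $m$ by the asymptotics of $j_{m+\frac{1}{2},s}$; (ii) $J_{m-1/2}(j_{m+\frac{1}{2},1})J_{m-1/2}(j_{m+\frac{1}{2},2})$, which is negative because the positive zeros of $J_{m-1/2}$ interlace those of $J_{m+1/2}$; and (iii) $J_{m+1/2}(\tau j_{m+\frac{1}{2},1})J_{m+1/2}(\tau j_{m+\frac{1}{2},2})$, which is positive since the hypothesis $\tau<j_{m+\frac{1}{2},1}/j_{m+\frac{1}{2},2}$ places both arguments inside $(0,j_{m+\frac{1}{2},1})$ where $J_{m+1/2}>0$. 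Hence $f_m(j_{m+\frac{1}{2},1})f_m(j_{m+\frac{1}{2},2})<0$ for all sufficiently large $m$, and the intermediate value theorem produces a root $k_{m,s}\in(j_{m+\frac{1}{2},1},j_{m+\frac{1}{2},2})$, which is \eqref{3d:kms}; exactly as in Remark~\ref{rem:as18}, this root also solves the tangential identity in the asymptotic sense, so $(\mathbf{u}_m,v_m)$ is a genuine AE transmission eigenfunction in the limit. Finally, \eqref{3d:con1} and \eqref{3d:con2} follow by inserting the bounds \eqref{eq:jms up} applied with order $m+\tfrac12$ (and the Airy-zero asymptotics $a_s\sim-[3\pi(4s-1)/8]^{2/3}$) into $j_{m+\frac{1}{2},1}<k_{m,s}<j_{m+\frac{1}{2},2}$ and squeezing, just as in \eqref{j_m}--\eqref{con2}.

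I expect the main obstacle to be the bookkeeping in the reduction step: the traction operator in $\mathbb{R}^3$ applied to a vector spherical harmonic generates several terms involving $P_m^{|l|}(\cos\theta)$, its $\theta$-derivative and factors of $1/\sin\theta$, and one must verify that, after using the Legendre equation (which introduces the eigenvalue $m(m+1)$) and the spherical-Bessel identities for $j_m''$ in terms of $j_m$ and $j_m'$, the angular parts cancel consistently so that the transmission conditions genuinely collapse to the two scalar identities; the remaining estimates are then a direct transcription of the two-dimensional argument.
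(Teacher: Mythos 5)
Your proposal follows essentially the same route as the paper's proof: restrict to a single $p$-mode, obtain $\beta_m^l$ from the first transmission condition, split the traction condition via the linear independence of $\hat r,\hat\theta,\hat\varphi$ into the tangential identity \eqref{3d:rel1} and the radial identity \eqref{3d:rel1.5}, substitute one into the other, convert to half-integer-order Bessel functions via \eqref{jandJ}, and locate a sign change of the resulting function between $j_{m+\frac12,1}$ and $j_{m+\frac12,2}$ using interlacing and the hypothesis $\tau<j_{m+\frac12,1}/j_{m+\frac12,2}$, exactly as in Lemma~\ref{lem2.1}. Two harmless discrepancies: the paper's choice of recurrence produces $J_{m+\frac32}$ rather than $J_{m-\frac12}$ (interlacing yields the required negative product either way), and your assertion that the two quadratic-in-$m$ factors are both \emph{positive} is not needed and can fail for some $\mu,\tau$ --- what matters, and what always holds for large $m$, is that their product is positive because both factors share the same leading behavior.
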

	\begin{proof}
		Let  $ m\in\mathbb{N}_+ $ be fixed.  		
		For any nonzero constants $ \alpha_m^l $ and $ \beta_m^l $, it can be verified that
		\begin{equation}\label{3deq:u,v}
			\begin{split}
				\mathbf{u}_m(\mathbf{x})=&\alpha_m^lj'_m(k_p|\mathbf{x}|)e^{\mathrm{i}m\varphi}P^{|l|}_m(\cos \theta)\cdot\hat{r}+\alpha_m^l\frac{j_m(k_p|\mathbf{x}|)}{k_p |\mathbf{x}|}e^{\mathrm{i}m\varphi}\frac{dP^{|l|}_m(\cos \theta)}{d\theta}\cdot\hat{\theta}\\
				&+\alpha_m^l\frac{j_m(k_p|\mathbf{x}|)}{k_p |\mathbf{x}|}\frac{\mathrm{i}m}{\sin\theta}e^{\mathrm{i}m\varphi}P^{|l|}_m(\cos \theta)\cdot\hat{\varphi},\\
				v_m(\mathbf{x})=&\beta_m^l j_m(k|\mathbf{x}|)e^{\mathrm{i}m\varphi}P^{|l|}_m(\cos \theta),
			\end{split}
		\end{equation}
		are  solutions to $ \mathcal{L}_{\lambda, \mu}\bmf {u}(\mathbf{x}) + k^2\tau^2\bmf {u}(\mathbf{x})=\mathbf  0$ and $ \Delta v(\mathbf{x})+k^2v(\mathbf{x})=0$  in $ \Omega $. 
		Using the transmission conditions of (\ref{eq:syst3}) on $ \partial\Omega $, we have 
		\begin{equation}\label{3d:bound1}
			\begin{split}
				\bmf {u}_m(\mathbf{x})\cdot \nu-\frac{1}{k^2}\nabla v_m(\mathbf{x})\cdot \nu &=\bmf {u}_m\cdot\hat{r}-\frac{1}{k^2}\frac{\partial v_m}{\partial r}\\
				& = \alpha_m^l   j'_m(k_pr)e^{\mathrm{i}m\theta}P^{|l|}_m-\frac{1}{k}\beta_m^l  j'_m(kr)e^{\mathrm{i}m\theta}P^{|l|}_m=0,
			\end{split}
		\end{equation}
		where $ r=|\mathbf{x}| $. Since $ r=1 $, it yields that
		\begin{equation}\label{3d:betam}
			\beta_m^l=\alpha_m^l  k\frac{j'_m(k_p)}{j'_m(k)}.
		\end{equation}
		it is obvious that $\alpha_m^l$ and  $\beta_m^l$ are not zero by contradiction. From direct calculations, we can obtain that 
		\begin{align}
			\nabla\cdot\bmf{u}_m
			=&\alpha_m^lk_p j''_m(k_p r)e^{\mathrm{i}m\varphi}P^{|l|}_m+2\alpha_m^l\frac{j'_m( r)}{k_p r}e^{\mathrm{i}m\varphi}P^{|l|}_m+\alpha_m^l\frac{j_m(k_p r)}{k_p r^2}e^{\mathrm{i}m\varphi}\frac{d^2 P^{|l|}_m}{d\theta^2}\notag \\
			&+\alpha_m^l\frac{j_m(k_p r)}{k_p r^2}\frac{\cos\theta}{\sin\theta}e^{\mathrm{i}m\varphi}\frac{dP^{|l|}_m}{d\theta}-\alpha_m^l\frac{j_m(k_p r)}{k_p r^2}\frac{m^2}{\sin^2\theta}e^{\mathrm{i}m\varphi} P^{|l|}_m , \label{3d:grad u}
			\\
			\nabla^s\bmf{u}_m\cdot\hat{r}=&\frac{1}{2}\left(\nabla\bmf{u}_m+\nabla\bmf{u}_m^t\right)\cdot\hat{r}=\alpha_m^lk_p j''_m(k_p r)e^{\mathrm{i}m\varphi}P^{|l|}_m\cdot\hat{r} \label{3d:grad2} \\
			&+\Big(\alpha_m^l\frac{j'_m(k_p r)}{k_p r}e^{\mathrm{i}m\varphi}\frac{dP^{|l|}_m}{d\theta}-\alpha_m^l\frac{j_m(k_p r)}{k_p r^2}e^{\mathrm{i}m\varphi}\frac{dP^{|l|}_m}{d\theta}\Big)\cdot\hat \theta \notag \\
			&+\Big(\alpha_m^l\frac{\mathrm{i}m}{r\sin\theta}j'_m{k_p r}e^{\mathrm{i}m\varphi}P^{|l|}_m-\alpha_m^l\frac{j_m(k_p r)}{k_p r^2}\frac{\mathrm{i}m}{r\sin\theta}e^{\mathrm{i}m\varphi}P^{|l|}_m\Big)\cdot\hat{\varphi}. \notag
		\end{align}		
		

		By using the transmission conditions of (\ref{eq:syst3}) on $ \partial\Omega $, and combining with (\ref{3deq:u,v}), (\ref{3d:grad u}) and (\ref{3d:grad2}), we can deduce that 
		\begin{align}
			&\left(\lambda(\nabla\cdot\bmf{u}_m)I_3+2\mu	\nabla^s\bmf{u}_m+\delta\tau^2 v_m I_3\right)\cdot\hat{r}\notag \\
			=&\lambda\Big[\alpha_m^lk_p j''_m(k_p )e^{\mathrm{i}m\varphi}P^{|l|}_m+2\alpha_m^lj'_m(k_p )e^{\mathrm{i}m\varphi}P^{|l|}_m+\alpha_m^l\frac{j_m(k_p )}{k_p }e^{\mathrm{i}m\varphi}\frac{d^2 P^{|l|}_m}{d\theta^2}+\alpha_m^l\frac{j_m(k_p )}{k_p}\frac{\cos\theta}{\sin\theta}e^{\mathrm{i}m\varphi}\notag \\
			&\times \frac{dP^{|l|}_m}{d\theta}-\alpha_m^l\frac{j_m(k_p )}{k_p }\frac{m^2}{\sin^2\theta}e^{\mathrm{i}m\varphi} P^{|l|}_m \Big]\cdot\hat{r}+2\mu\alpha_m^lk_p j''_m(k_p r)e^{\mathrm{i}m\varphi}P^{|l|}_m\cdot\hat{r}+2\mu\Big[\alpha_m^l  j'_m(k_p )e^{\mathrm{i}m\varphi} \notag \\
			&\times \frac{dP^{|l|}_m}{d \theta}-\alpha_m^l\frac{j_m(k_p )}{k_p }e^{\mathrm{i}m\varphi}\frac{dP^{|l|}_m}{d\theta}\Big]\cdot\hat{\theta}+2\mu\Big[\alpha_m^l\frac{\mathrm{i}m}{\sin\theta}j'_m({k_p })e^{\mathrm{i}m\varphi}P^{|l|}_m-\alpha_m^l\frac{j_m(k_p )}{k_p }\frac{\mathrm{i}m}{ \sin\theta}e^{\mathrm{i}m\varphi} \notag \\
			&\times P^{|l|}_m\Big]\cdot\hat{\varphi}+\delta\tau^2\beta_m^lj_m(k)e^{\mathrm{i}m\varphi}P^{|l|}_m\cdot\hat{r}=\mathbf 0. \label{3d:bound2}
		\end{align}
		Using the linear independence of $\hat r$, $ \hat{\theta} $ and $ \hat{\varphi} $, in  view of the coefficient of $ \hat{\varphi} $ and (\ref{3d:betam}),  we obtain that		
		\begin{equation}\label{3d:rel1}
			j_m(k_p)=k_pj'_m(k_p).
		\end{equation} 
		Similarly,  according to the coefficient  of $ \hat{r} $ in \eqref{3d:bound2}, by using (\ref{eq:k_p}), (\ref{jandJ}),(\ref{3d:betam}), (\ref{3d:rel1}) and the following property (cf.\cite{Abr})		\begin{subequations}
			\begin{align}
				j'_m(|\mathbf{x}|)+j_{m+1}(|\mathbf{x}|)-\frac{m}{|\mathbf{x}|}j_{m}(|\mathbf{x}|)&=0,\label{3d:func1}\\
				\label{3d:func2}
				|\mathbf{x}|^2j''_m(|\mathbf{x}|)+2|\mathbf{x}|j'_m(|\mathbf{x}|)+\left[|\mathbf{x}|^2-m(m+1)\right]j_m(|\mathbf{x}|)&=0,\\
				\label{3d:func3} \frac{d^2P_m^{|l|}}{d\theta^2}+\frac{\cos\theta}{\sin\theta}\frac{dP_m^{|l|}}{\theta}+\Big[m(m+1)-\frac{m^2}{\sin^2\theta}\Big]P_m^{|l|}&=0,
			\end{align}
		\end{subequations}
		after some calculations, we have 
		{\begin{equation}\label{3d:rel1.5}
				-4\mu k_pj'_m(k_p)j'_m(k)-k_p^2j_m(k_p)j'_m(k)+2\mu m(m+1)j_m(k_p)j'_m(k)+\delta\tau^2kj'_m(k_p)j_m(k)=0
			\end{equation} Now, we want to find a $k$ that satisfies both (\ref{3d:rel1}) and (\ref{3d:rel1.5}). Then, we substitute (\ref{3d:rel1}) into (\ref{3d:rel1.5}), one has 
			\begin{equation}\label{3d:rel2}
				\left[-4\mu-k_p^2+2\mu m(m+1) \right]j_m(k_p)j'_m(k)+\frac{\delta\tau^2 k}{k_p}j_m(k_p)j_m(k)=0
		\end{equation}
		Apparently, if $k$ solves the equations \eqref{3d:rel1} and \eqref{3d:rel1.5}, the equation \eqref{3d:rel2} holds. On the other hand, if $k$ is the root of the equations \eqref{3d:rel1} and \eqref{3d:rel2}, $k$  solves the equation \eqref{3d:rel1.5}. Thus, in the rest of the proof, we shall confine ourselves on finding the root $k$ of the equation \eqref{3d:rel2}. Then we will show that the root $k$ solves \eqref{3d:rel1} in the asymptotic sense and please refer to Remark \ref{3drem:as18}.}
	
		By using (\ref{jandJ}), we find that 
		the transmission eigenvalues $ k $'s to \eqref{eq:syst3} are positive zeros of the following function:
		\begin{equation}
			\begin{split}	\label{3d:rela1}
				f_{m+\frac{1}{2}}(k):=&\Big[\frac{4\mu}{k\sqrt{\tau}}+\tau^{\frac{3}{2}}-\frac{2\mu m(m+1)}{\sqrt{\tau}k}\Big]J_{m+\frac{3}{2}}(k)J_{m+\frac{1}{2}}(\tau k)\\
				&+\Big[-\frac{4\mu m}{k^2\sqrt{\tau}}-m\tau^{\frac{3}{2}}-\frac{2\mu m^2(m+1)}{\sqrt{\tau}k^2}+\delta\tau^{\frac{3}{2}}\Big]J_{m+\frac{1}{2}}(k)J_{m+\frac{1}{2}}(\tau k).
			\end{split}
		\end{equation}
		Nest, we compute the following quantity:
		\begin{equation}\begin{split}\label{3df}
				&f_{m+\frac{1}{2}}(j_{m+\frac{1}{2},1})f_{m+\frac{1}{2}}(j_{m+\frac{1}{2},2})\\
				=&\Big[\frac{4\mu}{j_{m+\frac{1}{2},1}\sqrt{\tau}}+\tau^{\frac{3}{2}}-\frac{2\mu m(m+1)}{\sqrt{\tau}j_{m+\frac{1}{2},1}}\Big]\Big[\frac{4\mu}{j_{m+\frac{1}{2},2}\sqrt{\tau}}+\tau^{\frac{3}{2}}-\frac{2\mu m(m+1)}{\sqrt{\tau}j_{m+\frac{1}{2},2}}\Big]\\
				&\times J_{m+\frac{1}{2}}(\tau j_{m+\frac{1}{2},1})J_{m+\frac{1}{2}}(\tau j_{m+\frac{1}{2},2})J_{m+\frac{3}{2}}(j_{m+\frac{1}{2},1})J_{m+\frac{3}{2}}(j_{m+\frac{1}{2},2})\\
			\end{split}
		\end{equation}	
   Nest, using (\ref{eq:jms up}) when $ m $ is sufficiently large, there hold that
	\begin{equation}\begin{split}\label{3dj_m}
			j_{m+\frac{1}{2},1}&=\left(m+\frac{1}{2}\right)\Big(1+C_1(m+\frac{1}{2})^{-2/3}+o((m+\frac{1}{2})^{-2/3})\Big),\\
			j_{m+\frac{1}{2},2}&=\left(m+\frac{1}{2}\right)\Big(1+C_2(m+\frac{1}{2})^{-2/3}+o((m+\frac{1}{2})^{-2/3})\Big),
		\end{split}
	\end{equation} where $ C_i,i=1,2 $ is a positive constant not depending on $m$.  By virtue of \eqref{3dj_m}, when $m$ is sufficient large, we have 
	{	\begin{equation}\begin{split}\label{3deq:1}
			&\Big[\frac{4\mu}{j_{m+\frac{1}{2},1}\sqrt{\tau}}+\tau^{\frac{3}{2}}-\frac{2\mu m(m+1)}{\sqrt{\tau}j_{m+\frac{1}{2},1}}\Big]\Big[\frac{4\mu}{j_{m+\frac{1}{2},2}\sqrt{\tau}}+\tau^{\frac{3}{2}}-\frac{2\mu m(m+1)}{\sqrt{\tau}j_{m+\frac{1}{2},2}}\Big]\\
			&=m\,\Pi_{i=1}^2\left[\frac{2\mu}{\sqrt{\tau}}(1-2m^{-2})(1-C_i(m+\frac{1}{2})^{-2/3}-o((m+\frac{1}{2})^{-2/3}))-\tau^{3/2} m^{-1} \right] >0
	\end{split}\end{equation}
	for any positive constants $\tau$ and $\mu$. 
	
			Moreover, from the choice of $\tau j_{m+\frac{1}{2},2}<j_{m+\frac{1}{2},1}$, the following inequality holds
			\begin{equation}
				J_{m+\frac{1}{2}}(\tau j_{m+\frac{1}{2},1})J_{m+\frac{1}{2}}(\tau j_{m+\frac{1}{2},2})>0.
			\end{equation}
			Since the positive zeros of $J_{m+\frac{3}{2}} $ are interlaced with those of $J_{m+\frac{1}{2}}$, we have 
			\[
			J_{m+\frac{3}{2}}(j_{m+\frac{1}{2},1})J_{m+\frac{3}{2}}(j_{m+\frac{1}{2},2})<0.
			\]
			By using the above fact, we derive that 
			\begin{equation}\begin{split}\label{3d:f}
				f_{m+\frac{1}{2}}(j_{m+\frac{1}{2},1})f_{m+\frac{1}{2}}(j_{m+\frac{1}{2},2})<0,
				\end{split}
			\end{equation}  which implies that $k_{m,s}\in \left(j_{m+\frac{1}{2},1},j_{m+\frac{1}{2},2}\right) $.
			
			
			
			
			
			
			Finally, by using  (\ref{3dj_m}) one has (\ref{3d:con1}) and (\ref{3d:con2}).}
		
		The proof is complete.	
	\end{proof}
		{\begin{rem}\label{3drem:as18}
			Here we remark that if $k_{m,s}$ is a root of \eqref{3d:rel2}, the root $k_{m,s}$ solves the equation \eqref{3d:rel1} in the asymptotic sense. 
			Indeed, if $k_{m,s}$ is a root of (\ref{3d:rel2}), for sufficiently large $m$ and $\tau<j_{m+\frac{1}{2},1}/j_{m+\frac{1}{2},2}<1$, using (\ref{asy1}), (\ref{jandJ}), (\ref{3d:con1}) and (\ref{3d:con2}) 
			we derive that
			\begin{equation}
					j_m(k_p)-k_pj'_m(k_p)=C\left(\frac{\tau e^{\sqrt{1-\tau^2}}}{1+\sqrt{1-\tau^2}}\right)^{m-\frac{1}{2}}\rightarrow 0 \quad \mbox{as} \quad m\rightarrow \infty,
			\end{equation} 
			where C is a constant. In the derivation of the last inequality, we have used the fact $0<\frac{\tau e^{\sqrt{1-\tau^2}}}{1+\sqrt{1-\tau^2}}<1$ due to $\tau<1$ . Thus, $k_{m,s}$ satisfies (\ref{3d:rel1}) in the asymptotic sense. 
		\end{rem}}

		
			

	We are in a position to consider the boundary-localization patterns of the AE transmission eigenfunctions associated with the eigenvalues determined in Lemma \ref{lemma2.2}.
	

	\begin{thm}\label{3d:th1}
		Consider (\ref{eq:syst3}) and let $ \Omega $ be the unit ball in $ \mathbb{R}^3 $ and $ \Omega_{\epsilon} $ be given in (\ref{eq:Omega_tau}). For any $ \epsilon\in(0,1) $, there exist transmission  eigenfunctions $ v_m $, $m\in\mathbb{N}$, associated with the transmission eigenvalues $ k_{m,s} $ described in Lemma \ref{lemma2.2}  such that 
		\begin{equation}\label{eq:3dth:sur1}
			\lim_{ m \to \infty}\frac{\|v_m\|_{L^2(\Omega_{\epsilon})}}{\|v_m\|_{L^2(\Omega)}}=0.
		\end{equation}
	\end{thm}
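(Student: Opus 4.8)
The plan is to mirror the proof of Theorem~\ref{th2.4}, reducing the three-dimensional estimate to the one already carried out there once the radial integrals are rewritten in terms of half-integer order Bessel functions. First I would take $v_m$ in the form given in \eqref{3deq:u,v}, namely $v_m(\mathbf{x})=\beta_m^l\,j_m(k_{m,s}|\mathbf{x}|)\,e^{\mathrm{i}m\varphi}P_m^{|l|}(\cos\theta)$ with $k_{m,s}$ satisfying \eqref{3d:kms}, and compute the two $L^2$-norms in spherical coordinates. Since $\Omega$ and $\Omega_\epsilon$ are concentric balls, the angular factor $\int_0^{2\pi}\!\!\int_0^\pi|P_m^{|l|}(\cos\theta)|^2\sin\theta\,d\theta\,d\varphi$ is common to numerator and denominator and cancels, leaving
\begin{equation*}
\frac{\|v_m\|_{L^2(\Omega_\epsilon)}^2}{\|v_m\|_{L^2(\Omega)}^2}=\frac{\int_0^\epsilon r^2 j_m^2(k_{m,s}r)\,dr}{\int_0^1 r^2 j_m^2(k_{m,s}r)\,dr}.
\end{equation*}
Substituting $j_m(t)=\sqrt{\pi/(2t)}\,J_{m+1/2}(t)$ from \eqref{jandJ}, one has $r^2 j_m^2(k_{m,s}r)=\tfrac{\pi}{2k_{m,s}}\,r\,J_{m+1/2}^2(k_{m,s}r)$, so the constant $\pi/(2k_{m,s})$ pulls out of both integrals and cancels and the ratio equals $\big(\int_0^\epsilon r\,J_{m+1/2}^2(k_{m,s}r)\,dr\big)/\big(\int_0^1 r\,J_{m+1/2}^2(k_{m,s}r)\,dr\big)$, which is exactly the quantity controlled in the proof of Theorem~\ref{th2.4} with the order $m$ replaced by $m+\tfrac12$.

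From this point the argument is the one used for Theorem~\ref{th2.4}. Using \eqref{eq:jm root} and \eqref{eq:jms up} one obtains $j'_{m+1/2,1}=(m+\tfrac12)(1+O((m+\tfrac12)^{-2/3}))$ as in \eqref{eq:jpm1}, and together with \eqref{3d:con2} this shows $k_{m,s}\epsilon<j'_{m+1/2,1}$ for $m$ large, so on $(0,\epsilon)$ the function $r\mapsto J_{m+1/2}(k_{m,s}r)$ is nonnegative and increasing, whence $\int_0^\epsilon r\,J_{m+1/2}^2(k_{m,s}r)\,dr\le\epsilon^2 J_{m+1/2}^2(k_{m,s}\epsilon)$; to this I would apply the large-order asymptotic \eqref{asy1}, which is legitimate because $k_{m,s}\epsilon/(m+\tfrac12)\to\epsilon<1$. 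For the denominator I would discard the integral over $(0,\Xi)$, where $\Xi:=j'_{m+1/2,1}/k_{m,s}\in(\epsilon,1)$ for $m$ large; on $(\Xi,1)$ the argument exceeds the order, so the oscillatory asymptotic \eqref{J_m}, after replacing $\cos^2$ by its mean and integrating $r\,(k_{m,s}^2 r^2-(m+\tfrac12)^2)^{-1/2}$ explicitly, yields a lower bound of the form $c\,k_{m,s}^{-2}\big(\sqrt{k_{m,s}^2-(m+\tfrac12)^2}-\sqrt{(j'_{m+1/2,1})^2-(m+\tfrac12)^2}\big)$, which by \eqref{3d:con2} and $k_{m,s}>j_{m+1/2,1}>j'_{m+1/2,1}$ is bounded below by a positive multiple of $(m+\tfrac12)^{-4/3}$.

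Combining the two bounds, the ratio is dominated by
\begin{equation*}
C\,m^{1/3}\,(1-\epsilon^2)^{-1/2}\left(\frac{\epsilon\,e^{\sqrt{1-\epsilon^2}}}{1+\sqrt{1-\epsilon^2}}\right)^{2m},
\end{equation*}
and since $0<\epsilon\,e^{\sqrt{1-\epsilon^2}}/(1+\sqrt{1-\epsilon^2})<1$ for every $\epsilon\in(0,1)$, letting $m\to\infty$ gives \eqref{eq:3dth:sur1}. I do not anticipate a genuine obstacle: the only point requiring care, compared with the two-dimensional proof, is keeping track of the half-integer order $m+\tfrac12$ throughout the Bessel asymptotics \eqref{asy1} and \eqref{J_m} and in the interlacing and monotonicity properties of $J_{m+1/2}$ and $J_{m+3/2}$, but once the reduction of the first paragraph is in place every estimate is identical in substance to the planar case.
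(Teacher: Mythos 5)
Your proposal is correct and follows essentially the same route as the paper's proof: both reduce the radial integrals via $j_m(t)=\sqrt{\pi/(2t)}\,J_{m+1/2}(t)$ to the planar quantities of order $m+\tfrac12$, bound the numerator by $\epsilon^2 J_{m+1/2}^2(k_{m,s}\epsilon)$ using monotonicity of $J_{m+1/2}$ up to $j'_{m+1/2,1}$ together with the uniform asymptotic \eqref{asy1}, and bound the denominator from below on $(\Xi_2,1)$ with $\Xi_2=j'_{m+1/2,1}/k_{m,s}$ via the oscillatory asymptotic, arriving at the same estimate \eqref{3d:compare1}. No gaps to report.
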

	\begin{proof}
		Recall that the transmission eigenvalue  $k_{m,s}$ fulfils  \eqref{3d:con2}. Let $v_m$ be given by (\ref{3deq:u,v}).  One has
		\begin{subequations}
			\begin{align}
				\label{3d:v_m1}
				\|v_m\|^2_{L^2(\Omega_{\epsilon})} &=|\beta_m^l|^2\int_{\Omega_{\epsilon}}|j_m(k_{m,s}|\mathbf{x}|)|^2 d\mathbf{x}=\frac{2\pi|\beta_m^l|^2}{k_{m,s}}\int_{0}^{\epsilon}rJ^2_{m+\frac{1}{2}}(k_{m,s}r) dr,\\
				\label{3d:v_m2}	\|v_m\|^2_{L^2(\Omega)}&=|\beta_m^l|^2\int_{\Omega}|j_m(k_{m,s}|\mathbf{x}|)|^2 d\mathbf{x}=\frac{2\pi|\beta_m^l|^2}{k_{m,s}}\int_{0}^{1}rJ^2_{m+\frac{1}{2}}(k_{m,s}r) dr.
			\end{align}
		\end{subequations}
		
		From the proof of Theorem \ref{th2.4}, we have $ J'_{m+\frac{1}{2}}(k_{m,s}r) >0 $ for $ 0<r<1 $. Hence, by using (\ref{asy1}),we can derive the following asymptotic bound:
		\begin{equation}
			\begin{split}\label{3d:vmeps1}
				&	\int_{0}^{\epsilon}rJ^2_{m+\frac{1}{2}}(k_{m,s}r) dr
				\leqslant \epsilon^2J^2_{m+\frac{1}{2}}(k_{m,s}\epsilon)\\
				=&\frac{\epsilon^2}{2\pi (m+\frac{1}{2})\left(1-(\frac{k_{m,s}\epsilon}{m+\frac{1}{2}})^2\right)^{1/2}}\left(\frac{k_{m,s}\epsilon}{m+\frac{1}{2}} \frac{e^{\sqrt{1-(\frac{k_{m,s}\epsilon}{m+\frac{1}{2}})^2}}}{1+\sqrt{1-(\frac{k_{m,s}\epsilon}{m+\frac{1}{2}})^2}} \right)^{2(m+\frac{1}{2})} \left(1+o(1)\right)^2
			\end{split}
		\end{equation}   
		as $m\rightarrow \infty$.  For  sufficiently large $ m $, one can choose $ \Xi_2 $ by
		\begin{equation}\label{3d:r2}
			\Xi_2=\frac{j'_{m+\frac{1}{2},1}}{k_{m,s}},
		\end{equation} 
		where $ \epsilon<\Xi_2<1 $. From \eqref{eq:jpm1}, we have $ j'_{m+\frac{1}{2},1}=(m+\frac{1}{2})\left(1+{O}((m+\frac{1}{2})^{-2/3})\right) $. Hence, one has $m+\frac{1}{2}<j'_{m+\frac{1}{2},1}=k_{m,s}\Xi_2 $. It yields that
		{	 \begin{align}\label{3d:vm1}
				\int_{0}^{1}rJ^2_{m+\frac{1}{2}}(k_{m,s}r) dr
				\geqslant&\int_{\Xi_2}^{1}rJ^2_{m+\frac{1}{2}}(k_{m,s}r) dr
				=\frac{2(1+o(1))^2}{\pi}\int_{\Xi_2}^{1}\frac{r\cos^2 x_{m+\frac{1}{2}}}{\left( k^2_{m,s}r^2-(m			+\frac{1}{2})^2\right)^{1/2}}dr\notag\\
				\sim&\frac{1}{\pi}\int_{\Xi_2}^{1}\frac{r}{\left( k^2_{m,s}r^2-(m+\frac{1}{2})^2\right)^{1/2}} dr\notag\\
				\sim&\frac{1}{\pi k^2_{m,s}}\Big(\sqrt{ k_{m,s}^2-(m+\frac{1}{2})^2}-\sqrt{j_{m+\frac{1}{2},1}'^2-(m+\frac{1}{2})^2}\Big).
			\end{align}
		where  $x_{m+\frac{1}{2}}=\sqrt{{(\tau k_{m,s}r)}^2-(m+\frac{1}{2})^2}-\frac{(m+\frac{1}{2})\pi}{2}+(m+\frac{1}{2})\arcsin(\frac{m+\frac{1}{2}}{\tau k_{m,s}r})-\frac{\pi}{4} $. 	
		
		Therefore, combining (\ref{3d:v_m1}), (\ref{3d:v_m2}), (\ref{3d:vmeps1}) and (\ref{3d:vm1}), there holds
		\begin{equation}\begin{split}\label{3d:compare1}
				\frac{\|v_m\|^2_{L^2(\Omega_{\epsilon})}}{\|v_m\|^2_{L^2(\Omega)}}\leqslant&C(m+\frac{1}{2})^{\frac{1}{3}}\left(1-\epsilon^2\right)^{-\frac{1}{2}}\left(\frac{\epsilon e^{\sqrt{1-\epsilon^2}}}{1+\sqrt{1-\epsilon^2}} \right)^{2(m+\frac{1}{2})} 
		\end{split}\end{equation} 
		where $C$ is a constant. 
		By letting $ m\to\infty $ one has (\ref{eq:3dth:sur1})}
\end{proof}

\begin{thm}\label{3d:th2}
	Consider (\ref{eq:syst3}) and let $ \Omega $ be the unit ball in $ \mathbb{R}^3 $ and $ \Omega_{\epsilon} $ be given in (\ref{eq:Omega_tau}). Let the physical parameter $ \tau $ in \eqref{eq:syst3} satisfy $ \tau \in (0, 1) $. For any $ \epsilon\in(0,1) $, there exists transmission eigenfunctions $ \mathbf{u}_m $, $m\in\mathbb{N}$, associated with the transmission eigenvalues $ k_{m,s} $ described in Lemma \ref{lemma2.2} such that 
	\begin{equation}\label{eq:3dth:sur4}
		\lim_{ m \to \infty}\frac{\|\mathbf{u}_m\|_{L^2(\Omega_{\epsilon})}}{\|\mathbf{u}_m\|_{L^2(\Omega)}}=0.
	\end{equation}
\end{thm}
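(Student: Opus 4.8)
The plan is to mirror the two--dimensional argument in the proof of Theorem~\ref{th2.5} and the treatment of the acoustic component $v_m$ in the proof of Theorem~\ref{3d:th1}, systematically replacing the index $m$ by $m+\frac{1}{2}$. The eigenfunction $\mathbf{u}_m$ is the pure--pressure mode \eqref{3deq:u,v}, which involves only the coefficient $\alpha_m^l$ and only the spherical Bessel functions $j_m(k_p|\mathbf{x}|)$, $j'_m(k_p|\mathbf{x}|)$ with $k_p=\tau k_{m,s}$; in particular there is no $k_s$--part to control. First I would use the orthonormality of $\hat r,\hat\theta,\hat\varphi$, the fact that $|e^{\mathrm{i}m\varphi}|=1$, and the identity $\int_0^\pi\bigl(|\partial_\theta P^{|l|}_m|^2+\tfrac{m^2}{\sin^2\theta}|P^{|l|}_m|^2\bigr)\sin\theta\,d\theta=m(m+1)\int_0^\pi|P^{|l|}_m|^2\sin\theta\,d\theta$ (which follows from \eqref{3d:func3} by an integration by parts) to reduce both norms to a single scalar radial integral: for $D\in\{\Omega,\Omega_\epsilon\}$ with $\rho_\Omega=1$ and $\rho_{\Omega_\epsilon}=\epsilon$,
\begin{equation*}
\|\mathbf{u}_m\|_{L^2(D)}^2=c_{m,l}\int_0^{\rho_D}\Bigl(j'_m(k_p r)^2+\frac{m(m+1)}{k_p^2 r^2}\,j_m(k_p r)^2\Bigr)r^2\,dr,
\end{equation*}
where $c_{m,l}>0$ depends only on $m,l$ and therefore cancels in the quotient \eqref{eq:3dth:sur4}. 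Via \eqref{jandJ} one rewrites the integrand in terms of $J_{m+1/2}(k_p r)$ and $J'_{m+1/2}(k_p r)$; the substitution introduces only powers of $k_p r$, i.e.\ polynomial--in--$m$ prefactors, so the integrals take exactly the shape of those in the proof of Theorem~\ref{th2.5} with $m\mapsto m+\frac{1}{2}$.

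The one place where the hypothesis $\tau\in(0,1)$ is used is this: \eqref{3d:con2} gives $k_{m,s}/(m+\frac{1}{2})\to 1$, so $\tau k_{m,s}r/(m+\frac{1}{2})<1$ for \emph{every} $r\in[0,1]$ once $m$ is large. Hence the argument $\tau k_{m,s}r$ never reaches the order $m+\frac{1}{2}$, so $J_{m+1/2}(\tau k_{m,s}r)$ stays in the pre--turning--point (monotone, exponentially growing) regime throughout $\Omega$; in particular $J'_{m+1/2}(\tau k_{m,s}r)>0$ on $(0,1)$, just as in the proof of Theorem~\ref{th2.5}. To bound the numerator I would then invoke the uniform estimates \eqref{J'm} and \eqref{Jm(mx)} (with $m$ replaced by $m+\frac{1}{2}$) on $[0,\epsilon]$, together with the monotonicity of $\phi$ from \eqref{eq:phi def}; a mean--value argument yields
\begin{equation*}
\int_0^{\epsilon}\Bigl(j'_m(k_p r)^2+\tfrac{m(m+1)}{k_p^2 r^2}j_m(k_p r)^2\Bigr)r^2\,dr\le C\,\epsilon\,(m+\tfrac{1}{2})^{\alpha}\,\phi\Bigl(\tfrac{\tau k_{m,s}\eta_1}{m+1/2}\Bigr)^{2(m+1/2)-1}
\end{equation*}
for some $\eta_1\in[0,\epsilon]$ and a fixed exponent $\alpha$.

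For the denominator I would put $\Xi_2=j'_{m+\frac{1}{2},1}/k_{m,s}$ as in \eqref{3d:r2}; since $j'_{m+\frac{1}{2},1}<j_{m+\frac{1}{2},1}<k_{m,s}$ by \eqref{3d:kms} and $j'_{m+\frac{1}{2},1}/(m+\frac{1}{2})\to1$ by \eqref{eq:jpm1} (applied with $m\mapsto m+\frac{1}{2}$), one has $\epsilon<\Xi_2<1$ for $m$ large and $\Xi_2\to1$. Dropping the nonnegative $j'_m$--term and applying the pre--turning--point asymptotic \eqref{asy1} (through \eqref{jandJ}) on $[\Xi_2,1]$ produces a lower bound
\begin{equation*}
\int_0^{1}\Bigl(j'_m(k_p r)^2+\tfrac{m(m+1)}{k_p^2 r^2}j_m(k_p r)^2\Bigr)r^2\,dr\ge c\,(m+\tfrac{1}{2})^{-\beta}\,(1-\Xi_2)\,\phi\Bigl(\tfrac{\tau k_{m,s}\eta_2}{m+1/2}\Bigr)^{2(m+1/2)-1}
\end{equation*}
for some $\eta_2\in[\Xi_2,1]$ with $\eta_2\to1$. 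Combining the two estimates, the quotient in \eqref{eq:3dth:sur4} is at most $C(m+\tfrac{1}{2})^{\gamma}\bigl(\phi(\tfrac{\tau k_{m,s}\eta_1}{m+1/2})/\phi(\tfrac{\tau k_{m,s}\eta_2}{m+1/2})\bigr)^{2(m+1/2)-1}$; since $\tfrac{\tau k_{m,s}\eta_2}{m+1/2}-\tfrac{\tau k_{m,s}\eta_1}{m+1/2}\to\tau(1-\eta_1)\ge\tau(1-\epsilon)>0$ and $\phi$ is strictly increasing on $(0,1)$, this ratio is $\le 1-\delta(\epsilon)<1$ for $m$ large, and letting $m\to\infty$ gives \eqref{eq:3dth:sur4}.

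The asymptotic bookkeeping is routine, being essentially a line--by--line transcription of the planar proof. I expect the two real obstacles to be: (i) the initial reduction to the scalar radial integral --- one must disentangle the vector spherical harmonic structure, verify the Dirichlet--energy identity for $P^{|l|}_m$, and check that after passing from $j_m,j'_m$ to $J_{m+1/2},J'_{m+1/2}$ the two radial terms (including the cross term coming from $j'_m$) acquire matching polynomial prefactors so they can be compared against a single factor $\phi(\cdot)^{2(m+1/2)}$; and (ii) justifying that the mean--value point $\eta_2$ in the lower bound converges to $1$, which is precisely what forces the exponential gap $\phi(\tfrac{\tau k_{m,s}\eta_1}{m+1/2})\le(1-\delta)\phi(\tfrac{\tau k_{m,s}\eta_2}{m+1/2})$ and hence the claimed decay.
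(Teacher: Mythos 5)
Your proposal follows essentially the same route as the paper's proof: reduce both norms to radial integrals of $j_m'(k_pr)^2$ and $j_m(k_pr)^2/r^2$ over the angular harmonics, pass to half-integer Bessel functions via \eqref{jandJ}, bound the numerator on $[0,\epsilon]$ with the uniform estimates \eqref{J'm}--\eqref{Jm(mx)} and a mean-value point, bound the denominator from below on $[\Xi_2,1]$ with $\Xi_2=j'_{m+\frac12,1}/k_{m,s}$, and close the argument with the monotone auxiliary function $\phi$ of \eqref{eq:phi def} exactly as in \eqref{compare2}. The only cosmetic difference is that you handle the angular integrals via the Dirichlet-energy identity for $P^{|l|}_m$ rather than the recurrences \eqref{P1}--\eqref{P3} used in the paper; this is an equivalent computation and the proof is correct.
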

\begin{proof}
	Suppose that the transmission eigenvalue  $k_{m,s}$ fulfils  \eqref{3d:con2}. Let the transmission eigenfunction $\mathbf u_m$ of \eqref{eq:syst3} be associated with  $k_{m,s}$ in (\ref{3deq:u,v}).  By using (\ref{jandJ}), (\ref{3d:func1}) and the following properties (cf. \cite{Abr}):
	\begin{align}
		\label{P1}
		\frac{dP_m^{|l|}(\cos\theta)}{d\theta}&=\frac{1}{2}\left[(m+|l|)(m-|l|+1)P_{m}^{|l|-1}(\cos\theta)-P_{m}^{|l|+1}(\cos\theta)\right],\\
		\label{P2}
		\frac{|m|}{\sin\theta}P_m^{|l|}(\cos\theta){d\theta}&=-\frac{1}{2}\left[P_{m-1}^{|l|+1}(\cos\theta)+(m+|l|-1)(m+|l|)P_{m-1}^{|l|-1}(\cos\theta)\right],\\
		\label{P3}
		\int_{-1}^{1}P_m^{l}(x)P_m^{k}(x)&=\begin{cases}
			0, &l\ne k\\
			\frac{(m+|l|)!}{(m-|l|)!}\frac{2}{2m+1},&l=k
		\end{cases},
	\end{align}
	and similar to \eqref{3d:vmeps1}, one can show that 
	\begin{align}\label{3d:u}
		\|\bmf{u}_m\|^2_{L^2(\Omega_\epsilon)}
		&=\int_{\Omega_\epsilon}\Bigg( |\alpha_m^l|^2|j'_m(k_p|\bmf{x}|)|^2|P_m^{|l|}|^2+|\alpha_m^l|^2\frac{|j_m(k_p|\bmf{x}|)|^2}{k_p^2|\bmf{x}|^2}(\frac{dP_m^{|l|}}{d\theta})^2\notag \\
		&\quad +|\alpha_m^l|^2\frac{|j_m(k_p|\bmf{x}|)|^2}{k_p^2|\bmf{x}|^2}\frac{m^2}{\sin^2\theta}|P_m^{|l|}|^2 \Bigg) d\bmf{x}\notag \\
		\leqslant&\frac{|\alpha_m^l|^2}{k_{m,s}}\frac{(m+|l|)!}{(m-|l|)!(m+\frac{1}{2})}\frac{\epsilon}{2\pi(m+\frac{1}{2})\sqrt{1-(\frac{\tau k_{m,s}\eta_3}{m+\frac{1}{2}})^2}}\\
		&\times \Biggl[\frac{\eta_3}{\tau }\Biggr(\frac{\tau k_{m,s}\eta_3}{m+\frac{1}{2}}\frac{e^{\sqrt{1-(\frac{\tau k_{m,s}\eta_3}{m+\frac{1}{2}})^2}}}{1+\sqrt{1-(\frac{\tau k_{m,s}\eta_3}{m+\frac{1}{2}}})^2}\Biggr)^3\notag \\
		&+\frac{Cm^2}{\tau^3 k_{m,s}^2}\Biggr(\frac{\tau k_{m,s}}{m+\frac{1}{2}}\frac{e^{\sqrt{1-(\frac{\tau k_{m,s}\eta_3}{m+\frac{1}{2}})^2}}}{1+\sqrt{1-(\frac{\tau k_{m,s}\eta_3}{m+\frac{1}{2}}})^2}\Biggr)+\frac{Cm^2\eta_3}{\tau^3 k_{m,s}^2}  \notag \\
		&\times  \Biggr(\frac{\tau k_{m,s}\eta_3}{m+\frac{1}{2}}\frac{e^{\sqrt{1-(\frac{\tau k_{m,s}\eta_3}{m+\frac{1}{2}})^2}}}{1+\sqrt{1-(\frac{\tau k_{m,s}\eta_3}{m+\frac{1}{2}}})^2}\Biggr) \Biggr]  \Biggr(\frac{\tau k_{m,s}\eta_3}{m+\frac{1}{2}}\frac{e^{\sqrt{1-(\frac{\tau k_{m,s}\eta_3}{m+\frac{1}{2}})^2}}}{1+\sqrt{1-(\frac{\tau k_{m,s}\eta_3}{m+\frac{1}{2}}})^2}\Biggr)^{2m}(1+o(1))^2 ,\notag
	\end{align}
	where $ \eta_3\in[0,\epsilon] $. 
	
	
	When $\tau < 1$. Recall that $\Xi_2$ is given in \eqref{3d:r2}.  Adopting a similar argument for deriving \eqref{3d:vm1},  one has 
	\begin{align}
		\label{3d:u_eps}
		\|\bmf{u}_m\|^2_{L^2(\Omega)}&\geqslant\frac{|\alpha_m^l|^2}{k_{m,s}}\frac{(m+|l|)!}{(m-|l|)!(m+\frac{1}{2})}\frac{1-\Xi_2}{2\pi(m+\frac{1}{2})\sqrt{1-(\frac{\tau k_{m,s}\eta_4}{m+\frac{1}{2}})^2}} \\
		&\times  \Biggl[\frac{\eta_4}{\tau }\Biggr(\frac{\tau k_{m,s}\eta_4}{m+\frac{1}{2}}\frac{e^{\sqrt{1-(\frac{\tau k_{m,s}\eta_4}{m+\frac{1}{2}})^2}}}{1+\sqrt{1-(\frac{\tau k_{m,s}\eta_4}{m+\frac{1}{2}}})^2}\Biggr)^3 +\frac{Cm^2}{\tau^3 k_{m,s}^2}\Biggr(\frac{\tau k_{m,s}}{m+\frac{1}{2}}\frac{e^{\sqrt{1-(\frac{\tau k_{m,s}\eta_4}{m+\frac{1}{2}})^2}}}{1+\sqrt{1-(\frac{\tau k_{m,s}\eta_4}{m+\frac{1}{2}}})^2}\Biggr) \notag \\
		& +\frac{Cm^2\eta_4}{\tau^3 k_{m,s}^2}\Biggr(\frac{\tau k_{m,s}\eta_4}{m+\frac{1}{2}}\frac{e^{\sqrt{1-(\frac{\tau k_{m,s}\eta_4}{m+\frac{1}{2}})^2}}}{1+\sqrt{1-(\frac{\tau k_{m,s}\eta_4}{m+\frac{1}{2}}})^2}\Biggr)-\frac{2m}{\tau^2 k^2_{m,s}}\notag  \\
		& \times \Biggr(\frac{\tau k_{m,s}\eta_4}{m+\frac{1}{2}}\frac{e^{\sqrt{1-(\frac{\tau k_{m,s}\eta_4}{m+\frac{1}{2}})^2}}}{1+\sqrt{1-(\frac{\tau k_{m,s}\eta_4}{m+\frac{1}{2}}})^2}\Biggr)^2\Biggr]   \Biggr(\frac{\tau k_{m,s}\eta_4}{m+\frac{1}{2}}\frac{e^{\sqrt{1-(\frac{\tau k_{m,s}\eta_4}{m+\frac{1}{2}})^2}}}{1+\sqrt{1-(\frac{\tau k_{m,s}\eta_4}{m+\frac{1}{2}}})^2}\Biggr)^{2m}(1+o(1))^2 , \notag	
	\end{align}
	where $ \eta_4\in[\Xi_2,1] $ and $ \eta_4\to 1 $ as $m\to\infty  $.
	Furthermore, by using a similar  argument for \eqref{compare2},  there exists $ 0<\delta(\eta_3) <1$ such that 
	\begin{equation}\label{3d:compare2}
		\frac{\|\mathbf{u}_m\|_{L^2(\Omega_\epsilon)}}{\|\mathbf{u}_m\|_{L^2(\Omega)}}\leqslant C(m+\frac{1}{2})^{\frac{2}{3}}(1-\delta(\eta_3))^{2m},
	\end{equation}
	where $C$ is positive constant independent of $m$. Therefore, we can   prove (\ref{eq:3dth:sur4}) by letting $m\rightarrow \infty$ in \eqref{3d:compare2}.
	The proof is complete.
\end{proof}

\subsection{Numerical results}

We have conducted extensive numerical experiments to verify that the boundary-localization phenomenon of the AE transmission eigenfunctions holds for general domains of $\Omega$: radial/non-radial, smooth/non-smooth and convex/non-convex. {\color{blue} Here, we apply the finite element method for the numerical experiments. The system \eqref{eq:syst2} is first reformulated as a variational problem and the boundary conditions are the third and the forth conditions in \eqref{eq:syst2}. The mesh is chosen to be the triangular and the piecewise linear basis functions are used in the computations.} In what follows, we present a few representative examples for illustration; see Fig.~\ref{fig:1} and \ref{fig:2}.

\begin{figure}[htbp]
	\subfigure[]{
		\includegraphics[width=0.22\textwidth]{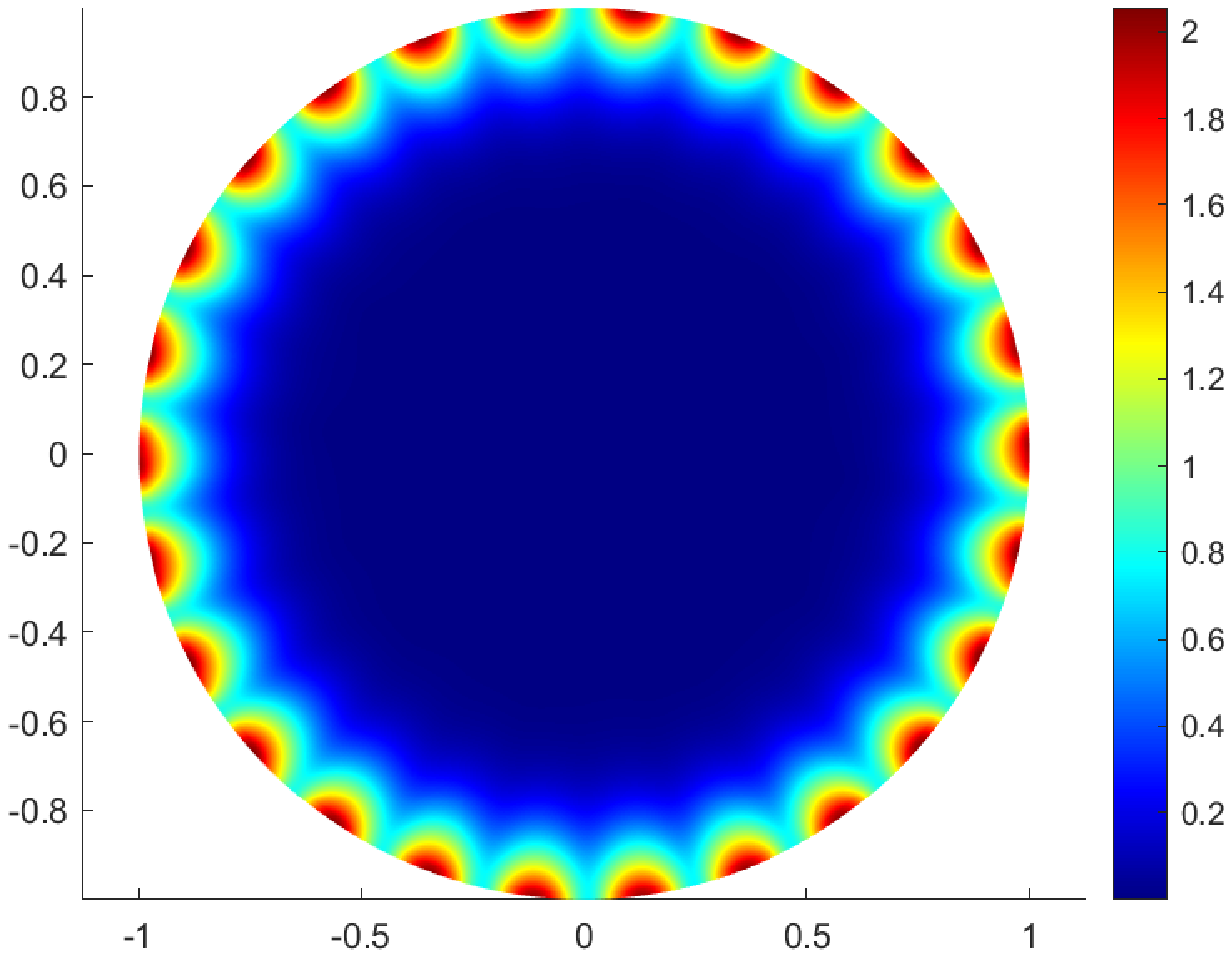}}
	\hspace{.1cm}
	\subfigure[]{
		\includegraphics[width=0.22\textwidth]{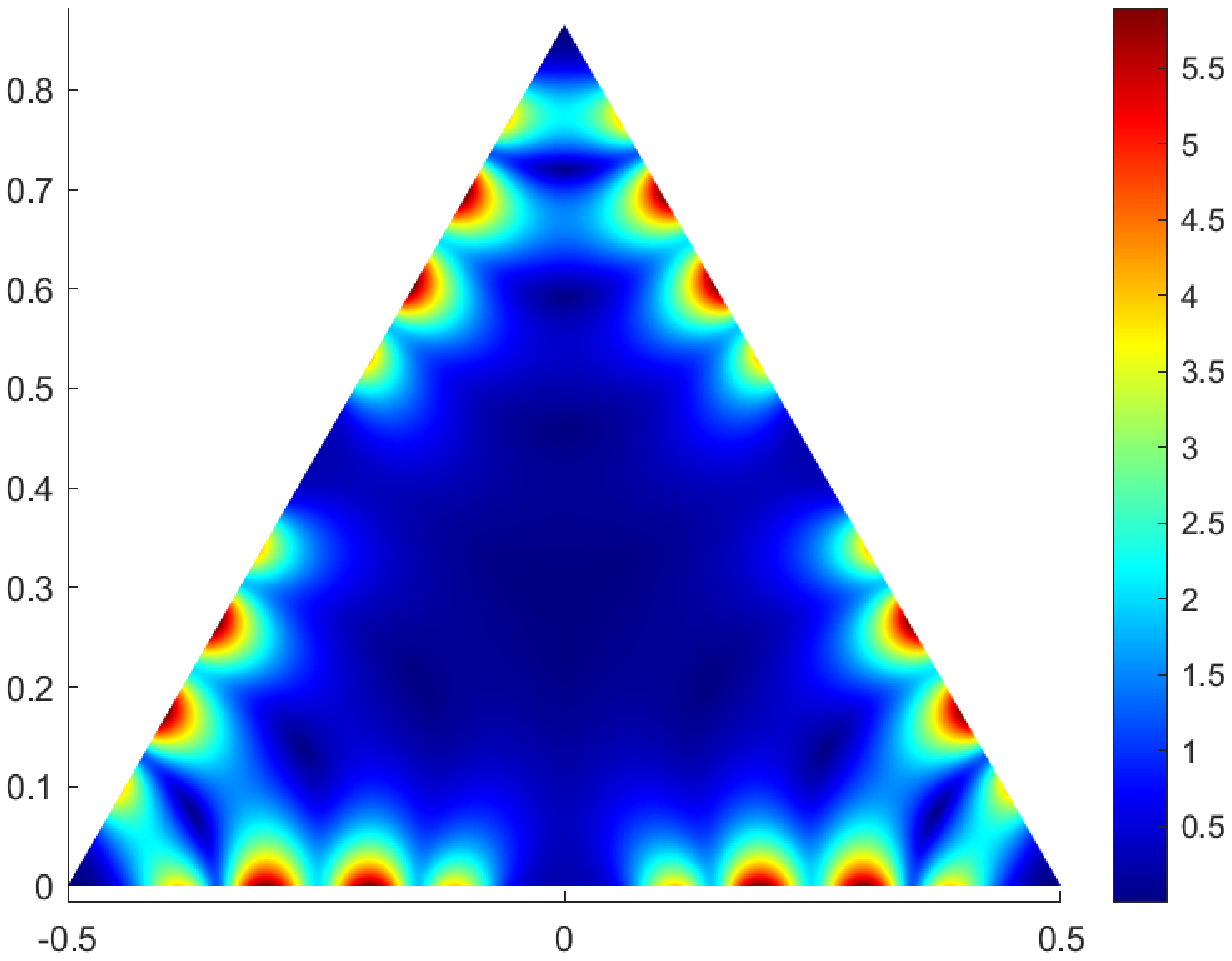}}
	\hspace{.1cm}
	\subfigure[]{
		\includegraphics[width=0.22\textwidth]{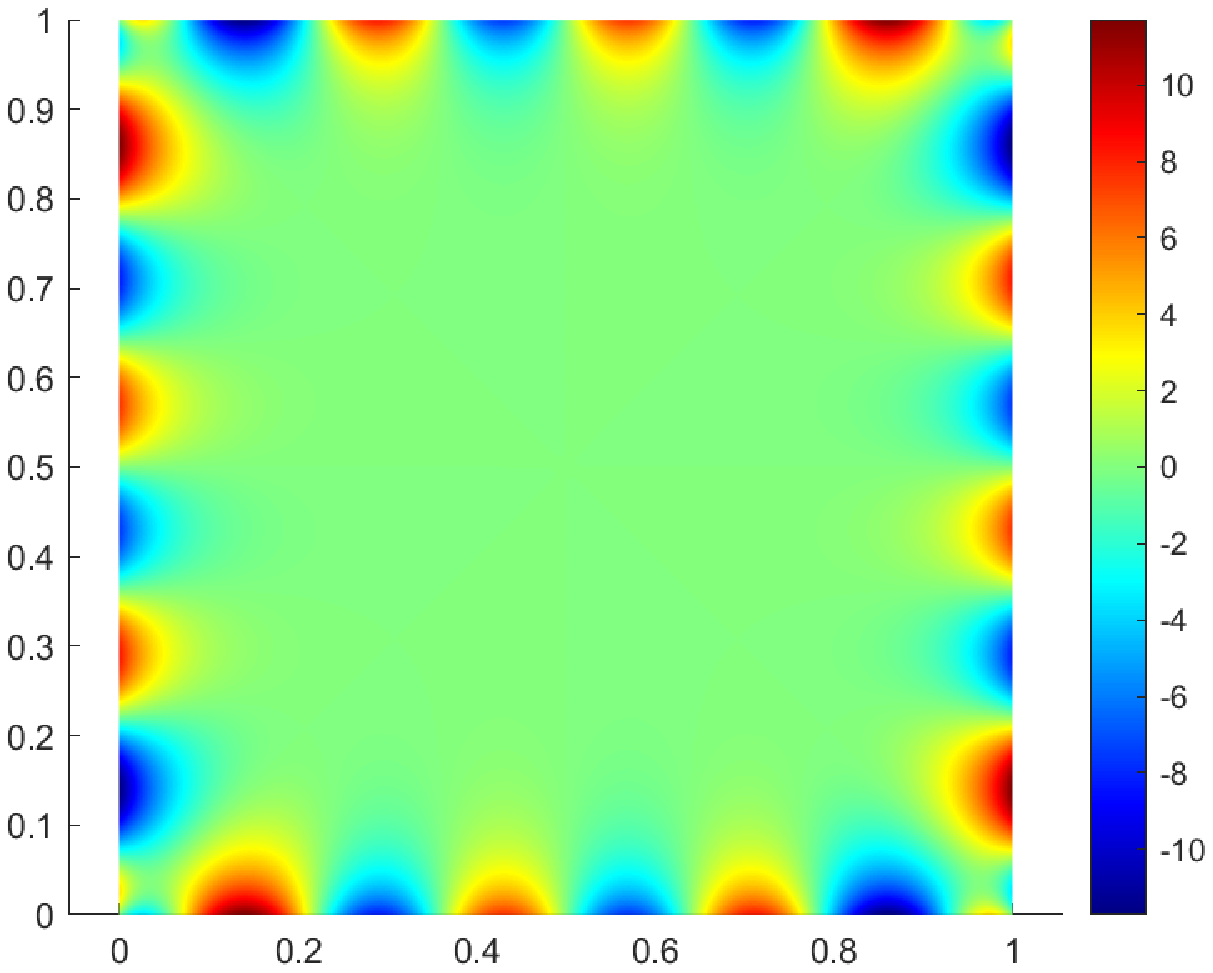}}
	\hspace{.1cm}
	\subfigure[]{
		\includegraphics[width=0.22\textwidth]{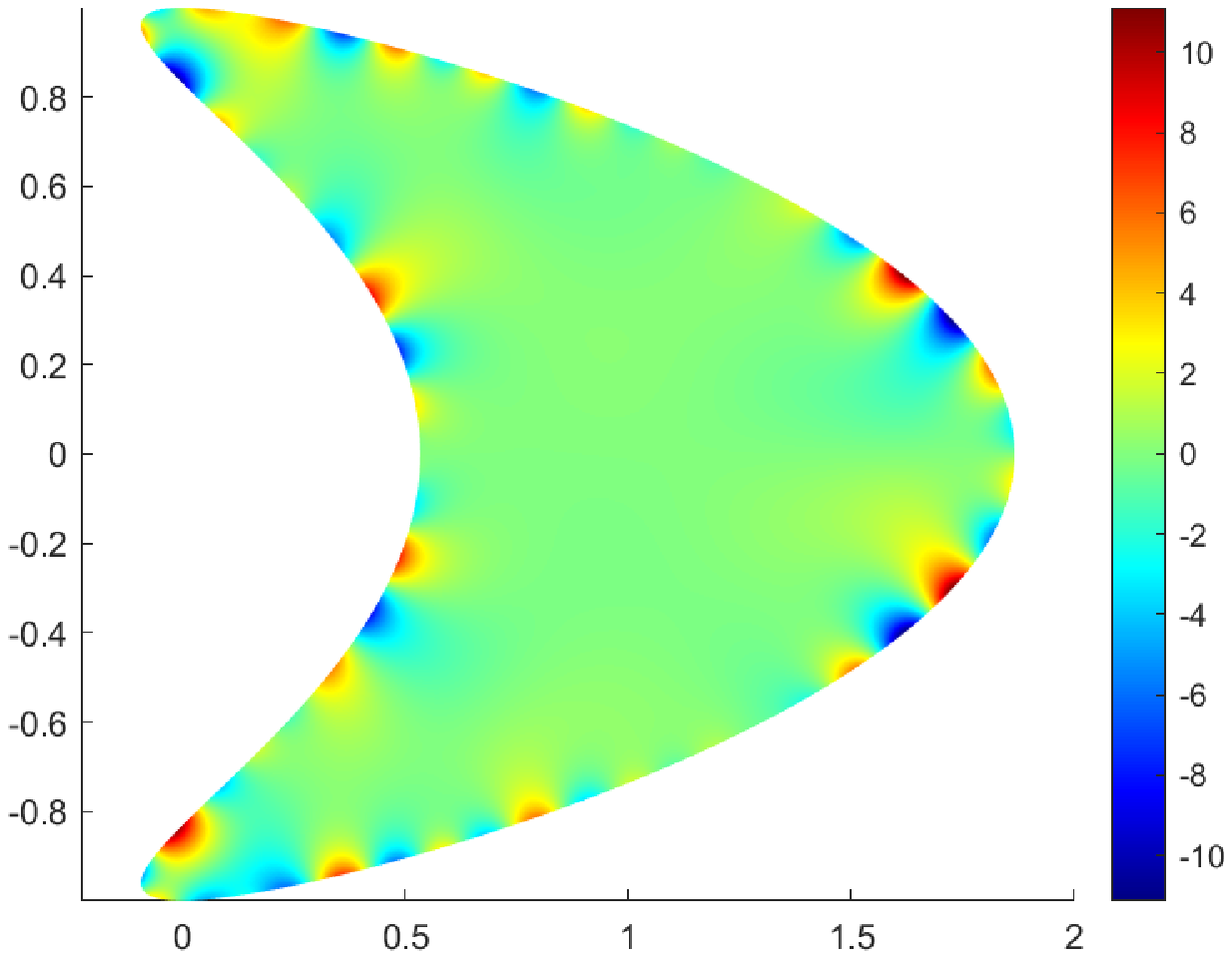}}
	\caption{(a) \& (b): $|\mathbf{u}|$ to \eqref{eq:syst2} associated with $\rho_b=20$, $\kappa=\rho_e=\tilde\lambda=\tilde\mu=1$ and $w^2=43.27, 201.52$, respectively; (c) \& (d): $v$ to \eqref{eq:syst2} associated with $\rho_e=10$, $\kappa=\rho_b=\tilde\lambda=\tilde\mu=1$ and $w^2=93.71, 101.35$, respectively. }
	\label{fig:1}
\end{figure}
\begin{figure}[htbp]
	\subfigure[]{
		\includegraphics[width=0.22\textwidth]{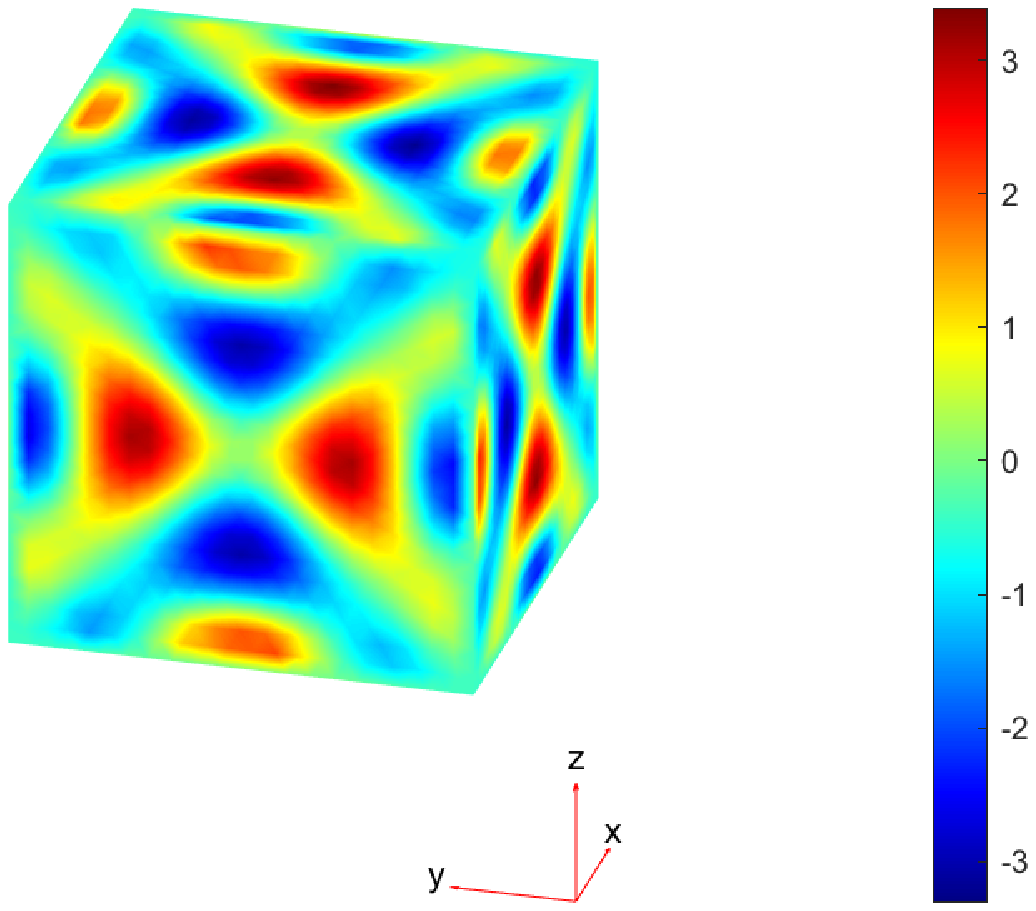}
		\includegraphics[width=0.22\textwidth]{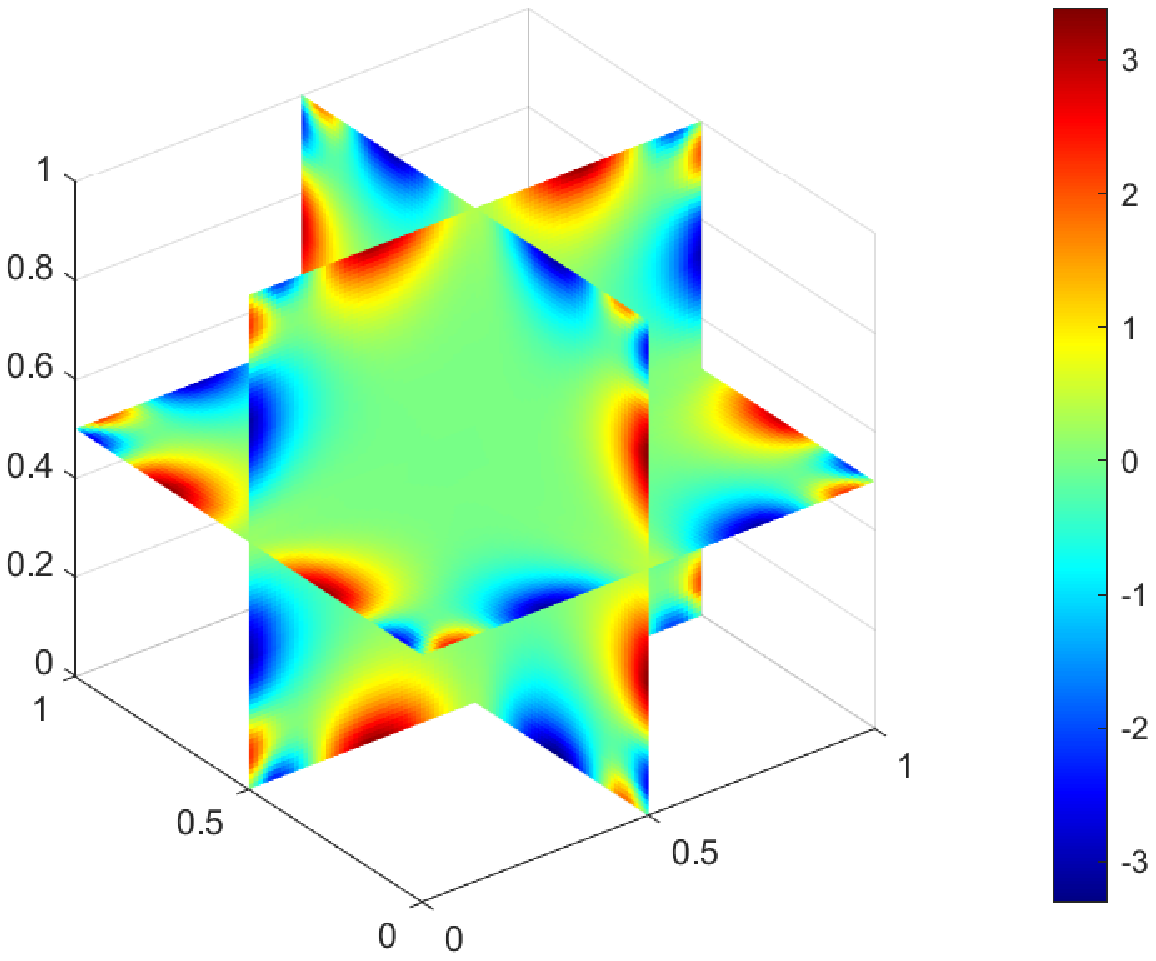}
	}
	\hspace{.1cm}
	\subfigure[]{
		\includegraphics[width=0.22\textwidth]{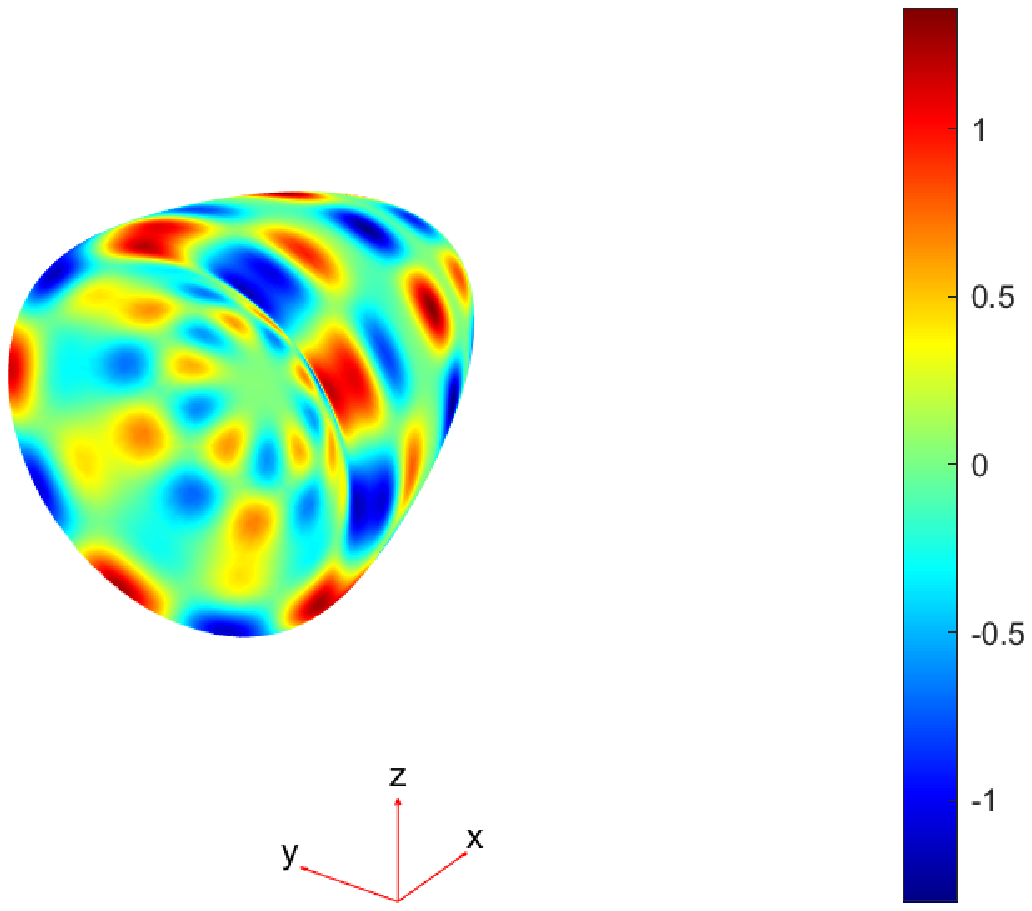}
		\includegraphics[width=0.22\textwidth]{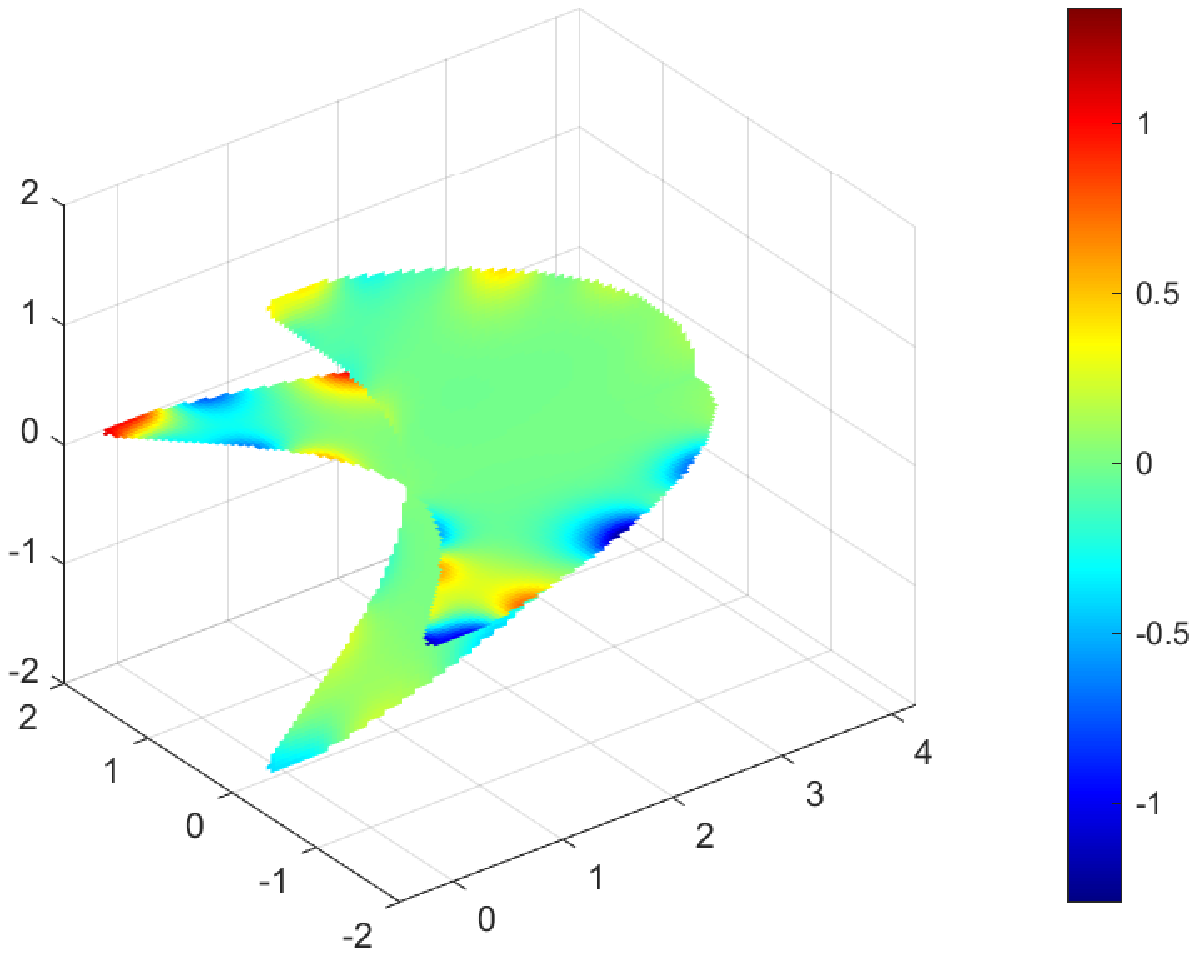}
	}
	\caption{(a) \& (b): $v$ to \eqref{eq:syst2} associated with $\rho_e=10$, $\kappa=\rho_b=\tilde\lambda=\tilde\mu=1$ and $w^2=15.0140, 5.9875$, respectively. }
	\label{fig:2}
\end{figure}



\section*{Acknowledgment}
The work of H. Diao is supported by a startup fund from National Key R\&D Program of China (No. 2020YFA0714102) and NSFC/RGC Joint Research Grant No. 12161160314. The work of H. Li was supported by  Direct Grant for Research, CUHK (project 4053518).  The work of H. Liu is supported by the Hong Kong RGC General Research Funds (projects 11311122, 11300821 and 12301420),  the NSFC/RGC Joint Research Fund (project N\_CityU101/21), and the ANR/RGC Joint Research Grant, A\_CityU203/19.

\end{document}